\newcommand*\xbar[1]{%
  \hbox{%
    \vbox{%
      \hrule height 0.5pt 
      \kern0.5ex
      \hbox{%
        \kern-0.1em
        \ensuremath{#1}%
        \kern-0.1em
      }%
    }%
  }%
}
\def\temp{&} \catcode`&=\active \let&=\temp
\newtheorem{thm}{Theorem}[section]
\newtheorem{cor}[thm]{Corollary}
\newtheorem{defn}[thm]{Definition}
\newtheorem{lemma}[thm]{Lemma}
\newtheorem*{lemma*}{Lemma}
\newtheorem*{thm*}{Theorem}
\newtheorem{prop}[thm]{Proposition}
\theoremstyle{definition}
\newtheorem{eg}[thm]{Example}
\newtheorem{defn-rmk}[thm]{Definition-Remark}
\theoremstyle{remark}
\newtheorem{rmk}[thm]{Remark}
\newtheorem{notation}[thm]{Notation}
\newtheorem{question}[thm]{Question}
\newcommand{\Exterior}{\mathchoice{{\textstyle\bigwedge}}%
    {{\bigwedge}}%
    {{\textstyle\wedge}}%
    {{\scriptstyle\wedge}}}
\DeclareMathOperator{\wid}{Width}
\DeclareMathOperator{\width}{W}
\DeclareMathOperator{\Tor}{Tor}
\DeclareMathOperator{\Ann}{Ann}
\DeclareMathOperator{\Ass}{Ass}
\DeclareMathOperator{\ch}{char}
\DeclareMathOperator{\grade}{grade}
\DeclareMathOperator{\Supp}{Supp}
\DeclareMathOperator{\pd}{pd}
\DeclareMathOperator{\End}{End}
\DeclareMathOperator{\Ext}{Ext}
\DeclareMathOperator{\Hom}{Hom}
\DeclareMathOperator{\supph}{Suppi}
\renewcommand{\min}{\mbox{\rm min}}
\DeclareMathOperator{\im}{Im}
\DeclareMathOperator{\modr}{mod(\textit{R})}
\DeclareMathOperator{\tildes}{\underaccent{\tilde}{\it{s}}}
\DeclareMathOperator{\tildef}{\underaccent{\tilde}{\it{f}}}
\DeclareMathOperator{\tildei}{\underaccent{\tilde}{\it{i}}}
\DeclareMathOperator{\tildej}{\underaccent{\tilde}{\it{j}}}
\newcommand{\ei}{\mbox{$e_{\underaccent{\tilde}{i}}$}}
\newcommand{\ej}{\mbox{$e_{\underaccent{\tilde}{j}}$}}
\renewcommand{\P}{\mbox{$\mathcal{P}_{m}^{d}$}}
\newcommand{\Pm}{\mbox{$\mathcal{P}_{m-1}^{d}$}}
\newcommand{\Pmm}{\mbox{$\mathcal{P}_{m+1}^{d}$}}
\newcommand{\CA}{\mbox{$\mathcal{A}$}}
\newcommand{\CB}{\mbox{$\mathcal{B}$}}
\newcommand{\T}{\mbox{$\mathcal{T}$}}
\newcommand{\CP}{\mbox{$\mathcal{P}$}}
\newcommand{\cp}{\mbox{$\mathfrak{p}$}}
\newcommand{\m}{\mbox{$\mathfrak{m}$}}
\renewcommand{\L}{\mbox{$\mathscr{L}$}}
\newcommand{\DbLP}{\mbox{${D^b_{\tiny \L}}(\CP)$}}
\newcommand{\ChbLT}{\mbox{${Ch^b_{\tiny \L}}(\CT)$}}
\newcommand{\DbLT}{\mbox{${D^b_{\tiny \L}}(\T)$}}
\newcommand{\BN}{\mbox{$\mathbb N$}}\newcommand{\BZ}{\mbox{$\mathbb Z$}}
\newcommand{\CT}{\mbox{$\mathcal{T}$}}
\begin{document}

\pagenumbering{arabic}

\title{Derived equivalences via Tate resolutions}
\date{}
\author{K. Ganapathy and Sarang Sane}

\address{K. Ganapathy\\
Department of Mathematics \\
Indian Institute of Technology Madras\\
Sardar Patel Road \\
Chennai India 600036}
\email{ganapathy.math@gmail.com}
\address{Sarang Sane*\\
Department of Mathematics \\
Indian Institute of Technology Madras\\
Sardar Patel Road \\
Chennai India 600036}
\email{sarangsanemath@gmail.com}

\keywords{Artin-Rees property, derived category, eventually finite projective dimension, Tate resolution, Koszul complex}
\subjclass[2020]{Primary: 13D02, 13D09; Secondary:13C13, 13E05}
\begin{abstract}
  For any finite sequence of elements $s_1, \ldots , s_d$ in a commutative noetherian ring $R$, we show that for $n \gg 0$, the natural map from the Koszul complex $K(s_1^n, \ldots , s_d^n)$ to the Koszul complex $K(s_1, \ldots , s_d)$ factors through the Tate resolution on $s_1^n, \ldots , s_d^n$. Using this, for any resolving subcategory $\CA$ of $\modr$ and any ideal $I$ such that it has a filtration $\{ I_n \}$ which is equivalent to the $I$-adic filtration and $\textrm{dim}_{\tiny \CA}(R/I_n) < \infty$, we show a derived equivalence between the bounded derived category of finitely generated modules supported on $V(I)$ having finite $\CA$-dimension and the bounded derived category of $\CA$ with homologies supported on $V(I)$. As a special case, when $R$ is of prime characteristic and $I$ is of finite projective dimension, we obtain a derived equivalence between the bounded derived category of finite projective dimension modules supported on $V(I)$ and the bounded derived category of projective modules with homologies supported on $V(I)$.
\end{abstract}

\maketitle

\section{Introduction}
Let $R$ denote a commutative noetherian ring and $\modr$ the category of finitely generated $R$-modules. Let $\CP$ denote the full subcategory of projective $R$-modules and $\CA$ denote a resolving subcategory of $\modr$. Let $\xbar \CA$ denote the full subcategory of $\modr$ consisting of modules which have a finite resolution by modules in $\CA$, i.e., $M \in \modr$  such that $\text{dim}_{\tiny \CA}(M) < \infty$. This is equivalent to demanding that all higher syzygies of $M$ lie in $\CA$. When $\CA = \CP$,  $\text{dim}_{\tiny \CA}$ is just the projective dimension and we abbreviate it by $\pd$. For a subcategory $\mathcal{K} \subset \modr$, let $Ch^b(\mathcal{K})$ denote the category of bounded chain complexes in $\mathcal{K}$. For a Serre subcategory $\L$ of $\modr$, let $Ch^{b}_{\tiny \L}(\mathcal{K})$ be the full subcategory of $Ch^b(\mathcal{K})$ consisting of complexes with homologies in $\L$ and $D^b_{\tiny \L}(\mathcal{K})$ denote the corresponding derived category obtained by inverting quasi-isomorphisms. Associating a complex to its resolution by $\CA$-modules yields
a functor $D^b(\xbar \CA \cap \L) \rightsquigarrow D^b_{\tiny \L}(\CA)$. A Serre subcategory $\L$ of $\modr$ is said to satisfy condition (*) if given an ideal $I \subseteq R$, such that $R/I \in \L$, there exists a regular sequence $a_1,\dots,a_c$ in $I$ such that $R/(a_1,\dots,a_c) \in \L$ (\cite[Definition 2.11]{SandersSane}). When $\L$ satisfies condition (*) (for example, when $\L = \L_{V(I)}$ where $I$ is a set-theoretic complete intersection), it was proved in \cite[Lemma 4.6]{SandersSane}, that the above functor is an equivalence. 
In this article, we generalize the above-mentioned result to a much larger class of Serre subcategories (refer Theorem~\ref{derivedequivalence}). In particular, we prove the following:
\begin{thm}\label{intro-der-equiv}
Let $R$ be a commutative noetherian ring. Let $\CA \subseteq \modr$ be a resolving subcategory. Then there is an equivalence of categories $ D^b(\xbar{\CA} \cap \L) \rightsquigarrow {D^b_{\tiny \L}}(\CA)$ in the following cases :
    \begin{enumerate}
        \item $R$ is a regular ring.
        \item $\L$ satisfies condition $(*)$.
        \item $\L = \L_{V(I)}$ where $I \subseteq R$ is an ideal which has a filtration $\{ I_n \}$ which is equivalent to the $I$-adic filtration and $\textrm{dim}_{\tiny \CA}(R/I_n) < \infty$.
    \end{enumerate}
\end{thm}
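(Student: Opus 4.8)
Assertion (3) is exactly Theorem~\ref{derivedequivalence}, and assertion (2) is \cite[Lemma 4.6]{SandersSane}, condition $(*)$ being precisely its hypothesis. The only remaining task is to deduce assertion (1) from (3), which I would do by a colimit argument over closed subsets of $\operatorname{Spec} R$.

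\textbf{Reduction to closed supports.}
Serre subcategories $\L$ of $\modr$ are in bijection with specialization-closed subsets $W \subseteq \operatorname{Spec} R$, via $\L = \{ M \in \modr : \Supp M \subseteq W \}$, and $W$ is the union of the closed sets $V(J) \subseteq W$ (any $M \in \L$ lies in $\L_{V(\Ann M)}$ with $V(\Ann M) = \Supp M \subseteq W$). This family is directed under inclusion because $V(J_1) \cup V(J_2) = V(J_1 \cap J_2)$, so $\L = \bigcup_{V(J) \subseteq W} \L_{V(J)}$ is a directed union of Serre subcategories. A bounded complex has only finitely many (finitely generated) homology modules and terms, so for a complex in ${Ch^b_{\tiny \L}}(\CA)$, resp. in $Ch^b(\xbar{\CA} \cap \L)$, all of them already lie in a common $\L_{V(J)}$; since inverting quasi-isomorphisms commutes with these directed unions, ${D^b_{\tiny \L}}(\CA)$ is the directed colimit of the ${D^b_{\tiny \L_{V(J)}}}(\CA)$ and $D^b(\xbar{\CA} \cap \L)$ is the directed colimit of the $D^b(\xbar{\CA} \cap \L_{V(J)})$. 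The resolution functors are compatible with the inclusions $\L_{V(J_1)} \subseteq \L_{V(J_2)}$, and a directed colimit of equivalences along fully faithful inclusions is itself an equivalence; so it suffices to prove the equivalence for each $\L_{V(J)}$ with $V(J) \subseteq W$, and by (3) it is enough to equip each such $J$ with a filtration equivalent to its adic filtration and having $\dim_{\CA}(R/J_n) < \infty$.

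\textbf{The regular case.}
When $R$ is regular, take simply $J_n := J^n$, the $J$-adic filtration, which is trivially equivalent to itself. Since $\CP \subseteq \CA$, a finite projective resolution is in particular a finite $\CA$-resolution, so $\dim_{\CA}(R/J^n) \le \pd_R(R/J^n)$, and the right-hand side is finite because $R$ has finite global dimension. Thus (3) applies to every $\L_{V(J)}$ with $V(J) \subseteq W$, and the colimit reduction of the previous paragraph yields (1).

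\textbf{Where the difficulty lies.}
The real content is concentrated in Theorem~\ref{derivedequivalence}, and the step I expect to be the main obstacle there — hence of the whole argument — is essential surjectivity of the resolution functor: given a bounded complex $P^{\bullet}$ of $\CA$-modules with homology supported on $V(I)$, one must produce a bounded complex of modules of finite $\CA$-dimension supported on $V(I)$ whose $\CA$-resolution is quasi-isomorphic to $P^{\bullet}$. The homology of $P^{\bullet}$ is annihilated by $I_n$ for $n \gg 0$, which suggests replacing $P^{\bullet}$ by a complex manufactured from the Koszul complex $K(s_1^n, \ldots, s_d^n)$ on a generating set $s_1, \ldots, s_d$ of $I$; the obstruction to the natural comparison being a quasi-isomorphism is carried by the higher Koszul homology $H_{\geq 1}(K(s_1^n, \ldots, s_d^n))$, which does not vanish for any single $n$, so one really must pass to $n \gg 0$. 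The device that removes this obstruction, while keeping all complexes bounded and of finite $\CA$-dimension, is precisely the factorization — valid for $n \gg 0$ — of the natural map $K(s_1^n, \ldots, s_d^n) \to K(s_1, \ldots, s_d)$ through the Tate resolution on $s_1^n, \ldots, s_d^n$, which is a genuine free resolution of $R/(s_1^n, \ldots, s_d^n)$. Full faithfulness of the functor, by contrast, follows the standard resolution-functor pattern already present in \cite[Lemma 4.6]{SandersSane}, using uniqueness up to homotopy and functoriality of $\CA$-resolutions, and is not where the new difficulty resides.
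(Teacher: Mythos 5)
Your proposal is correct in substance but organized differently from the paper. The paper proves all three cases uniformly: it isolates a ``strong reducer'' property for $\L$ (Definition \ref{SR prop}), shows in Theorems \ref{general derived equivalence} and \ref{derivedequivalence} that this property yields the equivalence, and then gives in Lemma \ref{koszul trick} a sufficient condition for it --- namely that every ideal $I'$ with $R/I' \in \L$ contains an ideal $I$ with $R/I \in \L$ admitting a filtration equivalent to the $I$-adic one whose quotients have finite $\CA$-dimension. All three cases then become one-line verifications of that hypothesis (for (1) take $I = I'$ and the adic filtration, using that $\dim_{\tiny \CA} \leq \pd < \infty$ over a regular ring; for (2) take the regular sequence supplied by condition $(*)$ and its square-power filtration; for (3) take a power of $I$ inside $I'$ and a tail of $\{I_n\}$). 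Two remarks on your version. First, assertion (3) is not literally Theorem \ref{derivedequivalence}: that theorem is conditional on the SR property, so one still owes the verification that $\L_{V(I)}$ satisfies the hypothesis of Lemma \ref{koszul trick}, and the point there is that the filtration data must be produced not just for $I$ but for every $I'$ with $V(I') \subseteq V(I)$, which is exactly where the tail-of-the-filtration observation enters. Second, your colimit argument for (1) is sound --- the hom-sets of ${D^b_{\tiny \L}}(\CA)$ and of $D^b(\xbar{\CA} \cap \L)$ really are the filtered colimits over closed sets $V(J) \subseteq V_{\L}$, by the roof description of morphisms, so a compatible family of equivalences passes to the colimit --- but it is more machinery than the paper needs: since the hypothesis of Lemma \ref{koszul trick} is quantified over all ideals supported in $V_{\L}$, the regular case is handled directly for an arbitrary Serre subcategory without decomposing $\L$. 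Your diagnosis of where the real difficulty lies, namely producing the comparison maps from truncated Tate resolutions through Koszul complexes for $n \gg 0$, matches the paper's Theorem \ref{chainmapexists} and its use in Lemma \ref{koszul trick}.
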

Theorem \ref{intro-der-equiv} consolidates both the known cases of the derived equivalence, namely (1) and (2), and extends it in the third case. We define an ideal $I$ to have eventually finite projective dimension (efpd) (refer Definition \ref{def-efpd}) if it satisfies the property in (3) in Theorem \ref{intro-der-equiv} when all $I_n$ have finite projective dimension. An immediate and particularly interesting consequence of Theorem \ref{intro-der-equiv} is :
\begin{cor}\label{intro-cor}
\begin{enumerate}[(a)]
    \item Let $\L = \L_{V(I)}$ where $I$ has efpd. Then $ D^b( \xbar{\CA} \cap \L ) \simeq {D^b_{\tiny \L}}(\CA) $.
    \item If $R$ is of prime characteristic and $\pd(R/I) < \infty$, then $I$ has efpd and hence the above derived equivalence holds.
    \item When $R$ is a regular ring or $\L$ satisfies condition {\rm(*)} or $\L = \L_{V(I)}$ where $I$ has efpd, then $ D^b( \xbar{\CP} \cap \L ) \simeq \DbLP $.
\end{enumerate}
\end{cor}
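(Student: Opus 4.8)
The plan is to deduce all three parts directly from Theorem~\ref{intro-der-equiv} (in the precise form Theorem~\ref{derivedequivalence}); the only substantive point is the verification in part~(b) that an ideal of finite projective dimension over a ring of prime characteristic has efpd, so that part~(b) reduces to part~(a).

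For part~(a), suppose $I$ has efpd, so there is a filtration $\{I_n\}$ equivalent to the $I$-adic filtration with $\pd(R/I_n) < \infty$ for every $n$. Since $\CP \subseteq \CA$ for any resolving subcategory $\CA$, a finite free resolution of $R/I_n$ is in particular a finite resolution by objects of $\CA$, so $\dim_{\tiny\CA}(R/I_n) \le \pd(R/I_n) < \infty$; moreover $I^m \subseteq I_n$ for suitable $m$ forces $\Supp(R/I_n) \subseteq V(I)$, so $R/I_n \in \L_{V(I)}$. Hence case~(3) of Theorem~\ref{intro-der-equiv} applies with $\L = \L_{V(I)}$ and yields $D^b(\xbar{\CA}\cap\L) \simeq D^b_{\tiny\L}(\CA)$. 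Part~(c) is the same reduction with $\CA = \CP$: then $\dim_{\tiny\CP} = \pd$ and $\xbar{\CP}$ is the category of modules of finite projective dimension, so each of the three listed hypotheses — $R$ regular, $\L$ satisfies $(*)$, or $\L = \L_{V(I)}$ with $I$ of efpd — is exactly one of the three cases of Theorem~\ref{intro-der-equiv} instantiated at $\CA = \CP$, and $D^b_{\tiny\L}(\CP) = \DbLP$.

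The work is in part~(b). Let $R$ have prime characteristic $p$, let $\pd(R/I) < \infty$, fix generators $I = (a_1,\dots,a_r)$, and set $I_n := I^{[p^n]} = (a_1^{p^n},\dots,a_r^{p^n})$, the $n$-th Frobenius power. First I would check that $\{I_n\}$ is a decreasing filtration equivalent (cofinal) to the $I$-adic filtration: $I^{[p^{n+1}]}\subseteq I^{[p^n]}$ because each $a_i^{p^{n+1}} = (a_i^{p^n})^p$; on one side $I^{[p^n]}\subseteq I^{p^n}\subseteq I^n$, and on the other, any monomial in the $a_i$ of total degree $\ge r(p^n-1)+1$ has an exponent $\ge p^n$, whence $I^{rp^n}\subseteq I^{r(p^n-1)+1}\subseteq I^{[p^n]}$. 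Second — the crux — I would show $\pd_R(R/I^{[p^n]}) < \infty$. Writing $F$ for the Frobenius functor, $R/I^{[p^n]} = F^n(R/I)$, and by the theorem of Peskine--Szpiro on the Frobenius functor and modules of finite projective dimension (equivalently, $\Tor_i^R$ of $R/I$ against $R$ regarded as a module via the $n$-fold Frobenius vanishes for $i>0$), applying $F^n$ to a finite free resolution of $R/I$ produces a finite free resolution of $R/I^{[p^n]}$, so $\pd_R(R/I^{[p^n]}) \le \pd_R(R/I) < \infty$. Concretely one checks the Buchsbaum--Eisenbud acyclicity criterion for the image complex using the identity $I_t(F(\phi)) = I_t(\phi)^{[p]}$ together with $\sqrt{I_t(\phi)^{[p]}} = \sqrt{I_t(\phi)}$ (so grades, and hence the rank and depth conditions, are preserved — positivity of the relevant grades rules out any collapse of determinantal rank even though $R$ need not be reduced), or one invokes the depth-theoretic acyclicity lemma of Peskine--Szpiro directly after localizing and inducting on dimension. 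These two facts say precisely that $I$ has efpd, and the claimed equivalence then follows from part~(a).

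The main obstacle is this finiteness statement $\pd_R(R/I^{[p^n]}) < \infty$: cofinality of $\{I^{[p^n]}\}$ with the $I$-adic filtration is elementary, but stability of finite projective dimension under the Frobenius functor is the nontrivial input, and the one place demanding care is that the determinantal-rank form of the acyclicity criterion must be applied through the equality of radicals $\sqrt{I_t(\phi)^{[p]}} = \sqrt{I_t(\phi)}$ (valid in any noetherian ring of characteristic $p$) rather than through any naive claim about the ideals of minors themselves.
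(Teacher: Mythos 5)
Your proposal is correct and takes essentially the same route as the paper: all three parts are read off from Theorem~\ref{der-equiv-main} (using $\CP \subseteq \CA$ to get $\dim_{\CA}(R/I_n) \le \pd(R/I_n) < \infty$), and part~(b) rests on the cofinality of the Frobenius-power filtration with the $I$-adic filtration together with the Peskine--Szpiro theorem that Frobenius powers of an ideal of finite projective dimension again have finite projective dimension, which the paper records as Example~\ref{equivalent filtration eg}(i), Lemma~\ref{char p proj dim} and Example~\ref{efpd example}(2). The only difference is that you sketch a proof of the Peskine--Szpiro input via the Buchsbaum--Eisenbud acyclicity criterion and the identity of radicals $\sqrt{I_t(\phi)^{[p]}} = \sqrt{I_t(\phi)}$, whereas the paper simply cites \cite[Theorem 1.7]{Peskine_Szpiro_thesis}.
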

The efpd property appears in  literature before (without a name), in particular in \cite{Peskine_Szpiro_thesis}, where the more general case of filtrations of modules with this property is considered. We formally define ideals with efpd in subsection \ref{efpd-subsection} of the preliminaries and discuss some examples and related questions after the definition and in Section \ref{last section}.

The main technique in the proof of Theorem \ref{intro-der-equiv} is to show that for any complex $X_{\bullet}$ in ${D^b_{\tiny \L}}(\CA)$ with more than one non-zero homology, there exists a morphism from a suitable projective resolution in ${D^b_{\tiny \L}}(\CA)$ such that the cone has one homology less. This is an example of what we define as a strong reducer for $X_{\bullet}$ (refer definition \ref{SR prop}). This idea has been used earlier in \cite{SandersSane}, where Koszul complexes were used to produce strong reducers. In fact, once strong reducers exist, the proofs in \cite{SandersSane} almost verbatim produce the derived equivalence. These proofs along with the proof of Theorem \ref{intro-der-equiv} occupy much of Section \ref{section der-equiv}.

In this article, we show that suitable Tate resolutions play the role of strong reducers. The procedure to do this is via morphisms from Tate resolutions to Koszul complexes. We describe the relevant result next since it is of independent interest.
Let $\tildes = s_1,\dots,s_d $ be any sequence of elements in $R$ and $K(\tildes)$ the Koszul complex on $\tildes$. Recall that the Tate resolution $T (\tildes)$ is obtained by resolving successive homologies starting from $K(\tildes)$ to form a projective resolution of $R/(\tildes)$. In fact, both complexes are differential graded algebras, and by construction, there is a natural inclusion $K(\tildes) \rightarrow T (\tildes)$ of dg-algebras.
It is well-known that $K(\tildes)$ and $T (\tildes)$ ``coincide" if and only if $\tildes$ is a regular sequence. In other words, an inverse $\xymatrix{ T (\tildes) \ar@{-->}[r] & K(\tildes) \\}$ exists if and only if $\tildes$ is a regular sequence. Now let $\tildes^n$ denote the sequence obtained from $\tildes$ by taking $n^{th}$ powers of each of the elements of $\tildes$. The natural surjection $R/(\tildes^{u}) \twoheadrightarrow R/(\tildes^r)$ induces a natural chain complex map $\kappa^{u,r} : K(\tildes^{u}) \rightarrow K(\tildes^r)$ which is the identity map in the case $u = r$. In Theorem \ref{chainmapexists}, we prove that for \textit{any} sequence $\tildes$, for sufficiently large $u$, the map $\kappa^{u,r}$ factors through $T(\tildes^u)$, i.e.,
\begin{equation}\label{intro-diag}
\xymatrix{ K(\tildes^u) \ar@{^{(}->}[r] \ar[rd]_{\kappa^{u,r}}& T(\tildes^u) \ar@{-->}[d]^{\phi} \\ & K(\tildes^r) \\} .
\end{equation}
When $\tildes = \{s\}$, a singleton, this phenomenon and the role of raising powers and $R$ being noetherian can be seen as follows : Since $R$ is noetherian, there exists $l \in \BN$ such that $\Ann(s^{l}) = \Ann(s^{l+1})$. The diagram below shows the map $\phi$ when $n = l+1$.
$$\xymatrix{ T (s^{{l}+1}): \quad \ar[r] & R^{t} \ar[r]^-{\partial} \ar[d]^{0} & Re \ar[r]^{(s^{{l}+1})} \ar[d]^{(s^{l})}  & R \ar[r] \ar[d]^{id}& 0 \\ K(s): \quad  \ar[r]  & 0 \ar[r] & Re \ar[r]^{(s)}  & R \ar[r]  & 0}.$$
Since $\partial$ maps a basis of $R^t$ to the generators of $\Ann(s^{l+1}) = \Ann(s^{l})$, we see that $\phi$ is a chain complex map.

Let $N$ be an $R$-module. Another interesting consequence of diagram \eqref{intro-diag} is that it shows a deeper connection between Koszul cohomologies $\left\{ H^i(K(\tildes^r;N)^\vee) \right\}_{r \geq 1}$ and $\Ext$-modules $\left\{ \Ext^i_R(R/(\tildes^{r}),N) \right\}_{r \geq 1}$, as shown in Corollary \ref{connection tor ext}. As an immediate consequence, we get a new proof that the local cohomology module $H^i_I(N)$ is isomorphic to the direct limit $\lim_{\longrightarrow} H^i(K(\tildes^r;N)^\vee)$ without using the \v{C}ech complex.

 The proof of the existence of $\phi$ relies on vanishing statements for natural maps on Koszul homology modules after raising powers, which are known to follow from the statement that $\Tor^R_i(R/I^n,M) \to \Tor^R_i(R/I^r,M)$ is zero for $n \gg 0$, which is an implication of the 
Artin-Rees lemma. We show further that the existence of $\phi$ is really equivalent to the vanishing statement above for the Tor maps. The exponent $n$ in the statement, thought of as a function of $r$ and $i$ has been studied before, for example, in \cite{andre_michel_commalg}. In Theorem \ref{uniform artin rees}, we show that $n$ can be chosen to be
a polynomial function on $r$ and is also independent of the degree $i$, which is an immediate consequence of Theorem \ref{chainmapexists}. This vanishing statement for the Tor maps has been studied for local rings in \cite{Eisenbud_Huneke} and \cite{Aberbach} (also refer Definition \ref{SAR defn} and its commentary).

The connection between the Artin-Rees lemma and the derived equivalence is classical. In \cite{Grothendieck}, there are conditions under which $D^b(\CB) \rightarrow D^b_{\tiny \CB}(\CA)$ is an equivalence, where $\CA$ is an abelian category and $\CB$ is a Serre subcategory. In \cite[Pg. 17, Example (b)]{Keller}, it is checked that the conditions hold when $\CA = \modr$ and $\CB$ is the full subcategory of $I$-torsion modules for an ideal $I \subseteq R$. The proof crucially uses the Artin-Rees lemma.

A final consequence of the existence of $\phi$ is to obtain bounds on two K-theoretic invariants $\alpha(\L)$ and $\beta(\L)$ which were introduced in \cite{ChandaSane} (refer Section~\ref{sec-prelims}) for more details). Using Theorem \ref{chainmapexists}, it can be shown that $\beta(\L)$ and hence $\alpha(\L)$ can be bounded by the projective dimension of $R/I$ when $R$ is of prime characteristic, $\L = \L_{V(I)}$ and $I$ is a perfect ideal.

\section{Preliminaries} \label{sec-prelims}
Let us fix some notations: Let $d,n \in \BN$ and 
$\P$ be the set of all ordered $m$-subsets of $[d] = \{ 1, \ldots , d \}$. Let $s_1,\dots,s_d \in R$. Denote $\tildes^n = \{ s_1^n,\dots,s_d^n \}$  and $ \tildes = \tildes^1$.
Recall that the Koszul complex $K(\tildes^n) = \Exterior(\oplus_{i=1}^d Re_i)$ is a differential graded algebra with $\deg(e_i) = 1$ for all $i$ and the differential mapping $e_i$ to $s_i^n$. Hence the algebra structure on $K(\tildes^n)$ is the exterior algebra for all $n \in \BN$.

Let $M \in \modr$. Recall that the Koszul complex on $\tildes^n$ with coefficients in $M$ is $
K(\tildes^n;M) = \left( K_m{(\tildes^n;M)}, \partial_m^{K(\tildes^n;M)}\right)_{m = 0}^{d}$ where $K_m{(\tildes^n;M)} = K_m{(\tildes^n)} \otimes_R M$ and $\partial_m^{K(\tildes^n;M)}$ is the induced map. A typical element of $K_m{(\tildes^n;M)} = \bigoplus_{\tildei \in \tiny \P} M\ei$ is of the form $\sum_{\tildei \in \tiny \P} c_{\tildei} \ei$ where $c_{\tildei} \in M$. Some authors denote $H_m(K(\tildes^n; M))$ by $H_m(\tildes^n; M)$. \\
     In the introduction, a basic example of how chains of annihilator ideals are useful was demonstrated. In a similar spirit, the modules 
     $(0:_M s^t)$ will play an important role in the article. We state some basic properties of these submodules.
\begin{notation}\label{notations1}{\rm
    Since $R$ is noetherian and $M$ is a finitely generated $R$-module, for each $s \in R$, the ascending chain of submodules of $M$, \[ (0:_M s) \subseteq \cdots \subseteq (0:_M s^t) \subseteq (0:_M s^{t+1}) \subseteq \cdots \] stabilizes. Let $l_i \in \BN$ be such that $(0:_M s_i^{{l}_i}) = (0:_M s_i^{{l}_i+k})$ for all $k \in \BN$. We define  $$l = l(\tildes) \coloneqq \max \{l_i \mid i \in [d] \}. $$  
}\end{notation}
    \begin{lemma}\label{annihilatorlemma}
        \begin{enumerate}[(i)]
            \item Let $k \geq 0$. Then $\left( 0:_M \left( \prod_{t \in [d]}s_t^{l+k}\right) \right) = \left(0:_M \left(\prod_{t \in [d]}s_t^{l}\right) \right)$.
            \item Let $x,y \in M$ such that $s_i^{l}$ divides $x$ and $y$. If $s_i x = s_i y$, then $x = y$. 
        \end{enumerate}
    \end{lemma}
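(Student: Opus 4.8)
The plan is to derive both parts directly from the defining property of $l = l(\tildes)$. Since $l \geq l_i$ for every $i \in [d]$, each ascending chain $(0:_M s_i^t)$ has already stabilized by exponent $l$, so that
\[ (0:_M s_i^{l}) = (0:_M s_i^{l+k}) \quad \text{for all } i \in [d] \text{ and all } k \geq 0. \]
This single observation is the engine behind both statements; the point of taking the maximum in Notation~\ref{notations1} is precisely to make one exponent work simultaneously for all the $s_i$.

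For (ii), I would simply unwind the hypotheses. Write $x = s_i^{l} a$ and $y = s_i^{l} b$ with $a, b \in M$. The equality $s_i x = s_i y$ becomes $s_i^{l+1}(a - b) = 0$, so $a - b \in (0 :_M s_i^{l+1}) = (0 :_M s_i^{l})$ by the displayed equality; hence $s_i^{l}(a - b) = 0$, which is exactly $x = y$.

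For (i), the inclusion $\supseteq$ is immediate since $\prod_{t \in [d]} s_t^{l}$ divides $\prod_{t \in [d]} s_t^{l+k}$. For the reverse inclusion I would peel off the variables one at a time. Let $m \in M$ with $\left( \prod_{t \in [d]} s_t^{l+k} \right) m = 0$. The element $\left( \prod_{t \neq 1} s_t^{l+k} \right) m$ is killed by $s_1^{l+k}$, hence lies in $(0:_M s_1^{l+k}) = (0:_M s_1^{l})$, so it is killed by $s_1^{l}$; that is, $s_1^{l} \left( \prod_{t \neq 1} s_t^{l+k} \right) m = 0$. Iterating, at the $j$-th step the element $\left( \prod_{t < j} s_t^{l} \right) \left( \prod_{t > j} s_t^{l+k} \right) m$ is annihilated by $s_j^{l+k}$, hence by $s_j^{l}$; after running $j = 1, \dots, d$ we arrive at $\left( \prod_{t \in [d]} s_t^{l} \right) m = 0$, i.e. $m \in \left( 0 :_M \prod_{t \in [d]} s_t^{l} \right)$.

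I do not expect any genuine obstacle here: both parts are elementary consequences of the stabilization of the annihilator chains, and the only thing one must keep straight is the bookkeeping in the iteration for (i), together with the fact that the exponent $l$ is uniform across all the $s_i$.
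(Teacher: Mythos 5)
Your proof is correct and follows essentially the same route as the paper's: part (i) by peeling off one $s_i$ at a time using the stabilized annihilator chains (the paper starts from $s_d$, you from $s_1$, which is immaterial), and part (ii) by writing $x-y = s_i^{l}z$ and using $(0:_M s_i^{l+1}) = (0:_M s_i^{l})$. No gaps.
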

    \begin{proof}
        (i) Let $m \in M$ such that $\left( \prod_{t \in [d]}s_t^{l+k}\right) m = 0$. We have $(0:_M s_i^{{l}_i}) = (0:_M s_i^{{l}_i+k})$ for all $i \in [d]$. Hence $s_d^{l}\left( \prod_{t \in [d-1]}s_t^{l+k}\right) m  =0$. Continuing inductively, we get $\left(\prod_{t \in [d]}s_t^{l}\right) m = 0$. 
        Hence, the LHS is contained in the RHS. The other containment is clear, and this proves (i).
        \par (ii) Since $s_i^l$ divides $x$ and $y$, $x-y = s_i^l z$ for some $z \in M$. Given that $s_i (x -  y) = 0$. Hence $s_i^{l+1}z = 0$. By the definition of $l$, $s_i^lz = 0$. Therefore $x-y =0$ which implies $x = y$.
    \end{proof}

\subsection*{The Artin-Rees Lemma and vanishing of maps on Tor modules}
\label{ar-subsection}
\begin{lemma}[Artin-Rees]
    Let $R$ be a commutative noetherian ring and $I$ an ideal in $R$. Let $M$ be a finitely generated $R$-module and $N \subseteq M$. Then there exists $h \in \BN$ such that for all $r \geq 0$,
    $$I^{r+h}M \cap N = I^{r}(I^hM \cap N).$$
\end{lemma}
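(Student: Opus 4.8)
The plan is to run the classical Rees-algebra argument. First I would pick generators $I = (a_1,\dots,a_k)$, which exist since $R$ is noetherian, and form the Rees algebra $R^\ast = \bigoplus_{n\ge 0} I^n$, realised as the graded subring $R[a_1 t,\dots,a_k t] \subseteq R[t]$ with grading by the $t$-degree (so $(R^\ast)_n = I^n$). By Hilbert's basis theorem $R^\ast$ is noetherian, being a finitely generated algebra over a noetherian ring.

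Next I would package the data of the lemma into graded $R^\ast$-modules. Set $M^\ast = \bigoplus_{n\ge 0} I^n M$ (with $I^0 M = M$), which is finitely generated over $R^\ast$, since it is generated by $M$ sitting in degree $0$: indeed $(R^\ast)_n\cdot M = I^n M = (M^\ast)_n$. Inside $M^\ast$ consider the graded subgroup $N^\ast = \bigoplus_{n\ge 0}(I^n M \cap N)$. From $I\cdot(I^n M\cap N)\subseteq I^{n+1}M \cap N$ one sees that $N^\ast$ is a graded $R^\ast$-submodule of $M^\ast$. Since $R^\ast$ is noetherian and $M^\ast$ is finitely generated, $N^\ast$ is a finitely generated $R^\ast$-module; I would fix a finite set of homogeneous generators and let $h\in\BN$ be the maximum of their degrees.

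It then remains to extract the stated equality from this bound. The inclusion $I^r(I^h M\cap N)\subseteq I^{r+h}M\cap N$ is immediate from $I^r I^h M = I^{r+h}M$ and $I^r N\subseteq N$. For the reverse, any $x\in I^{r+h}M\cap N = (N^\ast)_{r+h}$ can be written as a sum of terms $c\,y$ with $y$ a homogeneous generator of degree $d\le h$ and $c\in(R^\ast)_{r+h-d} = I^{\,r+(h-d)} = I^r\cdot I^{h-d}$ (using $h-d\ge 0$ and $I^{a+b}=I^aI^b$ for ideal powers). Each such term lies in $I^r\bigl(I^{h-d}y\bigr)\subseteq I^r\bigl((N^\ast)_h\bigr) = I^r(I^h M\cap N)$, since $I^{h-d}\cdot(N^\ast)_d\subseteq(N^\ast)_h$; summing over the terms gives $x\in I^r(I^h M\cap N)$, which is the lemma.

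The one genuinely non-formal ingredient is the finite generation of $N^\ast$ over $R^\ast$, and this is precisely where noetherianity is used: via Hilbert's basis theorem applied to $R^\ast = R[a_1 t,\dots,a_k t]$, together with the fact that a submodule of a finitely generated module over a noetherian ring is again finitely generated. Everything else is bookkeeping with the grading; in particular, the uniform exponent $h$ appears for free as "finitely many generators, each of bounded degree."
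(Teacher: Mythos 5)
Your proof is correct: it is the standard Rees-algebra argument (form $R^\ast=\bigoplus_{n\ge 0}I^n$, show $N^\ast=\bigoplus_{n\ge 0}(I^nM\cap N)$ is a finitely generated graded submodule of $M^\ast$, and take $h$ to be the top degree of a finite homogeneous generating set), and the degree bookkeeping in your final step is accurate. The paper itself states the Artin--Rees lemma as a classical result without proof, so there is no in-paper argument to compare against; your write-up supplies exactly the usual textbook proof and has no gaps.
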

\noindent A weaker version of the Artin-Rees lemma which is often useful is 
\begin{equation*}\label{weaker artin rees}
    I^{r+h}M \cap N \subseteq I^{r} N.
\end{equation*}

\begin{lemma}\label{Lem-Syz-Tor}
    Let $I\subseteq R$ be an ideal, $P_\bullet$ a projective resolution of  $M$ and $B_i \subseteq P_i$ the image of $\partial_{i}^P$. Then 
    $$\Tor_i^R(M,R/I) = \Tor_1^R(P_{i-1}/B_{i-1},R/I) = (I P_{i-1} \cap B_{i-1})/I B_{i-1}.$$
\end{lemma}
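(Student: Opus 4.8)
The plan is to reduce the three-term identity to two separate, standard computations: a dimension-shifting identity and a direct description of $\Tor_1$ against $R/I$. First I would establish the right-hand equality $\Tor_1^R(P_{i-1}/B_{i-1}, R/I) = (IP_{i-1} \cap B_{i-1})/IB_{i-1}$. To do this, apply $- \otimes_R R/I$ to the short exact sequence $0 \to B_{i-1} \to P_{i-1} \to P_{i-1}/B_{i-1} \to 0$. Since $P_{i-1}$ is projective (hence flat), $\Tor_1^R(P_{i-1}, R/I) = 0$, so the long exact sequence gives $0 \to \Tor_1^R(P_{i-1}/B_{i-1}, R/I) \to B_{i-1} \otimes_R R/I \to P_{i-1} \otimes_R R/I$. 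Thus $\Tor_1^R(P_{i-1}/B_{i-1}, R/I)$ is the kernel of $B_{i-1}/IB_{i-1} \to P_{i-1}/IP_{i-1}$, which is exactly $(IP_{i-1} \cap B_{i-1})/IB_{i-1}$ once one checks that the preimage in $B_{i-1}$ of $IP_{i-1}$ is $IP_{i-1}\cap B_{i-1}$ and that this contains $IB_{i-1}$.

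Next I would handle the left-hand equality $\Tor_i^R(M, R/I) = \Tor_1^R(P_{i-1}/B_{i-1}, R/I)$ by dimension shifting. Break the projective resolution $P_\bullet$ into short exact sequences $0 \to B_j \to P_j \to P_j/B_j \to 0$ and note the identification of syzygies $P_j/B_j \cong Z_{j-1} = B_{j-1}$ coming from exactness (more precisely, $P_{i-1}/B_{i-1}$ is the $(i-1)$-st syzygy of $M$; here one uses that $\partial_{i-1}^P$ induces an isomorphism $P_{i-1}/B_{i-1} \xrightarrow{\sim} B_{i-2}$ for $i \geq 2$, and for $i = 1$ we simply have $P_0/B_0 = M$ since $B_0 = \ker(P_0 \to M) = \operatorname{im}\partial_1^P$... wait, let me restate: $B_0$ is the image of $\partial_1^P$, so $P_0/B_0 = M$ directly, giving the $i=1$ case trivially). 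Since each $P_j$ is flat, the long exact sequence for $\Tor(-, R/I)$ yields isomorphisms $\Tor_{k}^R(P_j/B_j, R/I) \cong \Tor_{k-1}^R(B_{j-1}, R/I) \cong \Tor_{k-1}^R(P_{j-1}/B_{j-1}, R/I)$ for $k \geq 2$, and iterating this shift $i-1$ times moves $\Tor_i^R(M, R/I) = \Tor_i^R(P_0/B_0, R/I)$ down to $\Tor_1^R(P_{i-1}/B_{i-1}, R/I)$.

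I do not expect a serious obstacle here; the argument is entirely standard homological algebra. The one point requiring mild care is bookkeeping the indices in the syzygy identification — i.e., making sure the chosen convention for $B_i$ (the image of $\partial_i^P$, so that $P_i/B_i$ is the cokernel of the next differential, matching the $(i-1)$-st syzygy) lines up so that the dimension shift terminates precisely at $\Tor_1$ of $P_{i-1}/B_{i-1}$ rather than an off-by-one variant. It is also worth noting the degenerate case $i = 1$ separately, where no shifting is needed and only the first computation is invoked. Everything else follows from flatness of projectives and the long exact sequence of $\Tor$.
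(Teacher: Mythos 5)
Your proposal is correct and follows essentially the same route as the paper: the second equality comes from tensoring $0 \to B_{i-1} \to P_{i-1} \to P_{i-1}/B_{i-1} \to 0$ with $R/I$ and using flatness of $P_{i-1}$, while the first is the standard dimension-shifting identification of $P_{i-1}/B_{i-1}$ with a syzygy of $M$ (the paper simply cites \cite[Exercise 2.4.3]{weibelhomological} for this step). The only blemish is an index slip in your displayed shift, which should read $\Tor_k^R(P_j/B_j,R/I) \cong \Tor_{k-1}^R(B_j,R/I) \cong \Tor_{k-1}^R(P_{j+1}/B_{j+1},R/I)$, so that $j$ increases as $k$ decreases; your stated endpoint $\Tor_1^R(P_{i-1}/B_{i-1},R/I)$ is nevertheless the correct one.
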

\begin{proof}
    From  \cite[Exercise 2.4.3]{weibelhomological}, $\Tor_i^R(M,-) = \Tor_1^R(Z_{i-2},-)$ and since $P_\bullet$ is exact, $$Z_{i-2} = B_{i-2} = P_{i-1}/ Z_{i-1} = P_{i-1}/ B_{i-1}.$$ This proves the first equality. Applying the long exact sequence of $\Tor$ to the exact sequence $0 \to B_{i-1} \to P_{i-1} \to P_{i-1}/B_{i-1} \to 0$ and noting that $ \Tor_1^R(P_{i-1},R/I) = 0$ yields
    \begin{align*}
        \Tor_1^R(P_{i-1}/B_{i-1},R/I) & = \ker \left( B_{i-1} \otimes R/I \to P_{i-1} \otimes R/I \right) \\
        & = \ker \left( B_{i-1}/IB_{i-1} \to P_{i-1}/IP_{i-1} \right) \\
        & = (I P_{i-1} \cap B_{i-1})/I B_{i-1}.
    \end{align*}
\end{proof}

\begin{rmk}
     By the Artin-Rees lemma, for all $i\geq 1$, there exists $h(i) \in \BN$ such that for all $r \geq 0$,
    \begin{equation}\label{artin rees rsln}
        I^{r+h(i)} P_{i-1} \cap B_{i-1} \subseteq I^r B_{i-1}.
    \end{equation}
    By Lemma \ref{Lem-Syz-Tor}, the natural map   
    $$(I^{r+h(i)} P_{i-1} \cap B_{i-1})/I^{r+h(i)} B_{i-1} \to (I^r P_{i-1} \cap B_{i-1})/I^r B_{i-1}$$ is same as the natural map $$\Tor_i^R(R/I^{r+h(i)},M) \to \Tor_i^R(R/I^r,M).$$
    The above map on $\Tor_i$ is zero if and only if $I^{r+h(i)} P_{i-1} \cap B_{i-1} \subseteq I^r B_{i-1}$, which is true from equation \eqref{artin rees rsln}. This also shows that equation \eqref{artin rees rsln} is independent of the projective resolution.
\end{rmk}

As an immediate consequence, we get the following result.
\begin{prop} \label{andre-homology}
Let $I\subseteq R$ be an ideal and $M$ a finitely generated $R$-module. For all $i \geq 1$, there exists $h(i) \in \BN$ such that for all $r \geq 1$, the map 
        \( \Tor^R_i(R/I^{r+h(i)},M) \to \Tor^R_i(R/I^r,M) \) is zero.
\end{prop}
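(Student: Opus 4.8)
The plan is to unwind everything to the concrete description of $\Tor$ furnished by Lemma~\ref{Lem-Syz-Tor} and then quote the Artin-Rees lemma directly. First I would fix $i \geq 1$ and choose, once and for all, a projective resolution $P_\bullet$ of $M$; write $B_{i-1} \subseteq P_{i-1}$ for the image of $\partial_i^P$. Applying the Artin-Rees lemma to the submodule $B_{i-1}$ of the finitely generated module $P_{i-1}$ produces an integer $h(i) \in \BN$ (an Artin-Rees number) such that for all $r \geq 0$,
$$ I^{r+h(i)} P_{i-1} \cap B_{i-1} \subseteq I^r \bigl( I^{h(i)} P_{i-1} \cap B_{i-1} \bigr) \subseteq I^r B_{i-1}, $$
which is exactly equation~\eqref{artin rees rsln}.

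Next I would invoke Lemma~\ref{Lem-Syz-Tor}, which identifies $\Tor_i^R(R/I^s, M)$ with $(I^s P_{i-1} \cap B_{i-1})/I^s B_{i-1}$ for every $s \geq 1$, together with the observation recorded in the Remark preceding the proposition that under this identification the natural map $\Tor_i^R(R/I^{r+h(i)}, M) \to \Tor_i^R(R/I^r, M)$ induced by the surjection $R/I^{r+h(i)} \twoheadrightarrow R/I^r$ is precisely the map induced by the evident inclusions of submodules. Granting this, an element of the source is represented by some $x \in I^{r+h(i)} P_{i-1} \cap B_{i-1}$; by the containment displayed above, $x \in I^r B_{i-1}$, so its image in $(I^r P_{i-1} \cap B_{i-1})/I^r B_{i-1}$ vanishes. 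Hence the map on $\Tor_i$ is zero for every $r \geq 1$, which is the assertion.

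The only point that is not purely formal — and hence the one I would spell out rather than merely cite — is the compatibility of the isomorphism of Lemma~\ref{Lem-Syz-Tor} with the transition maps: one must check that the square relating $\Tor_i^R(R/I^{r+h(i)}, M) \to \Tor_i^R(R/I^r, M)$ to the inclusion-induced map $(I^{r+h(i)} P_{i-1} \cap B_{i-1})/I^{r+h(i)} B_{i-1} \to (I^r P_{i-1} \cap B_{i-1})/I^r B_{i-1}$ commutes. This is just naturality of the connecting-homomorphism description of $\Tor_1$ in the second variable, applied to the morphism of short exact sequences $0 \to I^{r+h(i)} \to R \to R/I^{r+h(i)} \to 0$ mapping to $0 \to I^r \to R \to R/I^r \to 0$; it is already used implicitly in the Remark, so in the final write-up I would simply refer to the Remark and state that the proposition is immediate. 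Everything else is bookkeeping.
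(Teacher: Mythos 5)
Your proposal is correct and is essentially the paper's own argument: the paper derives the proposition as an immediate consequence of the preceding Remark, which likewise applies the Artin--Rees lemma to $B_{i-1}\subseteq P_{i-1}$ to obtain the containment $I^{r+h(i)}P_{i-1}\cap B_{i-1}\subseteq I^{r}B_{i-1}$ and then identifies the natural map on $\Tor_i$ with the induced map on $(I^{s}P_{i-1}\cap B_{i-1})/I^{s}B_{i-1}$ via Lemma~\ref{Lem-Syz-Tor}. Your additional remark on checking compatibility of that identification with the transition maps is a reasonable point to make explicit, but it does not change the route.
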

\noindent In \cite{Eisenbud_Huneke} and \cite{Aberbach}, 
equation \eqref{artin rees rsln} is studied for free resolutions of the module $M$.
\begin{defn} \cite{Aberbach} \label{SAR defn}
    Let $R$ be a commutative noetherian ring and $M$ a finitely generated $R$-module. Then $M$ is said to be syzygetically Artin-Rees with respect to an ideal $I$ if there exists a free resolution $P_\bullet$ and a uniform integer $h$  such that for all $r \geq 0$ and for $i \geq 0$,
    \begin{equation}\label{SAR}
    I^{r+h} P_i \cap B_i \subseteq I^{r} B_i.
\end{equation} 
\end{defn}
 In \cite[Corollary 4.9]{Aberbach}, it is proved that any finitely generated module $M$ is syzygetically Artin-Rees with respect to any ideal $I$ in a local ring which answers Question B in \cite{Eisenbud_Huneke}. A similar vanishing statement for maps between $\Tor$ modules has been studied in \cite{andre_michel_commalg}, which uses a weaker version of the Artin-Rees lemma.
 
 As mentioned in the introduction, we will crucially make use of the vanishing of maps between Tor modules for most part of Section \ref{section main thm} and will use Definition \ref{SAR defn} and its commentary for Theorem \ref{uniform artin rees} and its corollary.

\subsection*{The intersection theorem and Cohen-Macaulay rings}
In this subsection, we recall two well-known results for later use.
The following result is due to Peskine-Szpiro and Roberts.
\begin{thm}[Intersection theorem] 
    Let $R$ be a local ring and $M$ and $N$ non-zero finitely generated modules over $R$. If $M \otimes_R N$ has finite length, then $\dim(N) \leq \pd(M).$
\end{thm}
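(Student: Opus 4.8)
The plan is to reduce the assertion to the New Intersection Theorem of Peskine--Szpiro and Roberts, after which the desired inequality falls out of a support computation.

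First I would dispose of the trivial case: if $\pd(M) = \infty$ there is nothing to prove, so assume $s = \pd(M) < \infty$ and fix a minimal free resolution $F_\bullet : 0 \to F_s \to \cdots \to F_0 \to 0$ of $M$ over $R$. Set $\bar R = R/\Ann(N)$; since $N \neq 0$ is finitely generated we have $\dim N = \dim \bar R$, and $\Ann(N) \subseteq \m$ because $R$ is local and $\bar R \neq 0$. The strategy is to base change $F_\bullet$ to $\bar R$ and to recognize $F_\bullet \otimes_R \bar R$ as a short non-exact complex of free $\bar R$-modules whose homology has finite length.

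Indeed, $F_\bullet \otimes_R \bar R$ is a complex of finitely generated free $\bar R$-modules concentrated in homological degrees $0,\dots,s$, its $i$-th homology is $\Tor_i^R(M,\bar R)$, and $H_0(F_\bullet \otimes_R \bar R) = M/\Ann(N)M$ is nonzero by Nakayama, so the complex is not exact. Moreover $\Supp_{\bar R}\Tor_i^R(M,\bar R) \subseteq \Supp M \cap \Supp \bar R = \Supp M \cap \Supp N = \Supp(M \otimes_R N) \subseteq \{\m\}$ by hypothesis, so each $\Tor_i^R(M,\bar R)$ is finitely generated and supported only at the maximal ideal, hence of finite length. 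Now I apply the New Intersection Theorem over the local ring $\bar R$: a non-exact bounded complex of finitely generated free $\bar R$-modules, all of whose homology modules have finite length, must have length at least $\dim \bar R$. Applied to $F_\bullet \otimes_R \bar R$ this yields $s \geq \dim \bar R = \dim N$, that is, $\pd(M) \geq \dim N$, as required.

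The reduction just described is routine homological algebra; the genuine content, and the only real obstacle, is the New Intersection Theorem itself, which I would invoke as a black box. That theorem is deep: it is due to Peskine--Szpiro in positive characteristic (via the Frobenius functor together with their Acyclicity Lemma; see \cite{Peskine_Szpiro_thesis}), follows in equal characteristic zero by the standard reduction to characteristic $p$, and was established in mixed characteristic by Roberts using local Chern characters and Dutta multiplicities on possibly singular schemes; a uniform argument valid in all characteristics is also available through the existence of big Cohen--Macaulay algebras.
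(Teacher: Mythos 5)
Your reduction is correct: base-changing a minimal free resolution of $M$ to $\bar R = R/\Ann(N)$, checking via Nakayama that the resulting complex is non-exact, and verifying that its homology $\Tor_i^R(M,\bar R)$ is supported in $\Supp(M)\cap\Supp(N)=\Supp(M\otimes_R N)\subseteq\{\m\}$, puts you exactly in the setting of the New Intersection Theorem over $\bar R$, which gives $\pd(M)\geq\dim\bar R=\dim N$. The paper states this result without proof, simply recalling it as a classical theorem of Peskine--Szpiro and Roberts, and your argument is the standard derivation of this form of the intersection theorem from the New Intersection Theorem, with the deep input correctly isolated in the black box you invoke.
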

An immediate well-known consequence is the following criterion for a noetherian local ring to be Cohen-Macaulay.
\begin{cor}\label{intersection cor}
    Let $R$ be a noetherian local ring. Then there exists a non-zero finitely generated $R$-module of finite length and finite projective dimension if and only if $R$ is Cohen-Macaulay. 
\end{cor}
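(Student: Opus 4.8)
The plan is to prove the two implications separately: the forward direction (existence $\Rightarrow$ Cohen--Macaulay) via the Intersection theorem together with the Auslander--Buchsbaum formula, and the converse via a system of parameters.

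For the converse, suppose $R$ is Cohen--Macaulay of dimension $d$. I would choose a system of parameters $x_1,\dots,x_d$ of $R$; since $R$ is Cohen--Macaulay, such a sequence is automatically a regular sequence. Setting $M := R/(x_1,\dots,x_d)$, the module $M$ is non-zero, has finite length (because $(x_1,\dots,x_d)$ is $\FM$-primary), and has finite projective dimension, since the Koszul complex $K(x_1,\dots,x_d)$ is then a finite free resolution of $M$, giving $\pd(M) = d$. This half is routine and presents no real difficulty.

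For the forward direction, suppose $M$ is a non-zero finitely generated $R$-module with $\ell(M) < \infty$ and $\pd(M) < \infty$. I would apply the Intersection theorem with $N = R$: since $M \otimes_R R \cong M$ has finite length, it yields $\dim R = \dim N \leq \pd(M) < \infty$. Separately, $\ell(M) < \infty$ forces $\FM \in \Ass(M)$, so $\operatorname{depth}(M) = 0$, and the Auslander--Buchsbaum formula $\pd(M) + \operatorname{depth}(M) = \operatorname{depth}(R)$ gives $\pd(M) = \operatorname{depth}(R)$. Combining the two, $\dim R \leq \operatorname{depth}(R)$, and since $\operatorname{depth}(R) \leq \dim R$ always holds, we conclude $\operatorname{depth}(R) = \dim R$, i.e.\ $R$ is Cohen--Macaulay.

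The main (and essentially only) point requiring care is that the Intersection theorem is genuinely applicable here --- it is, taking $N = R$, which is a non-zero finitely generated module over itself --- and that the Auslander--Buchsbaum formula is available, which is guaranteed precisely by the hypothesis $\pd(M) < \infty$. No Artin--Rees or Tate-resolution input from the rest of the paper is needed; everything else is bookkeeping.
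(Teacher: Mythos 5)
Your proof is correct and is exactly the standard argument the paper has in mind: the paper states this corollary without proof as ``an immediate well-known consequence'' of the Intersection theorem, and your forward direction (Intersection theorem with $N=R$ plus Auslander--Buchsbaum) together with the converse via a regular system of parameters and the Koszul complex is the intended route.
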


\subsection*{Filtrations of ideals and asymptotic behaviour of projective dimension}\label{efpd-subsection}
\begin{defn}
    A filtration $\{J_n\}$ of $I$, i.e., $I  \supseteq J_1 \supseteq \cdots  \supseteq J_{n-1} \supseteq J_n \supseteq \cdots$ where each $J_n$ is an ideal in $R$, is said to be equivalent to the $I$-adic filtration $\{I^k\}$ if for all $k \in \BN$, there exists $n$ such that $J_n \subseteq I^k$ and for all $n \in \BN$, there exists $k$ such that $I^k \subseteq J_n$.
\end{defn}
Equivalently, the filtration $\{J_n\}$ induces the $I$-adic topology.
\begin{eg}\label{equivalent filtration eg}
    \begin{enumerate}[(i)]
        \item For an ideal $I$ in $R$ with a generating set $\tildes$, we define $I^{[r]}_{\tildes} := (\tildes^r)$. Clearly $I^{[r]}_{\tildes} \subseteq I^r$ and $I^{rd} \subseteq I^{[r]}_{\tildes}$ where $d$ denotes the number of elements in the sequence $\tildes$. Hence, the square power filtration $\{ I^{[n]}_{\tildes} \}$ is equivalent to the power filtration $\{I^k\}$.
        \item For a regular ring $R$ and an ideal $I \subseteq R$, the symbolic power filtration $\{I^{(n)}\}$ of $I$ is known to be equivalent to $\{I^k\}$ (\cite[Theorem 3.5]{JKV}). In fact, the hypothesis on $R$ can be further weakened (\cite[Proposition 4.7]{JKV}).
    \end{enumerate}
\end{eg}
\begin{rmk}
    Let $I$ be an ideal of $R$ and $\tildes$ a generating set of $I$. If $\ch(R) = p > 0$ is prime, then 
    $I^{[p^n]}_{\tildes}$ is the $n$-th Frobenius power of $I$ and hence is independent of the generating set $\tildes$. We denote it by $I^{[p^n]}$.
\end{rmk}

The next lemma follows directly from \cite[Theorem 1.7]{Peskine_Szpiro_thesis}.
\begin{lemma} \label{char p proj dim}
    Let $I\subseteq R$ be an ideal and $\ch(R) = p >0$, where $p$ is a prime. If $\pd(R/I) < \infty$ then $\pd(R/I) = \pd(R/I^{[p^n]})$ for every $n \in \BN$.
\end{lemma}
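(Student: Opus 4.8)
The plan is to identify the bracket power $I^{[p^{n}]}$ with the $n$-fold iterate of the Frobenius functor on $R/I$ and then quote \cite[Theorem~1.7]{Peskine_Szpiro_thesis}. Let $\varphi\colon R\to R$ be the Frobenius endomorphism $r\mapsto r^{p}$ and let $F$ denote the associated base change functor $F(M)=R\otimes_{\varphi}M$, where the outer copy of $R$ is viewed as an $R$-algebra via $\varphi$. Since $R$ has characteristic $p$, for any ideal $J=(x_{1},\dots,x_{m})$ one has $(\sum r_{i}x_{i})^{p}=\sum r_{i}^{p}x_{i}^{p}$, so the ideal generated by the $p$-th powers of all elements of $J$ equals $J^{[p]}:=(x_{1}^{p},\dots,x_{m}^{p})$; in particular this ideal is independent of the chosen generators and $F(R/J)=R/J^{[p]}$, whence $F^{n}(R/J)=R/J^{[p^{n}]}$ by iteration. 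I would reduce to $n=1$: once $\pd(R/I^{[p]})=\pd(R/I)<\infty$ is known, applying that case to the ideal $I^{[p]}$ and using $I^{[p^{n}]}=(I^{[p]})^{[p^{n-1}]}$ finishes by induction on $n$. Since projective dimension can be read off prime by prime and $(I^{[p]})_{\FP}=(I_{\FP})^{[p]}$, I would also reduce to the case where $(R,\m)$ is local.

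Now fix a \emph{minimal} free resolution $F_{\bullet}\colon 0\to F_{c}\to\cdots\to F_{0}\to R/I\to 0$ with $c=\pd(R/I)<\infty$, so that the matrices of all the differentials have entries in $\m$. The one substantial input, which is \cite[Theorem~1.7]{Peskine_Szpiro_thesis} (ultimately an application of the acyclicity lemma), is that because $R/I$ has finite projective dimension, applying $F$ termwise to $F_{\bullet}$ yields an \emph{exact} complex $F(F_{\bullet})$. As $F$ carries a free module to a free module of the same rank, $F(F_{\bullet})$ is then a finite free resolution of $F(R/I)=R/I^{[p]}$, so $\pd(R/I^{[p]})\le c$. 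For the reverse inequality I would observe that $F$ preserves minimality: it replaces each matrix entry $a$ of a differential by $a^{p}$, and $a\in\m$ forces $a^{p}\in\m$. Hence $F(F_{\bullet})$ is again a minimal free complex, and being exact it is \emph{the} minimal free resolution of $R/I^{[p]}$; since $F(F_{c})$ is free of the same positive rank as $F_{c}$, this resolution has length exactly $c$. Therefore $\pd(R/I^{[p]})=c=\pd(R/I)$ (indeed every Betti number is preserved).

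The only genuinely nontrivial step is the exactness of $F(F_{\bullet})$, which is precisely the content of \cite[Theorem~1.7]{Peskine_Szpiro_thesis}; the remaining ingredients — the identification $F(R/J)=R/J^{[p]}$, compatibility of $F$ with localization, and the fact that $F$ sends minimal resolutions to minimal complexes — are routine, and the reductions to $n=1$ and to the local case are purely formal.
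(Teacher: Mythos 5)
Your proposal is correct and takes essentially the same route as the paper, which simply deduces the lemma as a direct consequence of \cite[Th\'eor\`eme 1.7]{Peskine_Szpiro_thesis}; your write-up just fills in the routine reductions (to $n=1$ and to the local case) and the standard facts about the Frobenius functor and minimal resolutions surrounding that citation.
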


\begin{defn} \label{def-efpd}
Let $I \subseteq R$ be an ideal.
    The ideal $I$ is said to have eventually finite projective dimension (efpd) if there is a filtration $\{J_n\}$ of $I$ equivalent to $\{I^k\}$ such that $\pd(R/J_n) < \infty$ for all $n \in \BN$. 
\end{defn}

\begin{rmk}\label{propertiesefpd}
    If $I, J \subseteq R$ are ideals such that $\sqrt{I} = \sqrt{J}$, then $I$ has efpd if and only if $J$ has efpd. 
\end{rmk}

We discuss a few classes of ideals which have eventually finite projective dimension.
\begin{eg}\label{efpd example}
    \begin{enumerate}
        \item When $R$ is a regular ring, every ideal has finite projective dimension and hence every ideal has efpd. 
        \item Let $\ch(R) = p >0$, where $p$ is a prime and $\pd(R/I)< \infty$. Then by Lemma \ref{char p proj dim}, $\pd(R/I) = \pd(R/I^{[p^n]})$ for all $n \in \BN$. Hence, it follows from Example \ref{equivalent filtration eg}(ii) that $I$ has efpd.
        \item \label{CI eg} Let $I$ be a set-theoretic complete intersection ideal, i.e., there exists a regular sequence $a_1,\dots,a_n$ such that $\sqrt{(a_1,\dots,a_n)} = \sqrt{I}$. Since $a_1^k,\dots,a_n^k$ is also a regular sequence for each $k \in \BN$, the Koszul complex $K(a_1^k,\dots,a_n^k)$ is a free resolution of $R/(a_1^k,\dots,a_n^k)$. Thus $\pd(R/(a_1^k,\dots,a_n^k)) < \infty$ for all $k \in \BN$. Hence every set-theoretic complete intersection ideal has efpd.
        \item As a special case of (\ref{CI eg}), the maximal ideal in a Cohen-Macaulay local ring has efpd. 
        \item Every nilpotent ideal has efpd, since the filtration $\{I^n\}$ eventually becomes zero.
        \item Let $R$ be an artinian ring with $Maxspec(R) =  \{\m_1,\dots,\m_t\}$ for some $t \geq 1$. Then there exists $n \in \BN$ such that $R \cong R/\m_{1}^{n} \times \cdots \times R/\m_{t}^{n}$ and  $(\m_{1} \cdots \m_{t})^{n} = 0$. We claim that every ideal $I \subseteq R$ has efpd. Without loss of generality, we may assume $I$ is a radical ideal. Hence $I = \prod_{i \in \Lambda} \m_{i}$ for some subset $\Lambda \subseteq [t]$. If $R$ is a local ring, then $I$ is the maximal ideal which is nilpotent. Hence $I$ has efpd. Suppose there are more than one maximal ideal in $R$. Then we may assume $\Lambda \subset [t]$ to be any non-empty proper subset. Then for all $k \geq n$, $$I^k = \prod_{i \in \Lambda} \m_{i}^{k} \cong \prod_{j \in [t]\backslash \Lambda} R/\m_{j}^{k} \cong \prod_{j \in [t]\backslash \Lambda} R/\m_{j}^{n}$$ is a projective $R$-module. Hence $I$ has efpd. 
        \item Let $f: R \to S$ be a flat ring homomorphism and $I \subseteq R$ has efpd. Then there exists a filtration $\{J_n\}$ of $I$ such that $\pd_R(R/J_n) < \infty$ and $\{J_n\}$ is equivalent to $\{I^k\}$. Hence $\pd_S(S/(J_nS ))<\infty$ and $\{J_nS\}$ is equivalent to $\{{(IS)}^k\}$. Thus $IS \subseteq S$ has efpd. 
        \item \label{efpd localizes} If $I \subseteq R$ has efpd, then $I_{\cp} \subseteq R_{\cp}$ has efpd for all $\cp \in Spec(R)$, since localisation is flat.
        \item\label{Dutta Hochster Mclaughlin eg} 
        Let $R = k[x_1,x_2,x_3,x_4]_{\tiny \m}/(x_1x_4 - x_2x_3 )$ where $k = \BZ/p\BZ$ and $\m = (x_1,x_2,x_3,x_4)$. The construction of Dutta-Hochster-McLaughlin\cite{Dutta_Hochster_Mclaughlin} shows the existence of an $R$-module $M$ of finite length and finite projective dimension such that the intersection multiplicity $\chi_R(M,N) < 0$ for some module $N$. The existence of a flat local ring homomorphism from a regular local ring to $R$ would imply that $\chi_R(M,N) \geq 0$. Therefore there is no flat local ring homomorphism from any regular local ring to $R$. Since $R$ has a module of finite length and finite projective dimension, Corollary \ref{intersection cor} implies that $R$ is a Cohen-Macaulay local ring. Thus $\m R \subseteq R$ has efpd. Thus we have an example of an ideal having efpd, which is not extended from a regular local ring under a flat map.
        \end{enumerate}
\end{eg}
\indent Later in Section 4, we show that a maximal ideal in a noetherian local ring has efpd if and only if the ring is Cohen-Macaulay. We also give a necessary condition for arbitrary ideals to have eventually finite projective dimension. 
\begin{eg}\label{efpd non-example}
    Let $R = k[X,Y]/(XY)$. The ideal $(X)$ in $R$ does not have efpd.
\end{eg}

\subsection*{Subcategories of mod(R)}
We denote by $\modr$ the category of finitely generated $R$-modules.

\begin{defn}\label{serresub defn}
\begin{enumerate}
    \item A Serre subcategory of $\modr$, denoted by $\L$, is a full subcategory of $\modr$ such that for any short exact sequence $0 \to M' \to M \to M'' \to 0$  in $\modr$, $M \in \L$ if and only if $M', M'' \in \L$.
    \item A set $V \subseteq \text{Spec}(R)$ is called a specialization closed set if it is a union of closed sets.
    \item Given a specialization closed set $V$, there is a corresponding Serre subcategory defined as $$\L_V = \{ M \in \modr \mid \\Supp(M) \subseteq V \}.$$
    \item \label{VL defn} Given a Serre subcategory $\L$, there is a corresponding specialization closed set defined as
    $$V_{\tiny \L} = \bigcup_{M \in {\tiny \L}} \Supp(M).$$
\end{enumerate}
    
\end{defn}
\begin{rmk}\label{serresub}\cite{Gabriel}
Clearly, $\L \subseteq \L_{V_{\tiny \L}}$. Suppose $M \in \L_{V_{\tiny \L}}$. Then, by definition, $$\Ass(M) \subseteq \Supp(M) \subseteq V_{\tiny \L} = \bigcup_{M' \in {\tiny \L}} \Supp(M').$$ Thus, for each $\cp \in \Ass(M)$, there exists a module $M' \in \L$ such that $\cp \in \Supp(M')$. Since $\L$ is a Serre subcategory, it follows that $R/{\cp} \in \L$, since it is a quotient of some associated prime of $M'$, which is in $\L$ since it is a submodule of $M'$. Using the existence of a finite filtration for $M$ whose quotients are $R/{\cp}$ where $\cp \in \Ass(M)$, we obtain that $M \in \L$ and hence that $\L = \L_{V_{\tiny \L}} $.
\end{rmk}

\begin{defn}
    A full subcategory $\CA \subseteq \modr$ is said to be resolving if 
    \begin{enumerate}
        \item $R$ is in $\CA$.
        \item $M \oplus N \in \CA$ if and only if $M$ and $N$ are in $\CA$. 
        \item If $0 \to M' \to M \to M''\to 0$ is an exact sequence and $M'' \in \CA$, then $M \in \CA$ if and only if $M' \in \CA$.
    \end{enumerate}
\end{defn}
As a consequence, the category of finitely generated projective $R$-modules $\CP$ is contained in any resolving subcategory. Some examples of resolving subcategories are $\CP, \modr$, the category of Gorenstein dimension zero modules (also called totally reflexive modules) and the category of maximal Cohen-Macaulay (MCM) modules in a Cohen-Macaulay local ring. Basic properties and statements about resolving subcategories may be found in \cite{Dao-Takahashi} and \cite{Sanders}. The collection of resolving subcategories of $\modr$ is classified in these articles, and in particular, it shows that $\modr$ has many resolving subcategories.

\begin{defn}
    A full subcategory $\T \subseteq \modr$ is a thick subcategory of $\modr$ if it is resolving and for any exact sequence in $\modr$, $$0 \to M' \to M \to M'' \to 0,$$  $M,M' \in \T$ implies $M'' \in \T$.
\end{defn}

\begin{rmk}
    Let $\Omega^i M$ denote the $i^{th}$ syzygy of $M$.
    For a resolving subcategory $\CA$, define the closure $\overline{\CA} = \{ M \in \modr \mid \Omega^{\gg 0} M \in \CA \}$.
    Then $\overline{\CA}$ is a thick subcategory of $\modr$.
\end{rmk}

\begin{eg} Here are some examples of thick subcategories of $\modr$ which arise as a closure of a resolving subcategory. 
    \begin{enumerate}
        \item $\xbar{\CP}$ is the category of finitely generated modules having finite projective dimension.
        \item The closure of Gorenstein dimension zero modules is the full subcategory of $\modr$ having finite Gorenstein dimension.
        \item When $R$ is a Cohen-Macaulay local ring, $\overline{MCM(R)} = \modr$.
    \end{enumerate}
\end{eg}

We fix the following notations:
    For $X_\bullet \in Ch^b(\modr)$,
    \begin{enumerate}
        \item $\min_{c}(X_{\bullet}) = \sup\{ n \mid P_{\bullet} = 0 \text{ for all } i <n  \}$
        \item $\min(X_\bullet) = \sup\{ n \mid H_i(X_{\bullet}) = 0 \text{ for all } i <n  \}$ 
        \item $\supph(X_{\bullet}) = \{ n \mid H_n(X_{\bullet}) \neq 0 \}$
        \item $\wid(X_\bullet) = \begin{cases}
            \sup\{ i-j \mid H_i(X_\bullet), H_j(X_\bullet) \neq 0 \} & \text{if\ } X_\bullet \text{\ is not acyclic} \\ 
            -\infty & \text{if\ } X_\bullet \text{\ is acyclic} 
        \end{cases} $
        \item $\width(X_\bullet) = \begin{cases}
            0 & \text{if\ } \wid(X_\bullet) = -\infty \\
            \wid(X_\bullet) & \text{otherwise}
        \end{cases}$
        \item $\Sigma X_\bullet$ denotes the shift of the complex $X_\bullet$, that is, $(\Sigma X_\bullet)_k = (X_\bullet)_{k+1}$ 
    \end{enumerate}

\subsection*{Preliminaries on \texorpdfstring{$K_0$}{K0} of certain categories}

In \cite{ChandaSane}, the stable range $\alpha(\L)$ of $K_0$ relative to a Serre subcategory $\L$ of $\modr$ and a computable invariant $\beta(\L) \geq \alpha(\L)$ were introduced and studied. 
We recall below the definitions and results about $\beta(\L)$ which are relevant to this article. Later in Section 3, we give bounds on $\beta(\L)$ in certain cases.
\begin{defn}
\begin{enumerate}
    \item Let $k \geq 1$ and $P_\bullet \in Ch^{[0,k]}_{\tiny \L}(\CP)$. A pair $(Q_\bullet,u)$ is said to be a reducer of $P_\bullet$ if $Q_\bullet \in Ch^{[0,k-1]}_{\tiny \L}(\CP)$ and $u: Q_\bullet \to P_\bullet$ is a chain complex map such that $u_0: Q_0 \to P_0$ is surjective.
    \item $\beta(\L) = \inf\{ m \in \BN \mid \text{every}\ P_\bullet \in Ch^{[0,k]}_{\tiny \L}(\CP) \ \text{has a reducer, for all}\ k>m \}$.
\end{enumerate}
\end{defn}
Recall the notation that $\tildes = s_1,\dots,s_d$. A careful observation and reformulation in terms of the dual of the chain complex of \cite[Proposition 23]{Foxby-Halvorsen} yields the next lemma.
\begin{lemma}\label{Foxby-Halvorsen}
    Let $I = (\tildes)$ be an ideal and $\L = \L_{V(I)}$. For $P_\bullet \in Ch^{-}_{\tiny \L}(\CP)$, there exists $r \in \BN$ and a chain complex map $\Psi : K(\tildes^r ; P_0) \to P_\bullet$ such that $\Psi_0$ is an isomorphism.
\end{lemma}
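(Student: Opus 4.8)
The plan is to build the chain map $\Psi$ recursively in homological degree, taking $\Psi_0=\mathrm{id}_{P_0}$ (which is already the isomorphism the statement asks for in degree $0$) and lifting it through the differentials of $P_\bullet$ one step at a time. First I would record the structural facts I will use. The complex $K(\tildes^r;P_0)$ is a complex of finitely generated projectives concentrated in degrees $0,\dots,d$ with $K_0(\tildes^r;P_0)=P_0$, and every entry of its differential is one of the elements $s_i^r$, so $\partial^K_k\bigl(K_k(\tildes^r;P_0)\bigr)\subseteq(\tildes^r)K_{k-1}(\tildes^r;P_0)\subseteq I^rK_{k-1}(\tildes^r;P_0)$ for every $k$. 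Writing $Z_m=\ker\partial^P_m$ and $B_m=\im\partial^P_{m+1}$, so that $H_m(P_\bullet)=Z_m/B_m$, I also note that $\partial^P_0=0$ and hence $Z_0=P_0$ (here $P_\bullet$ is supported in non-negative degrees).

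The two inputs that power the lifting are: (i) since $R$ is noetherian and each $H_m(P_\bullet)$ is a finitely generated module lying in $\L_{V(I)}$, there is $c_m\in\BN$ with $I^{c_m}Z_m\subseteq B_m$; and (ii) by the weak form of the Artin--Rees lemma applied to $Z_m\subseteq P_m$, there is $h_m\in\BN$ with $I^{t+h_m}P_m\cap Z_m\subseteq I^tZ_m$ for all $t\ge0$. Because $K(\tildes^r;P_0)$ has only the $d+1$ terms $K_0,\dots,K_d$, only finitely many of these constants, namely $c_0,\dots,c_{d-1}$ and $h_1,\dots,h_{d-1}$, can ever intervene, and I would fix $r:=\max\{c_0,\,c_1+h_1,\dots,c_{d-1}+h_{d-1}\}$ once and for all. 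Now the inductive step: assume $\Psi_0,\dots,\Psi_{k-1}$ have been constructed with $\partial^P_j\Psi_j=\Psi_{j-1}\partial^K_j$ for $j\le k-1$, and consider $\phi:=\Psi_{k-1}\partial^K_k\colon K_k\to P_{k-1}$. A short diagram chase (using $\partial^K_{k-1}\partial^K_k=0$ and the inductive relations, together with $Z_0=P_0$ when $k=1$) shows $\phi(K_k)\subseteq Z_{k-1}$, while the inclusion noted above for the Koszul differential gives $\phi(K_k)\subseteq I^rP_{k-1}$. Hence $\phi(K_k)\subseteq I^rP_{k-1}\cap Z_{k-1}$, which by the choice of $r$ --- via Artin--Rees when $k\ge2$, and directly ($I^rP_0\cap Z_0=I^rP_0\subseteq I^{c_0}P_0$) when $k=1$ --- is contained in $I^{c_{k-1}}Z_{k-1}\subseteq B_{k-1}=\im\partial^P_k$. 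Since $K_k$ is projective and $\partial^P_k\colon P_k\twoheadrightarrow B_{k-1}$, the map $\phi$ lifts to $\Psi_k\colon K_k\to P_k$ with $\partial^P_k\Psi_k=\phi$. Running this for $k=1,\dots,d$ yields $\Psi$, and $\Psi_0$ is an isomorphism by construction.

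The one genuinely nontrivial point is the containment $I^rP_{k-1}\cap Z_{k-1}\subseteq B_{k-1}$: this is exactly where the support hypothesis on the homology of $P_\bullet$ (through the exponents $c_m$) meets the Artin--Rees lemma (through the $h_m$), and it is the reason one must pass to a large common power $r$ of the $s_i$ rather than work with the $s_i$ themselves. Everything else is a routine lifting argument through projectives; in particular, since the recursion stops at degree $d$, the unboundedness of $P_\bullet$ is irrelevant and a single uniform $r$ suffices. After the dualization indicated just before the statement, this recovers the content of \cite[Proposition~23]{Foxby-Halvorsen}.
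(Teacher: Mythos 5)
Your proof is correct and complete. Note that the paper does not actually prove this lemma: it is quoted as a reformulation, via dualizing the chain complexes, of \cite[Proposition 23]{Foxby-Halvorsen}. So what you have done is reconstruct a direct, self-contained argument for the cited result, and the mechanism you use is precisely the one the paper relies on elsewhere: each $H_m(P_\bullet)$ is finitely generated with support in $V(I)$, hence killed by $I^{c_m}$, and the weak Artin--Rees containment $I^{t+h_m}P_m\cap Z_m\subseteq I^tZ_m$ upgrades ``$\phi(K_k)$ is a cycle lying in $I^rP_{k-1}$'' to ``$\phi(K_k)\subseteq B_{k-1}$'', after which projectivity of the Koszul terms gives the lift $\Psi_k$. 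Your choice $r=\max\{c_0,c_1+h_1,\dots,c_{d-1}+h_{d-1}\}$ is exactly what the two containments require, and you are right that only these finitely many constants can intervene because $K(\tildes^r;P_0)$ stops at degree $d$, so a single $r$ works regardless of how far $P_\bullet$ extends upward. One reading issue you implicitly (and correctly) resolved: since $\Psi_0$ is required to be an isomorphism, the chain-map condition in degree $0$ forces $\partial_0^P=0$, so the notation $Ch^{-}_{\tiny \L}(\CP)$ must be understood as complexes vanishing in negative degrees, which is consistent with both places the paper invokes the lemma (Lemmas \ref{betabound} and \ref{koszul trick}); your use of $Z_0=P_0$ in the base case depends on this normalization.
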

Lemma \ref{Foxby-Halvorsen} is crucially used in this form in \cite{SandersSane} and later in \cite{ChandaSane} to obtain the first example of a reducer and which also yields statement (2) of the next lemma.
\begin{lemma}\label{beta properties} \cite{ChandaSane}
Let $I = (\tildes)$ be an ideal and $\L = \L_{V(I)}$. 
    \begin{enumerate}
        \item When $\L \neq 0$, $\grade(I) \leq \beta(\L)$.
        \item $\beta(\L) \leq d$.
    \end{enumerate}
\end{lemma}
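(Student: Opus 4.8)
The plan is to treat the two inequalities separately.

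\medskip
\noindent\emph{For (2), the bound $\beta(\L)\le d$.} I would run the argument signposted just before Lemma~\ref{Foxby-Halvorsen}. Fix $k>d$ and $P_\bullet\in Ch^{[0,k]}_{\tiny \L}(\CP)$; viewing it as an object of $Ch^-_{\tiny \L}(\CP)$, Lemma~\ref{Foxby-Halvorsen} produces $r\in\BN$ and a chain map $\Psi\colon K(\tildes^r;P_0)\to P_\bullet$ with $\Psi_0$ an isomorphism. The key point is that $K(\tildes^r;P_0)$ already lies in $Ch^{[0,k-1]}_{\tiny \L}(\CP)$: it is concentrated in homological degrees $0,\dots,d$ with $d\le k-1$; each term $K_i(\tildes^r)\otimes_R P_0\cong P_0^{\oplus\binom{d}{i}}$ is projective; and each $H_i(K(\tildes^r;P_0))$ is annihilated by $(\tildes^r)$, hence supported on $V((\tildes^r))=V(I)$, hence lies in $\L$. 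Since $\Psi_0$ is in particular surjective, $(K(\tildes^r;P_0),\Psi)$ is a reducer of $P_\bullet$; as $k>d$ was arbitrary, $\beta(\L)\le d$.

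\medskip
\noindent\emph{For (1), the bound $\grade(I)\le\beta(\L)$.} I would argue by contradiction. Set $g=\grade(I)$; since $\beta(\L)\ge 0$ we may assume $g\ge 1$, and suppose $\beta(\L)\le g-1$. First I would establish an \emph{acyclicity claim}: every $Q_\bullet\in Ch^{[0,j]}_{\tiny \L}(\CP)$ with $j<g$ is acyclic, hence (being a bounded complex of projectives) contractible. To prove it, suppose some such $Q_\bullet$ is not acyclic and choose a minimal prime $\cp$ of the nonempty closed set $\bigcup_i\Supp H_i(Q_\bullet)$. Then $(Q_\bullet)_\cp$ is a bounded complex of finitely generated free $R_\cp$-modules of length $\le j$; by minimality of $\cp$ all of its homology modules have finite length over $R_\cp$, and at least one is nonzero. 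The New Intersection Theorem then gives $j\ge\dim R_\cp$; but $H_i(Q_\bullet)\in\L_{V(I)}$ forces $\cp\supseteq I$, so $\dim R_\cp=\mathrm{ht}(\cp)\ge\mathrm{ht}(I)\ge\grade(I)=g$, contradicting $j<g$.

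Granting the claim, I would finish as follows. The Koszul complex $K(\tildes)$ lies in $Ch^{[0,d]}_{\tiny \L}(\CP)$ because its homology is killed by $I$. Applying the hypothesis $\beta(\L)\le g-1$ repeatedly (each application reducing the length by one, as long as it exceeds $g-1$) and composing the resulting reducers, I obtain $Q_\bullet\in Ch^{[0,g-1]}_{\tiny \L}(\CP)$ and a chain map $\phi\colon Q_\bullet\to K(\tildes)$ with $\phi_0$ surjective (if $d\le g-1$ already, take $Q_\bullet=K(\tildes)$, $\phi=\mathrm{id}$). By the claim $Q_\bullet$ is contractible, so $\phi$ is null-homotopic: $\phi_0=\partial_1^{K(\tildes)}h_0+h_{-1}\partial_0^{Q_\bullet}$ for some homotopy $h$, and since $Q_{-1}=0$ the second term vanishes, giving $\im(\phi_0)\subseteq\im(\partial_1^{K(\tildes)})=(s_1,\dots,s_d)=I$. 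As $\phi_0$ maps onto $K_0(\tildes)=R$, this forces $I=R$, i.e. $H_0(K(\tildes))=R/I=0$ and $\L=\L_{V(I)}=0$, contrary to $\L\ne 0$. Hence $\beta(\L)\ge g=\grade(I)$.

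\medskip
\noindent\emph{Main obstacle.} It is the acyclicity claim. Detecting that a bounded complex of projectives of length $<\grade(I)$ with $V(I)$-supported homology must be acyclic genuinely requires the complex form of the intersection theorem (the New Intersection Theorem of Peskine--Szpiro, Hochster and Roberts); the plain module statement recalled above only handles the case in which the localized complex is a resolution of a single finite-length module, i.e. homology in one bottom degree, and does not seem to suffice for a general complex. Everything else --- invoking Lemma~\ref{Foxby-Halvorsen}, iterating reducers, and the homotopy bookkeeping --- is formal.
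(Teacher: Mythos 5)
Your proof is correct. For (2) you follow exactly the route the paper signposts: Lemma~\ref{Foxby-Halvorsen} hands you the Koszul complex $K(\tildes^r;P_0)$ as a reducer, and the verification that it lies in $Ch^{[0,k-1]}_{\tiny \L}(\CP)$ is as you say. For (1) the paper imports the statement from \cite{ChandaSane} without proof, so the comparison is with your self-contained argument, which is sound: the iteration of reducers down to length $g-1$, the contractibility of a bounded acyclic complex of projectives, and the null-homotopy forcing $\im(\phi_0)\subseteq I$ are all correct, and the degenerate case $d\le g-1$ is vacuous anyway since $\grade(I)\le\mathrm{ht}(I)\le d$ for proper $I$. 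The one place where you reach for a heavier tool than necessary is the acyclicity claim. You do not need the New Intersection Theorem: the Peskine--Szpiro acyclicity lemma applies directly, since each nonzero $Q_i$ is a direct summand of a finite free module and hence $\grade(I,Q_i)\ge\grade(I)=g>j\ge i$, while each nonzero $H_i(Q_\bullet)$ has all associated primes containing $I$ and hence $\grade(I,H_i(Q_\bullet))=0$; this kills $H_i$ for $i\ge 1$. For $H_0(Q_\bullet)=M$ one then has $\pd M\le j<g$, whereas $\Supp(M)\subseteq V(I)$ gives $I^n\subseteq\Ann(M)$ and so $\grade(\Ann M)\ge\grade(I)=g$; combined with the classical inequality $\grade(\Ann M)\le\pd M$ for nonzero $M$ of finite projective dimension (because $\Ext^{\grade(\Ann M)}_R(M,R)\ne 0$), this forces $M=0$. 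So the claim, and with it part (1), follows from entirely elementary depth arguments; your NIT route is valid but invokes a much deeper theorem than the statement requires.
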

A direct consequence of \cite[Theorem 6.3]{ChandaSane} is stated below.
\begin{lemma}\label{grade-beta equality}
    If $\grade(I) = \beta(\L)$ then $K_0(\xbar \CP \cap \L) \cong K_0(\DbLP)$.
\end{lemma}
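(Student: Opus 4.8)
The plan is to read this off from \cite[Theorem 6.3]{ChandaSane}, so the work consists of setting up the relevant map and checking that the hypothesis of that theorem is met here. First I would recall that the exact functor $D^b(\xbar\CP\cap\L) \rightsquigarrow \DbLP$ of the introduction (replace a complex by a projective resolution), combined with the standard identification of $K_0$ of the bounded derived category of an exact category with $K_0$ of the exact category itself, induces a natural homomorphism
$$\theta\colon K_0(\xbar\CP\cap\L) \longrightarrow K_0(\DbLP);$$
here $\xbar\CP\cap\L$ is exact because both $\xbar\CP$ (a thick subcategory) and $\L$ (Serre) are closed under extensions in $\modr$. Theorem 6.3 of \cite{ChandaSane} supplies a criterion, phrased in terms of $\grade(I)$ together with the invariants $\alpha(\L)$ and $\beta(\L)$, under which $\theta$ is an isomorphism, the operative condition being that these invariants attain their least possible value, namely $\grade(I)$.

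The only substantive point is then the inequality chain $\grade(I)\le\alpha(\L)\le\beta(\L)$: the right-hand inequality is part of the definition of $\beta$, and the left-hand one is the $\alpha$-analogue of Lemma \ref{beta properties}(1), also established in \cite{ChandaSane} (it reflects the fact that an object of $\DbLP$ cannot in general be represented by a projective complex supported in fewer than $\grade(I)$ degrees). Under the hypothesis $\grade(I)=\beta(\L)$ this chain collapses to $\grade(I)=\alpha(\L)=\beta(\L)$, so the hypothesis of \cite[Theorem 6.3]{ChandaSane} holds for $\L$ and $\theta$ is an isomorphism; in particular $K_0(\xbar\CP\cap\L)\cong K_0(\DbLP)$.

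I do not expect a genuine obstacle, which is why the statement is labelled ``a direct consequence'': the only care needed is bookkeeping, namely confirming that the categories and functor above, and the invariants $\alpha(\L),\beta(\L),\grade(I)$, match those appearing in Theorem 6.3 of loc. cit. If instead a self-contained argument were wanted, one could reprove the needed part of that theorem by a reducer/d\'evissage argument: when $\beta(\L)=\grade(I)$, Lemma \ref{Foxby-Halvorsen} supplies an initial reducer and the reducer property the subsequent ones, so every $P_\bullet\in\DbLP$ can be replaced, without changing its class in $K_0(\DbLP)$, by a complex concentrated in degrees $[0,\grade(I)]$; such complexes reduce, modulo the $K_0$-relations, to combinations of shifted projective resolutions of modules in $\xbar\CP\cap\L$, which gives surjectivity of $\theta$, while checking that the reduction is compatible with the Euler-characteristic relations defining $K_0(\xbar\CP\cap\L)$ gives injectivity.
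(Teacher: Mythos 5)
Your proposal matches the paper's treatment: the paper gives no argument beyond stating that the lemma is ``a direct consequence of \cite[Theorem 6.3]{ChandaSane}'', and your route --- constructing the natural map $\theta$ and checking via $\grade(I)\le\alpha(\L)\le\beta(\L)$ that the hypothesis of that theorem collapses to the assumed equality --- is exactly the intended reading of that citation. The extra dévissage sketch is not needed but is consistent with how the invariants $\alpha$ and $\beta$ are used elsewhere in the paper.
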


\begin{rmk}
We recall that $\beta(\L_{V(I)})$ is bounded above by $\limsup \left\{ \pd(R/I^{[r]}_{\tildes}) \mid n \in \BN \right\}$ as proved in \cite[Theorem 5.1]{ChandaSane}. We note that the same proof shows that the bound can be obtained with any filtration $\{J_n\}$ equivalent to  $\{I^k\}$, i.e.,
    $$\beta(\L_{V(I)}) \leq \limsup \left\{ \pd(R/J_n) \mid n \in \BN \right\} .$$
    Note that the RHS of the above inequality is also an upper bound for the non-vanishing of the local cohomology $H^i_I(M)$ for an $R$-module $M$, since it is the direct limit of $\Ext^i_R(R/J_n,M)$.
\end{rmk}

\subsection*{Notations for further use}
\label{notations}
We collect below some notations that will be used in the rest of the article.
\begin{itemize}
    \item $\tildei = \{ i_1,\dots,i_m \} \in \P$ with $i_j < i_{j+1}$. \\
    \item $\ei$ := $e_{i_1} \wedge  \dots \wedge e_{i_m}$. \\
    \item $P_{\tildei}$ : the $R$-linear map $\bigoplus_{\tildei \in \tiny \P} M\ei \to M$ where $\sum_{\tildei \in \tiny{\P}} c_{\tildei} \ei$ maps to $c_{\tildei}$.\\
    \item $\tildei \sqcup k$ : the $(m+1)$-ordered subset containing $\tildei$ and $k$ where $k \notin \tildei$. \\
    \item $\kappa^{n,k}$ : the chain complex map from $K(\tildes^{n};M)$ to $K(\tildes^k;M)$ where $\kappa^{n,k}_m$ maps $\ei$ to $(s_{i_1}\cdots s_{i_m})^{n-k} \ei$ for all $\tildei \in \P$ and $m \in [d]$ where $n \geq k >0$.\\
    \item $l$ : the common stabilization point for the chain of the submodules $(0:_M s_i^t)$ (defined in notations~\ref{notations1}).
\end{itemize}
Note that for $n \geq t \geq k$, $\kappa^{n,k} = \kappa^{t,k} \circ \kappa^{n,t}$. When $M=R$, $\kappa^{n,k}$ is a map of dg algebras.

\section{Factoring $\kappa$ through the Tate resolution}\label{section main thm}
In this section, we define the Tate resolution and prove the main theorem about the existence of a map from a suitable Tate resolution to the Koszul complex as mentioned in the introduction. We begin with a lemma about the vanishing of maps on Koszul homologies.
\begin{lemma}\label{homology map is zero}
    There exists $h \in \BN$ such that for all $r \geq 1$, the map  $$H_i(\kappa^{h+rd,r}) : H_i(K(\tildes^{h+rd};M)) \longrightarrow H_i(K(\tildes^r;M))$$ is zero for all $i \geq 1$. For $i=0$, the map is the natural surjection $\displaystyle{M/({\tildes}^{h+rd})M \to M/({\tildes^r})M}$.
\end{lemma}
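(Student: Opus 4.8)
The plan is to reduce the vanishing of the Koszul homology maps to the vanishing of maps on $\Tor$ modules, which is supplied by Proposition \ref{andre-homology}. First I would recall the standard comparison between Koszul homology and $\Tor$: for the ideal $I = (\tildes)$ and any $r$, there is a natural identification $H_i(K(\tildes^r; M)) \cong \Tor_i^R(R/(\tildes^r), M)$ when $\tildes^r$ is a regular sequence — but in general $\tildes^r$ need not be regular, so instead I would use the fact that $H_i(K(\tildes^r;M))$ is a subquotient of a fixed exterior power, and that the transition maps $\kappa^{n,r}$ are compatible with the maps $R/(\tildes^n) \twoheadrightarrow R/(\tildes^r)$. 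The cleanest route is via the first Koszul homology / the stable annihilator submodules: one knows $H_0(K(\tildes^r;M)) = M/(\tildes^r)M$ and the top homology $H_d(K(\tildes^r;M)) = (0 :_M (s_1 \cdots s_d)^r)$, and the map $\kappa^{n,r}$ in top degree is multiplication by $(s_1\cdots s_d)^{n-r}$. For intermediate degrees, I expect one reduces to these extremal cases by the usual device of splitting off one variable at a time, writing $K(\tildes^r) = K(s_1^r) \otimes K(s_2^r,\dots,s_d^r)$ and using the long exact sequence of the mapping cone, inducting on $d$.

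The key quantitative input is the choice of $h$. By Proposition \ref{andre-homology} applied to $M$ and the ideal $I = (\tildes)$, there is an $h(i)$ for each $i$ so that $\Tor_i^R(R/I^{r+h(i)}, M) \to \Tor_i^R(R/I^r, M)$ vanishes; taking $h = \max_i h(i)$ (a finite max, since $\Tor_i$ vanishes for $i > $ some bound once we also incorporate that $H_i(K(\tildes^r;M)) = 0$ for $i > d$) gives a uniform $h$. The subtlety is that the Koszul complex sees the filtration $\{(\tildes^r)\}$, i.e. the square-power filtration $\{I^{[r]}_{\tildes}\}$, not the $I$-adic filtration $\{I^r\}$; but by Example \ref{equivalent filtration eg}(i) these are equivalent, with $I^{rd} \subseteq I^{[r]}_{\tildes} \subseteq I^r$. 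This is exactly why the exponent $h + rd$ (and not $h + r$) appears in the statement: one needs $I^{h+rd} \subseteq (\tildes^{r+h'})$-type containments so that the $\Tor$ vanishing over the $I$-adic filtration can be transported to the square-power filtration. So I would choose $h$ from the Artin-Rees constants, then chase the containments $(\tildes^{h+rd}) \subseteq I^{h+rd} \subseteq$ (something past the Artin-Rees threshold relative to $(\tildes^r)$) to force the relevant subquotient map to land in the right place.

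Concretely, using Lemma \ref{Lem-Syz-Tor}, the map on $H_i$ is modeled by a map $(J' P_{i-1} \cap B_{i-1})/J' B_{i-1} \to (J P_{i-1} \cap B_{i-1})/J B_{i-1}$ for the ideals $J' = (\tildes^{h+rd})$ and $J = (\tildes^r)$ (for a free resolution of $M$), and I would show this is zero by verifying $J' P_{i-1} \cap B_{i-1} \subseteq J B_{i-1}$ via the weak Artin-Rees inclusion $I^{r'+h'} P_{i-1} \cap B_{i-1} \subseteq I^{r'} B_{i-1}$ together with the filtration comparison. The $i=0$ statement is immediate since $H_0(\kappa^{h+rd,r})$ is literally the reduction map $M/(\tildes^{h+rd})M \to M/(\tildes^r)M$ induced by the surjection of the cyclic modules. \textbf{The main obstacle} I anticipate is bookkeeping the precise relationship between Koszul homology and $\Tor$ in the non-regular case and making sure the exponent arithmetic ($h + rd$ vs. the various Artin-Rees thresholds $h(i)$, and the factor $d$ coming from $I^{rd}\subseteq I^{[r]}_{\tildes}$) lines up uniformly in both $r$ and $i$; the induction on $d$ via mapping cones should handle the structural part, but tracking constants through that induction is where care is needed.
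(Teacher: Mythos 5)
There is a genuine gap, and you half-acknowledge it yourself. Everything in your plan funnels through the identification of $H_i(K(\tildes^r;M))$ with a Tor module: your ``concrete'' step invokes Lemma \ref{Lem-Syz-Tor} to model $H_i(\kappa^{h+rd,r})$ by the map $(J'P_{i-1}\cap B_{i-1})/J'B_{i-1} \to (JP_{i-1}\cap B_{i-1})/JB_{i-1}$ for a free resolution $P_\bullet$ of $M$ over $R$. But that subquotient computes $\Tor_i^R(R/(\tildes^r),M)$, not $H_i(K(\tildes^r;M))$; the two coincide only when $\tildes^r$ is a regular sequence, which is exactly the hypothesis the lemma must avoid. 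In general there is only a natural map $H_i(K(\tildes^r;M)) \to \Tor_i^R(R/(\tildes^r),M)$, and vanishing of the transition maps on the targets says nothing about vanishing on the sources. Indeed, the paper's Proposition \ref{koszul-tor vanishing} shows that deducing Koszul vanishing from Tor vanishing over $R$ requires the map $\phi$ of Theorem \ref{chainmapexists}, which itself rests on this lemma --- so that route is circular. Your fallback, inducting on $d$ via $K(\tildes^r) = K(s_1^r)\otimes K(s_2^r,\dots,s_d^r)$ and mapping-cone long exact sequences, is a classically viable alternative (the inverse system of positive-degree Koszul homologies is pro-zero), but you have not carried it out: one must track the transition maps $\kappa^{n,r}$ through the connecting homomorphisms and the multiplication-by-$s_1^{n-r}$ maps, and it is not clear that this yields the uniform, explicit exponent $h+rd$ claimed in the statement rather than an unquantified ``$n\gg r$''. (Also, a small slip: the top homology is $(0:_M(\tildes^r)) = \bigcap_i(0:_M s_i^r)$, not $(0:_M(s_1\cdots s_d)^r)$.)

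The paper's proof sidesteps the non-regularity issue with one clean move that your proposal is missing: pass to the polynomial ring $S = R[X_1,\dots,X_d]$ with $X_i \mapsto s_i$. Over $S$ the sequence $X_1^n,\dots,X_d^n$ \emph{is} regular, so $K(X_1^n,\dots,X_d^n)$ is a free resolution of $S/(X_1^n,\dots,X_d^n)$ and its homology with coefficients in $M$ (viewed as an $S$-module) is literally $\Tor_i^S(S/(X_1^n,\dots,X_d^n),M)$, while the complex itself base-changes to $K(\tildes^n;M)$. The surjection $S/(X_1^{h+rd},\dots,X_d^{h+rd}) \twoheadrightarrow S/(X_1^r,\dots,X_d^r)$ factors through $S/(X_1,\dots,X_d)^{h+rd} \twoheadrightarrow S/(X_1,\dots,X_d)^{rd}$ --- this is where your (correct) observation about the square-power versus adic filtrations and the factor $d$ enters --- and then Proposition \ref{andre-homology}, applied over $S$ to the maximal-degree range $1\le i\le d$, supplies the uniform $h$. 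If you replace your Lemma \ref{Lem-Syz-Tor} step with this change of rings, the rest of your outline goes through as stated.
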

\begin{proof}
    Let $S = R[X_1,\dots, X_d]$ be the polynomial ring and $f: S \to R$ the ring map mapping $X_i$ to $s_i$. We denote the $S$-module structure on $M$ through $f$ by $\prescript{}{S}{M}$.   Then for all $h \geq 0$, we have the following commutative diagram : 
    $$\xymatrix{  \Tor_i^S(S/(X_1^{h+rd},\dots,X_d^{h+rd}),\prescript{}{S}{M}) \ar[r]^{\cong} \ar[d]^{} & H_i(K(X_1^{h+rd},\dots,X_d^{h+rd};\prescript{}{S}{M})) \ar[r]^-{\cong} \ar[d]^{}  & H_i(K(\tildes^{h+rd};M)) \ar[d]^{} \\  \Tor_i^S(S/(X_1^r,\dots,X_d^r),\prescript{}{S}{M}) \ar[r]^{\cong} & H_i(K(X_1^r,\dots,X_d^r;\prescript{}{S}{M})) \ar[r]^-\cong  & H_i(K(\tildes^r;M))   }$$
    Since the Koszul complex is concentrated in degrees zero to $d$, it is sufficient to prove that for all $1 \leq i \leq d$, there exists $h \geq 0$ such that
    \begin{equation}\label{tor-map}
    \Tor_i^S(S/(X_1^{h+rd},\dots,X_d^{h+rd}),\prescript{}{S}{M}) \to \Tor_i^S(S/(X_1^r,\dots,X_d^r),\prescript{}{S}{M})
    \end{equation}
     is the zero map. Since the natural surjection $S/(X_1^{h+rd},\dots,X_d^{h+rd}) \twoheadrightarrow S/(X_1^r,\dots,X_d^r)$ factors through $S/(X_1,\dots,X_d)^{h+rd} \twoheadrightarrow S/(X_1,\dots,X_d)^{rd}$, the map in \eqref{tor-map} is zero if the natural map $\Tor_i^S(S/(X_1,\dots,X_d)^{h+rd},\prescript{}{S}{M}) \to \Tor_i^S(S/(X_1,\dots,X_d)^{rd},\prescript{}{S}{M})$ is zero. Using Proposition \ref{andre-homology} with the exponent $rd$, there exists $h(i) \in \BN$, for each $1 \leq i \leq d$, such that for all $r \geq 1$, the map 
\begin{equation*}
    \Tor_i^S(S/(X_1,\dots,X_d)^{h(i)+rd},\prescript{}{S}{M}) \to \Tor_i^S(S/(X_1,\dots,X_d)^{rd},\prescript{}{S}{M})
\end{equation*}
 is zero. Choosing $h = \max\{h(i) \mid 1 \leq i \leq d \}$, we get that
 \begin{equation*}
    \Tor_i^S(S/(X_1,\dots,X_d)^{h+rd},\prescript{}{S}{M}) \to \Tor_i^S(S/(X_1,\dots,X_d)^{rd},\prescript{}{S}{M})
\end{equation*}
 is the zero map for all $1 \leq i \leq d$. 
\end{proof}
We fix the integer $h$ (which depends on $M$ and $\tildes$) obtained above for the rest of this section, i.e., for all $r \geq 1$,
    \begin{equation*}
    H_i(\kappa^{h+rd,r}): H_i(K(\tildes^{h+rd};M)) \longrightarrow H_i(K(\tildes^r;M))
\end{equation*}
 is the zero map for all $1 \leq i \leq d$. The following technical lemma about the divisibility of elements is used for our induction statement in Theorem \ref{chainmapexists}. 

\begin{lemma}\label{T2}
 Given $n \geq 1$, $r \geq 1$, there exists $v := q(n,r) = h+(n+r)d$ such that for all  $m \geq 1$ and $x \in \ker\left(\partial_m^{K({\tildes}^v ; M)}\right)$, there exists $y \in K_{m+1}(\tildes^r ; M)$ such that :
 \begin{enumerate}[(a)]
     \item $\partial_{m+1}^{K(\tildes^r;M)}(y) = \kappa^{v,r}_m(x)$.
     \item $P_{\tildej}(y)$ is divisible by $\prod_{j \in \tildej} s_j^{n}$ for all $\tildej \in \Pmm$.
 \end{enumerate} 
\end{lemma}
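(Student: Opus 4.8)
The plan is to realize $\kappa^{v,r}_m(x)$ as a boundary in two descent steps: first push $x$ down to an intermediate Koszul complex on which Lemma~\ref{homology map is zero} directly applies, and then push the resulting preimage down once more to level $r$, this last step supplying the divisibility required in (b) automatically. Concretely, I would set $v' := n+r$, so that $v = q(n,r) = h + v'd$ and $v \ge v' \ge r$ (using $d \ge 1$ and $n \ge 1$).

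First I would observe that, since $\kappa^{v,v'}$ is a chain map and $x$ is a cycle, $\kappa^{v,v'}_m(x)$ lies in $\ker\bigl(\partial_m^{K(\tildes^{v'};M)}\bigr)$. Because $v = h + v'd$ and $v' \ge 1$, Lemma~\ref{homology map is zero} (applied with the exponent $v'$ in place of $r$) gives $H_m(\kappa^{v,v'}) = 0$ for every $m \ge 1$; hence the cycle $\kappa^{v,v'}_m(x)$ is a boundary, so there is some $z \in K_{m+1}(\tildes^{v'};M)$ with $\partial_{m+1}^{K(\tildes^{v'};M)}(z) = \kappa^{v,v'}_m(x)$. (When $m \ge d$ the target $K_{m+1}(\tildes^{v'};M)$ vanishes, so this step merely forces $\kappa^{v,v'}_m(x) = 0$ and everything below holds trivially with $y = 0$.)

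Next I would put $y := \kappa^{v',r}_{m+1}(z) \in K_{m+1}(\tildes^r;M)$ and verify (a) and (b) by direct computation. For (a), using that $\kappa^{v',r}$ is a chain map together with the composition rule $\kappa^{v,r} = \kappa^{v',r} \circ \kappa^{v,v'}$ (valid since $v \ge v' \ge r$),
$$\partial_{m+1}^{K(\tildes^r;M)}(y) = \kappa^{v',r}_m\bigl(\partial_{m+1}^{K(\tildes^{v'};M)}(z)\bigr) = \kappa^{v',r}_m\bigl(\kappa^{v,v'}_m(x)\bigr) = \kappa^{v,r}_m(x).$$
For (b), the definition of $\kappa^{v',r}_{m+1}$ gives, for every $\tildej = \{j_1 < \cdots < j_{m+1}\} \in \Pmm$,
$$P_{\tildej}(y) = \bigl(s_{j_1}\cdots s_{j_{m+1}}\bigr)^{v'-r}\, P_{\tildej}(z) = \Bigl(\prod_{j \in \tildej} s_j\Bigr)^{n} P_{\tildej}(z),$$
which is divisible by $\prod_{j \in \tildej} s_j^{n}$, as required.

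The only genuinely delicate point is the choice of exponent, and it is exactly what dictates the formula $q(n,r) = h + (n+r)d$: the intermediate level $v'$ must exceed $r$ by precisely $n$ so that $\kappa^{v',r}$ introduces a factor $\bigl(\prod_{j\in\tildej}s_j\bigr)^{n}$ in every component, while $v$ must be of the form $h + v'd$ so that Lemma~\ref{homology map is zero} produces the preimage $z$ at level $v'$. Once these two constraints are arranged, there is no further obstacle; the verification of (a) and (b) is purely formal.
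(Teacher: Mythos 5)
Your proposal is correct and follows essentially the same route as the paper's proof: apply Lemma~\ref{homology map is zero} at the intermediate exponent $n+r$ to realize $\kappa^{v,n+r}_m(x)$ as a boundary $\partial_{m+1}^{K(\tildes^{n+r};M)}(z)$, then set $y = \kappa^{n+r,r}_{m+1}(z)$, with the factor $\bigl(\prod_{j\in\tildej}s_j\bigr)^{n}$ in each component of $y$ giving (b). The verification of (a) via the chain-map property and the composition rule for $\kappa$ matches the paper's argument exactly.
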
   
\begin{proof}
From Lemma \ref{homology map is zero}, the natural map $H_m(\kappa^{v,n+r}) : H_m(K(\tildes^v ; M)) \rightarrow H_m(K(\tildes^{n+r} ; M))$ is zero for all $m \geq 1$. Therefore for all $x \in \ker\left(\partial_m^{K({\tildes}^v ; M)}\right)$, we get $\kappa^{v,n+r}_m(x) \in B_m(K(\tildes^{n+r} ; M))$, where $B_m(K(\tildes^{n+r} ; M))$ denotes $\im(\partial_{m+1}^{K({\tildes^{n+r} ; M)}})$. Hence the commutative diagram below shows that $\kappa^{v,r}_m(x) \in B_m(K(\tildes^r ; M))$.
  $$ \begin{tikzcd}[sep=large]  
 & K_{m+1}(\tildes^{n+r};M) \arrow[twoheadrightarrow]{d}{\partial_{m+1}^{K(\tildes^{n+r};M)}} \arrow{r}{\kappa^{n+r,r}_{m+1}} & K_{m+1}(\tildes^{r};M) \arrow[twoheadrightarrow]{d}{\partial_{m+1}^{K(\tildes^r;M)}} \arrow{r}{P_{\tildei}} & \left( \prod_{i \in \tildei} s_i^{n-1} \right) M  \\
 \ker\left( \partial_m^{K(\tildes^v;M)} \right) \arrow[r,"\kappa^{v,n+r}_m","(Lemma\ \ref{homology map is zero})"']  &  B_m(K(\tildes^{n+r};M))  \arrow{r}{\kappa^{n+r,r}_m} &  B_m(K(\tildes^r;M)) &
\end{tikzcd}$$ 
Since $\kappa^{v,n+r}_m(x) \in B_m(K(\tildes^{n+r};M))$, there exists $z = \sum_{\tildej \in \tiny{\Pmm}} c_{\tildej} \ej \in K_{m+1}(\tildes^{n+r};M)$ such that $\partial_{m+1}^{K(\tildes^{n+r};M)}(z) = \kappa^{v,n+r}_m(x)$. Let $y: = \kappa^{n+r,r}_{m+1}(z)$. Then $\partial_{m+1}^{K(\tildes^r;M)}(y) = \kappa^{v,r}_m(x)$. Also, 
$$ y = \sum_{\tildej \in \tiny{\Pmm}} \left( \prod_{j \in \tildej} s_j^{n} \right) c_{\tildej} \ej \quad \textrm{and hence} \quad 
P_{\tildej}(y) = \left( \prod_{j \in \tildej} s_j^{n} \right)  c_{\tildej} \quad \textrm{ for all } \tildej \in \Pmm.$$
\end{proof}
 
\begin{thm}\label{T1}
  Let $r \geq 1$, $m \geq 1$ and $x = \sum_{\tildei \in \tiny{\P}} x_{\tildei} \ei \in \ker\left( \partial_{m}^{K(\tildes^r;M)} \right)$. Suppose $x_{\tildei}$ is divisible by $\prod_{i \in \tildei}s_i^{l+n}$  for each $\tildei \in \P$. Then there exists $y\in \ker\left( \partial_m^{K(\tildes^{n+r};M)} \right) $ such that $x = \kappa^{n+r,r}_m(y)$.
\end{thm}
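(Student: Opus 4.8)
The plan is to write down the lift $y$ explicitly from the divisibility hypothesis and then check that it is a cycle. For each $\tildei \in \P$ the hypothesis gives some $w_{\tildei} \in M$ with $x_{\tildei} = \big(\prod_{i \in \tildei} s_i^{l+n}\big)\, w_{\tildei}$; I would fix one such $w_{\tildei}$ for every $\tildei$, set $y_{\tildei} := \big(\prod_{i \in \tildei} s_i^{l}\big)\, w_{\tildei}$, and put $y := \sum_{\tildei \in \P} y_{\tildei}\, \ei \in K_m(\tildes^{n+r};M)$. Since $\kappa^{n+r,r}_m$ sends $\ei$ to $\big(\prod_{i \in \tildei} s_i^{n}\big)\, \ei$, we get $\kappa^{n+r,r}_m(y) = \sum_{\tildei} \big(\prod_{i \in \tildei} s_i^{l+n}\big)\, w_{\tildei}\, \ei = x$ at once, so it only remains to check that $y \in \ker\big(\partial_m^{K(\tildes^{n+r};M)}\big)$.

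For this I would combine two observations. First, since $\kappa^{n+r,r}$ is a chain map and $x$ is a cycle, $\kappa^{n+r,r}_{m-1}\big(\partial_m^{K(\tildes^{n+r};M)}(y)\big) = \partial_m^{K(\tildes^{r};M)}\big(\kappa^{n+r,r}_m(y)\big) = \partial_m^{K(\tildes^{r};M)}(x) = 0$, so $\partial_m^{K(\tildes^{n+r};M)}(y) \in \ker\kappa^{n+r,r}_{m-1}$; as $\kappa^{n+r,r}_{m-1}$ acts on the summand $M\, e_{\tilde k}$ of $K_{m-1}(\tildes^{n+r};M)$ by multiplication by $\prod_{i \in \tilde k} s_i^{n}$, this says $P_{\tilde k}\big(\partial_m^{K(\tildes^{n+r};M)}(y)\big) \in \big(0 :_M \prod_{i \in \tilde k} s_i^{n}\big)$ for every ordered $(m-1)$-subset $\tilde k$ of $[d]$. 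Second, from the Koszul differential, $P_{\tilde k}\big(\partial_m^{K(\tildes^{n+r};M)}(y)\big) = \sum_{p \notin \tilde k} \pm\, s_p^{n+r}\, y_{\tilde k \sqcup p}$, and every $y_{\tilde k \sqcup p}$ is divisible by $\prod_{i \in \tilde k} s_i^{l}$ by construction, so $P_{\tilde k}\big(\partial_m^{K(\tildes^{n+r};M)}(y)\big) = \big(\prod_{i \in \tilde k} s_i^{l}\big)\, u_{\tilde k}$ for some $u_{\tilde k} \in M$. Combining the two, $\big(\prod_{i \in \tilde k} s_i^{l+n}\big)\, u_{\tilde k} = 0$; running the argument of Lemma~\ref{annihilatorlemma}(i) with the subset $\tilde k$ in place of $[d]$ gives $\big(0 :_M \prod_{i \in \tilde k} s_i^{l+n}\big) = \big(0 :_M \prod_{i \in \tilde k} s_i^{l}\big)$, hence $\big(\prod_{i \in \tilde k} s_i^{l}\big)\, u_{\tilde k} = 0$, i.e. $P_{\tilde k}\big(\partial_m^{K(\tildes^{n+r};M)}(y)\big) = 0$. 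Since $\tilde k$ was arbitrary, $\partial_m^{K(\tildes^{n+r};M)}(y) = 0$, as required.

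The step that needs care --- and the reason the hypothesis demands divisibility by $\prod_{i \in \tildei} s_i^{l+n}$, with the buffer $l$, rather than by $\prod_{i \in \tildei} s_i^{n}$ --- is the last one: an arbitrary $\kappa^{n+r,r}_m$-preimage of the cycle $x$ need not itself be a cycle, and it is precisely the stabilization of the ascending chains $(0:_M s_i^t)$ at $t=l$ (Notation~\ref{notations1}) that turns the a priori statement ``$\partial_m^{K(\tildes^{n+r};M)}(y)$ lies in $\ker\kappa^{n+r,r}_{m-1}$ and is, componentwise, divisible by the relevant product of $s_i^{l}$'s'' into the vanishing of that differential. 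The other steps are just routine sign and index bookkeeping with the Koszul complex, so I expect the only real work to be organizing that annihilator argument cleanly.
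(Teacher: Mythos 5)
Your proposal is correct and is essentially the paper's own argument: you construct the identical lift $y_{\tildei} = \bigl(\prod_{i \in \tildei} s_i^{l}\bigr) w_{\tildei}$ and close the argument with the same stabilization $\bigl(0 :_M \prod_{i \in \tilde k} s_i^{l+n}\bigr) = \bigl(0 :_M \prod_{i \in \tilde k} s_i^{l}\bigr)$ from Lemma~\ref{annihilatorlemma}, applied to each $(m-1)$-subset. The only cosmetic difference is that you obtain the key relation $\bigl(\prod_{i\in\tilde k} s_i^{l+n}\bigr)u_{\tilde k}=0$ from the chain-map identity $\kappa^{n+r,r}_{m-1}\circ\partial_m = \partial_m\circ\kappa^{n+r,r}_m$, whereas the paper expands the cycle condition on $x$ componentwise; these are the same computation.
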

\begin{proof}
We first consider the case $m = 1$. Then $x =\sum_{i =1}^{d}x_i e_i$ and $\partial_1^{K(\tildes^r;M)}(x) = \sum_{i =1}^{d} s_i^r x_i  = 0$. Since $s_i^{l+n}$ divides $x_i$, there exists $c_i \in M$ such that $x_i = s_i^{l+n} c_i$ for all $i \in [d]$. Define $y =\sum_{i =1}^{d}s_i^{l} c_i  e_i \in K_1(\tildes^{n+r};M)$. Then $\partial_1^{K(\tildes^{n+r};M)}(y) = \sum_{i =1}^{d} s_i^{l+n+r} c_i = \sum_{i =1}^{d} s_i^r x_i = 0$. Further, 
$\kappa^{n+r,r}_1(y) =\sum_{i =1}^{d} s_i^{l+n} c_i  e_i= x$, thus proving the statement in this case.\par Consider the case $m \geq 2$. Since $x = \sum_{\tildei \in \tiny \P} x_{\tildei} \ei  \in \ker\left(\partial_{m}^{K(\tildes^r;M)}\right)$, we have
\begin{equation}
    \sum_{\tildej \in \tiny \Pm} \left( \sum_{k \in [d]\backslash \tildej} (-1)^{\tau(k,\tildej)+1} s_k^r x_{(\tildej \sqcup k)} \right) \ej = 0
\end{equation}
where $\tau(k,\tildej)$ denotes the position of $k$ in the ordered set $\tildej \sqcup k$. Hence for each $\tildej \in \Pm$, 
\begin{equation}\label{K1}
    \sum_{k \in [d]\backslash \tildej} (-1)^{\tau(k,\tildej)+1} s_k^r x_{(\tildej \sqcup k)} = 0.
\end{equation}
Since $\prod_{t \in \tildei}s_t^{l+n}$ divides $x_{\tildei}$, we get $x_{\tildei}= \left( \prod_{t \in \tildei}s_t^{l+n} \right) x'_{\tildei}$ for some $x'_{\tildei} \in M$. Thus for each $\tildej \in \Pm$ and $k \in [d] \setminus \tildej$, we obtain
\begin{align*}
    0 = \sum_{k \in [d]\backslash \tildej} (-1)^{\tau(k,\tildej)+1} s_k^r x_{(\tildej \sqcup k)} & = \sum_{k \in [d]\backslash \tildej} (-1)^{\tau(k,\tildej)+1} s_k^r \left( \prod_{t \in (\tildej \sqcup k)}s_t^{l+n}\right) x'_{(\tildej \sqcup k)} \\
    & = \left(\prod_{t \in \tildej}s_t^{l}\right)\left(\prod_{t \in \tildej}s_t^{n} \right)  \sum_{k \in [d]\backslash \tildej} (-1)^{\tau(k,\tildej)+1} s_k^{n+r} (s_k^{l}  x'_{(\tildej \sqcup k)}) .\\
\end{align*}
By Lemma \ref{annihilatorlemma}, for all $n \geq 1$, $\left(0:_M \left( \prod_{t \in \tildej}s_t^{n+l}\right) \right) = \left( 0:_M\left(\prod_{t \in \tildej}s_t^{l}\right)\right)$. Thus,
$$\left( \prod_{t \in \tildej}s_t^{l} \right) \sum_{k \in [d]\backslash \tildej} (-1)^{\tau(k,\tildej)+1} s_k^{n+r} (s_k^{l}  x'_{(\tildej \sqcup k)}) = 0.$$ 
 Take $y_{\tildei} = \left( \prod_{t \in \tildei}s_t^{l} \right) x'_{\tildei} $. Observe that $x_{\tildei} = \left( \prod_{t \in \tildei}s_t^n \right)y_{\tildei}$. Define $y := \sum_{\tildei \in \tiny{\P}} y_{\tildei} \ei \in K_m{(\tildes^{n+r};M)}$. Then, for all $\tildej \in \Pm$, the above equation yields
$$ \sum_{k \in [d]\backslash \tildej} (-1)^{\tau(k,\tildej)+1} s_k^{n+r}   y_{(\tildej \sqcup k) } = 0 .$$ 
Therefore $y \in \ker\left( \partial_m^{K(\tildes^{n+r};M)}\right)$. Clearly $\kappa^{n+r,r}_m(y) = x$. 
\end{proof} 

\begin{defn}\label{defn-rmk}
Let $\tildef = f_1 \ldots , f_n \in R$. Taking intuition from the construction of the Tate resolution (refer for example \cite{Avramov}), we now construct a chain complex $T'(\tildef;M)$. Define
\[
T'_i(\tildef;M) = \begin{cases}
            0 & \textrm{ if }~  i < 0 \\
            K_i(\tildef;M) & \textrm{ if }~ i = 0, 1 \\
            K_i(\tildef;M) \oplus R^{t_i} & \textrm{ if }~  i \geq 2
        \end{cases}
\]
where the differentials and $t_i$ are inductively defined as follows : \\$\partial_1^{T'(\tildef;M)} := \partial_1^{K(\tildef;M)}$. Choose a generating set $S_1$ of $Z_1(T'(\tildef;M))$ and define $t_2 = \lvert S_1 \rvert$. Suppose for all $j \leq r-1$, the differentials $\partial_j^{T'(\tildef;M)}$ have been defined , generating sets $S_j$ of $Z_j(T'(\tildef;M))$ have been chosen and $t_{j+1} = \lvert S_j \rvert$. Define $\partial_r^{T'(\tildef;M)}$ by
$\partial_r^{T'(\tildef;M)}|_{K_r(\tildef;M)} = \partial_r^{K(\tildef;M)}$ and $\partial_r^{T'(\tildef;M)}$ maps a basis of $R^{t_r}$ to the set $S_{r-1} $ of $Z_{r-1}(T'(\tildef;M))$. Choose a generating set $S_r$ of $Z_r(T'(\tildef;M))$ and define $t_{r+1}=\lvert S_r \rvert$.
\end{defn}

Note that $T'(\tildef;M)$ satisfies the following properties : 
 \begin{enumerate}
        \item $K(\tildef;M)$ is a subcomplex of $T'(\tildef;M)$.
        \item $H_i(T'(\tildef;M)) = 0$ for all $i \geq 1$ and $H_0(T'(\tildef;M)) = M/(\tildef)M$.
    \end{enumerate}
 A slightly enhanced version of the above construction when $M = R$ in order to obtain a dg $R$-algebra structure is precisely the Tate resolution on $R/(\tildef)$ as defined in \cite{Avramov}, which we denote by $T (\tildef)$. We are now set up to state and prove the main theorem.

\begin{thm}\label{chainmapexists}
 Given $r \geq 1$, there exists  a chain complex map $\phi: T'(\tildes^{u(r)};M) \to K(\tildes^r;M)$ with $\phi|_{K(\tildes^{u(r)};M)} = \kappa^{u(r),r}$ for $u(r) = (h+l)\left(\sum_{j = 0}^{d-1}d^j\right) + rd^{d}$. In particular, there exists a chain complex map $\phi: T (\tildes^{u(r)}) \to K(\tildes^r)$ such that $\phi_0 = \kappa^{u(r),r}_0 = id_R$.
 \end{thm}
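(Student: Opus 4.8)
The plan is to build the chain map $\phi : T'(\tildes^{u(r)};M) \to K(\tildes^r;M)$ degree by degree, using the explicit divisibility control provided by Lemma~\ref{T2} and Theorem~\ref{T1}. On the Koszul subcomplex $K(\tildes^{u(r)};M) \subseteq T'(\tildes^{u(r)};M)$ we are forced to set $\phi = \kappa^{u(r),r}$, so the content is in defining $\phi$ on the ``extra'' free summands $R^{t_i}$ in degrees $i \geq 2$ and checking compatibility with the differentials. The first key observation is that $\phi$ is already determined in degrees $0$ and $1$ (where $T'$ agrees with $K$), and the map $\kappa^{u(r),r}$ is a chain map there, so the base of the induction is immediate. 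For degree $2$: a basis vector of $R^{t_2}$ maps under $\partial_2^{T'}$ to an element of the chosen generating set $S_1 \subseteq Z_1(T'(\tildes^{u(r)};M)) = \ker(\partial_1^{K(\tildes^{u(r)};M)})$; I need to send that basis vector to some $y \in K_2(\tildes^r;M)$ with $\partial_2^{K(\tildes^r;M)}(y) = \kappa^{u(r),r}_1(\text{the cycle})$, and this is exactly what Lemma~\ref{T2} supplies --- moreover Lemma~\ref{T2}(b) gives that $P_{\tildej}(y)$ is divisible by the appropriate power $\prod_{j\in\tildej} s_j^{n}$, which is the hypothesis needed to feed into the next stage.

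The inductive step is where the bookkeeping of exponents matters. Assume $\phi$ has been defined up to degree $i-1$ (with $i \geq 3$) so that it is a chain map in those degrees and so that, on the free summand $R^{t_{i-1}}$, the images have coordinates divisible by suitably high powers of the $s_j$'s. Take a basis vector $\epsilon$ of $R^{t_i}$; then $\partial_i^{T'}(\epsilon) \in S_{i-1} \subseteq Z_{i-1}(T'(\tildes^{u(r)};M))$, and I apply $\phi_{i-1}$ (already defined) to get an element $w \in K_{i-1}(\tildes^r;M)$. Because $\phi$ is a chain map through degree $i-1$ and $\partial_i^{T'}(\epsilon)$ is a cycle, $w$ is a cycle in $K_{i-1}(\tildes^r;M)$; and the divisibility hypothesis on the image of $R^{t_{i-1}}$ together with the divisibility built into $K(\tildes^{u(r)};M) \to K(\tildes^r;M)$ (namely $\kappa^{u(r),r}$ multiplies the $\tildei$-coordinate by $(\prod_{i\in\tildei}s_i)^{u(r)-r}$) ensures that $w$ has every coordinate divisible by $\prod_{t\in\tildei} s_t^{l+n}$ for a value of $n$ large enough. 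Then Theorem~\ref{T1} produces $y' \in \ker(\partial_i^{K(\tildes^{n+r};M)})$ with $\kappa^{n+r,r}_i(y') = w$, but I actually want a bounded-degree preimage living over $K(\tildes^r;M)$ directly; so instead I apply Lemma~\ref{T2} at the appropriate level $v = q(n,r)$ to find $y \in K_i(\tildes^r;M)$ with $\partial_i^{K(\tildes^r;M)}(y) = w$ and with $P_{\tildej}(y)$ divisible by $\prod_{j\in\tildej}s_j^{n}$ --- this is what lets the induction continue to degree $i+1$. Define $\phi_i(\epsilon) := y$ and extend $R$-linearly; together with $\phi_i|_{K_i} = \kappa^{u(r),r}_i$ this gives a well-defined map, and $\partial_i^{K(\tildes^r;M)} \circ \phi_i = \phi_{i-1} \circ \partial_i^{T'}$ by construction on the free part and by $\kappa$ being a chain map on the Koszul part.

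The crux is verifying that the single choice $u(r) = (h+l)\left(\sum_{j=0}^{d-1}d^j\right) + rd^d$ is large enough for \emph{all} degrees simultaneously: each application of Lemma~\ref{T2} consumes $h + (\text{something})\cdot d$ from the available exponent and each application of Theorem~\ref{T1} consumes an $l$, and the Koszul complex has length $d$, so the exponent requirement unwinds as a recursion $u_i \mapsto h + l + d\cdot u_{i+1}$ over $i = 1, \dots, d$ with $u_d = r$; solving this geometric recursion gives exactly the stated closed form $u(r) = (h+l)(1 + d + \cdots + d^{d-1}) + r d^d$. The main obstacle, and the part that needs care rather than cleverness, is tracking these divisibility exponents coherently across the two lemmas so that the hypothesis ``$x_{\tildei}$ divisible by $\prod s_i^{l+n}$'' of Theorem~\ref{T1} is met at each stage with a specific $n$ that leaves enough room for the remaining $\leq d - i$ stages --- i.e., making the induction hypothesis quantitatively strong enough to propagate. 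Finally, the ``in particular'' for $T(\tildes^{u(r)})$ over $R$ follows by taking $M = R$: the Tate resolution $T(\tildes^{u(r)})$ is obtained from $T'(\tildes^{u(r)};R)$ by the enhanced construction adding only the dg-algebra-forced generators, so the same degreewise argument (or a direct restriction) yields $\phi: T(\tildes^{u(r)}) \to K(\tildes^r)$ with $\phi_0 = \kappa^{u(r),r}_0 = \mathrm{id}_R$.
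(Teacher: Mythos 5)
Your construction is the same as the paper's: set $\phi=\kappa^{u(r),r}$ on the Koszul subcomplex, and on each extra summand $R^{t_i}$ push a basis vector down by $\phi_{i-1}\circ\partial_i^{T'}$, lift the resulting cycle of $K(\tildes^r;M)$ to a cycle over a higher-power Koszul complex via Theorem~\ref{T1} (this is where the divisibility invariant is consumed), and then produce a boundary preimage in $K(\tildes^r;M)$ with renewed divisibility via Lemma~\ref{T2}. Your exponent recursion is, up to an off-by-one in where the indexing starts, the paper's $q^{(i)}=q(q^{(i-1)}+l,r)-r$, and it does produce the stated $u(r)$. One wording caution: Lemma~\ref{T2} only accepts cycles over $K(\tildes^{q(n,r)};M)$, so Theorem~\ref{T1} and Lemma~\ref{T2} must be applied \emph{in sequence} (lift the cycle first, then realize its image as a boundary), not as alternatives as your ``so instead'' suggests; your bookkeeping makes clear this is what you intend, but the sentence as written would not compile into a proof.

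The one genuine omission is the top degree. The complex $T'(\tildes^{u(r)};M)$ has nonzero terms in degrees $>d$ while $K(\tildes^r;M)$ vanishes there, so $\phi_i=0$ is forced for $i>d$ and the chain map condition in degree $d+1$ becomes the nontrivial assertion $\phi_d\circ\partial_{d+1}^{T'(\tildes^{u(r)};M)}=0$. This does not follow from the construction of $\phi_d$ alone; it is exactly the last use of the divisibility invariant: for $x\in T'_{d+1}(\tildes^{u(r)};M)$, the element $\phi_d\partial_{d+1}^{T'}(x)$ is a cycle in $K_d(\tildes^r;M)$ whose coordinates are divisible by $\prod_{i\in\tildei}s_i^{q^{(0)}+l}$, so Theorem~\ref{T1} lifts it to a cycle $z$ over $K(\tildes^{q(0,r)};M)$, and Lemma~\ref{T2}(a) then exhibits $\kappa^{q(0,r),r}_d(z)=\phi_d\partial_{d+1}^{T'}(x)$ as lying in $\operatorname{Im}\bigl(\partial_{d+1}^{K(\tildes^r;M)}\bigr)=0$. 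Without this check the map you have built is only a chain map through degree $d$. Everything else in your outline is in order.
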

\begin{proof}
 Recall the notation $q(n,r) = h+(n+r)d$ from Lemma \ref{T2}. Define
 $$
 q^{(i)} = 
 \begin{cases}
     q(0,r)-r = h + rd -r & \quad \textrm{ if }~ i = 0 \\
     q(q^{(i-1)}+ l,r)-r & \quad \textrm{ if }~ i \geq 1 \\
 \end{cases} \qquad \qquad . $$
Claim : For $i\geq 0$, $q^{(i)} = h\left(\sum_{j = 0}^{i}d^j\right) + rd^{i+1} + l \left(\sum_{j = 0}^{i-1}d^j\right) d -r$. \\
 The claim is clearly true for the base case $i=0$ and follows by induction as shown by the calculation below : 
 \begin{align*}
     q^{(i+1)} = q(q^{(i)}+l,r)-r  & = h + \left(h\left(\sum_{j = 0}^{i}d^j\right) + rd^{i+1} + l \left(\sum_{j = 0}^{i-1}d^j\right) d -r+ l + r \right) d -r \\
     & = h\left(\sum_{j = 0}^{i+1}d^j\right) + rd^{i+2} + l \left(\sum_{j = 0}^{i}d^j\right) d -r.
 \end{align*}
  Therefore $q^{(d-1)}+l+r = h\left(\sum_{j = 0}^{d-1}d^j\right) + rd^d + l \left(\sum_{j = 0}^{d-2}d^j\right) d + l = (h+l)\left(\sum_{j = 0}^{d-1}d^j\right) + rd^{d} = u(r)$. For the rest of this proof, we abbreviate $u(r)$ by $u$. We now proceed to construct $\phi$. For $m = 0$, define $\phi_0$ to be the identity map. For $m > d$, the zero map is the only possible map.
  For $1 \leq m \leq d$, we will inductively define $\phi_m$ such that\begin{enumerate}[(a)]
      \item $\phi_{m-1} \partial_m^{ T'(\tildes^{u};M)} = \partial_m^{K(\tildes^r;M)} \phi_m$
      \item $\phi_m|_{K_m(\tildes^u;M)} = \kappa^{u,r}_m$
      \item $P_{\tildei} \phi_m$ is divisible by $\prod_{i \in \tildei} s_i^{q^{(d-m)}+l}$ for all $\tildei \in \P$.
  \end{enumerate} 
   Then, we will show that the constructed map $P_{\tildei}\phi_d$ being divisible by $\prod_{i \in \tildei} s_i^{q^{(0)}+l}$ will imply that $\phi_{d} \partial_{d+1}^{\tiny T'(\tildes^{u};M)} = 0 = \partial_{d+1}^{K(\tildes^r;M)} \phi_{d+1}$, which will complete the proof. \\
  \indent In the base case $m=1$, note that $T'_1(\tildes^{u};M) = M^d \cong \bigoplus_{i = 1}^{d} Me_i$ and so we can define $\phi_1$ by extending $\phi_1(e_i) = s_i^{u-r} e_i $. Then $\phi_{0} \partial_1^{ T'(\tildes^{u};M)} = \partial_1^{K(\tildes^r;M)} \phi_1$, $\phi_1|_{K_1(\tildes^u;M)} = \kappa^{u,r}_1$ and $P_i \phi_1$ is divisible by $ s_i^{q^{(d-1)}+l}$, i.e., conditions (a), (b) and (c) hold.
  \par Suppose $\phi_{t}$ exists for $1 \leq t<m \leq d$ such that the conditions (a), (b) and (c) hold. Recall that $T'_m(\tildes^u;M) = K_m(\tildes^u;M) \oplus R^{t_m}$. We will define $\phi_m$ on each of the summands separately and check that (a), (b) and (c) above are satisfied on the summands. Define $\phi_m|_{K_m(\tildes^u;M)} := \kappa^{u,r}_m$. Since $\partial_m^{ T'(\tildes^{u};M)}|_{K_m(\tildes^u;M)} = \partial_m^{ K(\tildes^{u};M)}$ by definition and $\kappa^{u,r}$ is a chain complex map, $\phi_{m-1} \partial_m^{ T'(\tildes^{u};M)}|_{K_m(\tildes^u;M)} = \partial_m^{K(\tildes^r;M)} \phi_m|_{K_m(\tildes^u;M)}$. Since $P_{\tildei}\kappa^{u,r}_m$ is divisible by $\prod_{i \in \tildei} s_i^{u-r}$ for all $\tildei \in \P$ and $u-r \geq q^{(d-m)}+l$, the condition (c) follows for the summand $K_m(\tildes^u;M)$. Let $x$ be a basis element in the free $R$-module $R^{t_m} \subseteq T'_m(\tildes^{u};M)$. By induction, $\phi_{m-1} \partial_m^{ T'(\tildes^{u};M)}(x) \in \ker\left(\partial_{m-1}^{K(\tildes^r;M)}\right)$ and $P_{\tildei}\phi_{m-1} \partial_m^{\tiny T'(\tildes^{u};M)}(x)$ is divisible by $\prod_{i \in \tildei} s_i^{q^{(d-m+1)}+l}$. From Theorem \ref{T1}, there exists $z \in \ker\left( \partial_{m-1}^{K(\tildes^{q^{(d-m+1)}+r};M)}\right)$ such that $\kappa^{q^{(d-m+1)}+r,r}_{m-1}(z) = \phi_{m-1} \partial_m^{ T'(\tildes^{u};M)}(x)$. Since $q^{(d-m+1)}+r = q(q^{(d-m)}+ l,r)$ and $z \in \ker\left( \partial_{m-1}^{K(\tildes^{q^{(d-m+1)}+r};M)}\right)$, Lemma \ref{T2} implies that there exists $y \in K_{m}(\tildes^r;M)$ such that $\partial_{m}^{K(\tildes^r;M)}(y) = \kappa^{q^{(d-m+1)}+r,r}_{m-1}(z)$ and $P_{\tildei} (y)$ is divisible by $\prod_{i \in \tildei} s_i^{q^{(d-m)}+l}$. Define $\phi_m(x) := y$. Then, $$\phi_{m-1} \partial_m^{ T'(\tildes^{u};M)}(x) = \kappa^{q^{(d-m+1)}+r,r}_{m-1}(z) = \partial_{m}^{K(\tildes^r;M)}(y)=   \partial_m^{K(\tildes^r;M)} \phi_m(x).$$ 
  Extend this to a map $\phi_m$ on 
  $R^{t_m}$ by linearity. 
  Since for every basis element of $R^{t_m}$, the properties (a) and (c) are satisfied, both of these hold for every element in $R^{t_m}$. Thus $\phi_m$ has been constructed inductively for $1 \leq m \leq d$. \\
  \indent It remains to check that $\phi_{d} \partial_{d+1}^{\tiny T'(\tildes^{u};M)} = 0$. Let $x \in T'_{d+1}(\tildes^{u};M)$. Since $P_{\tildei}\phi_{d} \partial_{d+1}^{\tiny T'(\tildes^{u};M)}(x)$ is divisible by $\prod_{i \in \tildei} s_i^{q^{(0)}+l}$, by Theorem \ref{T1}, there exists $z \in \ker\left(\partial_d^{K(\tildes^{q^{(0)}+r};M)}\right)$ such that $\kappa^{q^{(0)}+r,r}_{d}(z) = \phi_{d} \partial_{d+1}^{\tiny T'(\tildes^{u};M)}(x)$. Since $q^{(0)}+r = q(0,r)$ and $z \in \ker\left(\partial_d^{K(\tildes^{q^{(0)}+r};M)}\right)$, Lemma \ref{T2}(a) implies that $\kappa^{q^{(0)}+r,r}_{d}(z) \in \im\left(\partial_{d+1}^{K(\tildes^r;M)}\right) = 0$.  This completes the proof. 
  \end{proof}
  \begin{rmk}
      Note that in the previous Theorem~\ref{chainmapexists}, the obtained $\phi$ need not be unique since at each degree, there could be several choices of lifts. Since both the Tate resolution $T (\tildes^{u(r)})$ and the Koszul complex $K(\tildes^r)$ are dg $R$-algebras, one may ask if we could construct the above map such that it is a dg $R$-algebra map. Begin with any map $\phi: T (\tildes^{u(r)}) \to K(\tildes^r)$ obtained from Theorem~\ref{chainmapexists}.
      Since the dg $R$-algebra $T (\tildes^{u(r)})$ is a polynomial algebra of the form $R[\{X_{ij} \mid i >0, j \in \Lambda_i \}]$ where $X_{ij}$ are variables of degree $i$, we can define a dg $R$-algebra map $\gamma: T (\tildes^{u(r)}) \to K(\tildes^r)$ by mapping $\gamma(X_{ij}) = \phi(X_{ij})$ and then extend it to all of $T (\tildes^{u(r)})$ using the dg-structure. Hence there exists a map of dg $R$-algebras from $T (\tildes^{u(r)})$ to $K(\tildes^r)$ which extends $\kappa^{u(r),r}$.
  \end{rmk}
  \begin{cor}\label{chain map for modules}
      Let $r \geq 1$ and $u(r)$ denote the integer as in Theorem \ref{chainmapexists}. Let $P_\bullet(M/(\tildes^{u(r)})M)$ be a projective resolution of $M/(\tildes^{u(r)})M$. Then there exists a chain complex map $$\xi: P_\bullet(M/(\tildes^{u(r)})M) \to K(\tildes^r;M)$$ such that $H_0(\xi)$ is the natural surjection $M/(\tildes^{u(r)})M \twoheadrightarrow M/(\tildes^r)M$.
  \end{cor}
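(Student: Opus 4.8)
The plan is to factor $\xi$ as the composite of a lift of the identity through the comparison theorem with the map $\phi$ produced in Theorem~\ref{chainmapexists}. First, by Theorem~\ref{chainmapexists} applied to the same $r$, there is a chain complex map $\phi: T'(\tildes^{u(r)};M) \to K(\tildes^r;M)$ with $\phi|_{K(\tildes^{u(r)};M)} = \kappa^{u(r),r}$. In degree $0$ this forces $\phi_0 = \kappa^{u(r),r}_0 = id_M$ (the product over the empty index set being $1$), so on homology $\phi$ induces the map $M/(\tildes^{u(r)})M \to M/(\tildes^r)M$ coming from $id_M$, which is exactly the natural surjection; this descends because $u(r) \geq r$, hence $(\tildes^{u(r)})M \subseteq (\tildes^r)M$.

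Second, I would recall that, by the two properties noted right after Definition~\ref{defn-rmk}, the complex $T'(\tildes^{u(r)};M)$ is concentrated in degrees $\geq 0$, satisfies $H_i(T'(\tildes^{u(r)};M)) = 0$ for all $i \geq 1$, and has $H_0(T'(\tildes^{u(r)};M)) = M/(\tildes^{u(r)})M$. In other words, the augmented complex $\cdots \to T'_1(\tildes^{u(r)};M) \to T'_0(\tildes^{u(r)};M) \to M/(\tildes^{u(r)})M \to 0$ is exact, so $T'(\tildes^{u(r)};M)$ is a (generally non-projective) left resolution of $M/(\tildes^{u(r)})M$.

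Third, I would invoke the comparison theorem for projective resolutions: since $P_\bullet(M/(\tildes^{u(r)})M)$ is a complex of projectives resolving $M/(\tildes^{u(r)})M$ and $T'(\tildes^{u(r)};M)$ is a resolution of the same module, the identity map of $M/(\tildes^{u(r)})M$ lifts to a chain complex map $\psi: P_\bullet(M/(\tildes^{u(r)})M) \to T'(\tildes^{u(r)};M)$ with $H_0(\psi) = id$. Setting $\xi := \phi \circ \psi$ then produces a chain complex map $P_\bullet(M/(\tildes^{u(r)})M) \to K(\tildes^r;M)$, and $H_0(\xi) = H_0(\phi)\circ H_0(\psi) = H_0(\phi)$ is the natural surjection $M/(\tildes^{u(r)})M \twoheadrightarrow M/(\tildes^r)M$, as required.

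I do not expect a genuine obstacle here; the only point that needs (routine) checking is that $T'(\tildes^{u(r)};M)$ qualifies as a resolution for the purposes of the comparison theorem, i.e.\ that it is bounded below and acyclic in positive degrees with the correct $H_0$, and this is precisely what the inductive construction in Definition~\ref{defn-rmk} delivers. Alternatively one could build $\xi$ directly, degree by degree, as in the proof of Theorem~\ref{chainmapexists}, but composing with $\phi$ is cleaner and reuses the work already done.
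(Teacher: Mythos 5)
Your proposal is correct and is essentially identical to the paper's proof: both lift the identity on $M/(\tildes^{u(r)})M$ through the comparison theorem to get $\psi: P_\bullet(M/(\tildes^{u(r)})M) \to T'(\tildes^{u(r)};M)$ and then compose with the map $\phi$ from Theorem~\ref{chainmapexists}, using that $H_0(\phi)$ is the natural surjection. Your extra verification that $T'(\tildes^{u(r)};M)$ is an acyclic-in-positive-degrees resolution with the correct $H_0$ is exactly the point the paper relies on implicitly when citing the lifting property.
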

  \begin{proof}
      By the lifting property for a complex of projective modules \cite[Theorem 2.2.6]{weibelhomological}, there exists a chain complex map $\psi:P_\bullet(M/(\tildes^{u(r)})M) \to T'(\tildes^{u(r)};M)$ such that $H_0(\psi)$ is the identity map on $M/(\tildes^{u(r)})M$. Composing $\psi$ with the map $\phi:T'(\tildes^{u(r)};M) \to K(\tildes^r;M)$ obtained from Theorem \ref{chainmapexists}, we get $\xi = \phi \circ \psi : P_\bullet(M/(\tildes^{u(r)})M) \to K(\tildes^r;M)$. Since $H_0(\phi)$
      is the natural surjection $M/(\tildes^{u(r)})M \twoheadrightarrow M/(\tildes^r)M$, so is $H_0(\xi)$.
  \end{proof}

For the sequence $\tildes = s_1,\dots,s_d$ and an $R$-module $N$, the dual of the Koszul complex $K(\tildes^r;N)^\vee := \Hom(K(\tildes),N)$  is a cochain complex concentrated in degrees zero to $d$. It is well known, for example by applying \cite[Proposition 1.6.9]{Bruns-Herzog} for all $r \geq 1$, that there are natural homomorphisms $H_i(K(\tildes^r;N)) \to \Tor_i^R(R/(\tildes^r),N)$ and $\Ext^i_R(R/(\tildes^r),N) \to H^i(K(\tildes^r;N)^\vee)$ for all $ i \geq 0$. The next corollary says that after raising powers suitably in the $\Tor$ and $\Ext$ modules, the maps in the other direction also exist.
 \begin{cor}\label{connection tor ext}
     Let $I = (\tildes)$ be an ideal and $N$ an $R$-module. Let $r \geq 1$ and $u(r)$ denote the integer as in Theorem \ref{chainmapexists} for $M = R$. Then for all $ i \geq 0$, there exist natural homomorphisms 
     $$ \Tor_i^R(R/(\tildes^{u(r)}),N) \to H_i(K(\tildes^r;N)) \text{\ and\ } H^i(K(\tildes^r;N)^\vee) \to \Ext^i_R(R/(\tildes^{u(r)}),N).$$ 
\end{cor}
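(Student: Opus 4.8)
The plan is to obtain both families of maps by applying $-\otimes_R N$ and $\Hom_R(-,N)$ to the single chain complex map $\phi\colon T(\tildes^{u(r)}) \to K(\tildes^r)$ furnished by Theorem~\ref{chainmapexists} in the case $M = R$. Recall that, by its construction (Definition~\ref{defn-rmk} and the remarks following it), $T(\tildes^{u(r)})$ is a resolution of $R/(\tildes^{u(r)})$ by finitely generated free $R$-modules, so that for every $R$-module $N$ and every $i \geq 0$,
$$
\Tor_i^R(R/(\tildes^{u(r)}),N) = H_i\bigl(T(\tildes^{u(r)})\otimes_R N\bigr)
\qquad\text{and}\qquad
\Ext^i_R(R/(\tildes^{u(r)}),N) = H^i\bigl(\Hom_R(T(\tildes^{u(r)}),N)\bigr),
$$
whereas by definition $K(\tildes^r)\otimes_R N = K(\tildes^r;N)$ and $\Hom_R(K(\tildes^r),N) = K(\tildes^r;N)^\vee$.

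Tensoring $\phi$ with $N$ then produces a chain complex map $\phi\otimes_R N\colon T(\tildes^{u(r)})\otimes_R N \to K(\tildes^r;N)$, and taking homology in degree $i$ gives the first asserted homomorphism $\Tor_i^R(R/(\tildes^{u(r)}),N) \to H_i(K(\tildes^r;N))$. Dually, applying $\Hom_R(-,N)$ to $\phi$ reverses the arrow, yielding a cochain map $K(\tildes^r;N)^\vee = \Hom_R(K(\tildes^r),N) \to \Hom_R(T(\tildes^{u(r)}),N)$; passing to cohomology in degree $i$ gives $H^i(K(\tildes^r;N)^\vee) \to \Ext^i_R(R/(\tildes^{u(r)}),N)$. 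Naturality in $N$ is automatic, since $\phi$ is a fixed morphism of complexes and hence $\phi\otimes_R(-)$ and $\Hom_R(\phi,-)$ are natural transformations; on $H_0$ (resp.\ $H^0$) these maps recover the natural surjection $N/(\tildes^{u(r)})N \twoheadrightarrow N/(\tildes^r)N$ (resp.\ the inclusion $(0:_N(\tildes^r)) \hookrightarrow (0:_N(\tildes^{u(r)}))$).

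There is essentially no obstacle here beyond bookkeeping --- all the substance sits in Theorem~\ref{chainmapexists}. The one point worth flagging is that $\phi$ is not unique (it depends on the choices of lifts made degree by degree in the proof of Theorem~\ref{chainmapexists}), so these homomorphisms are canonical in $N$ only once such a $\phi$ has been fixed. This is intrinsic: since $K(\tildes^r)$ is a resolution of $R/(\tildes^r)$ exactly when $\tildes^r$ is a regular sequence, there is otherwise no distinguished factorization of the comparison map $T(\tildes^{u(r)}) \to R/(\tildes^r)$ through $K(\tildes^r)$, and the existence of any such factorization for $u(r)$ large is precisely the content of diagram~\eqref{intro-diag}.
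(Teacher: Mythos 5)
Your proof is correct and is essentially identical to the paper's: both apply $-\otimes_R N$ and $\Hom_R(-,N)$ to the map $\phi\colon T(\tildes^{u(r)})\to K(\tildes^r)$ from Theorem~\ref{chainmapexists} and pass to (co)homology, using that the Tate resolution is a free resolution of $R/(\tildes^{u(r)})$. Your added remarks on the behaviour in degree $0$ and on the dependence of the maps on the choice of $\phi$ are accurate but not needed.
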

\begin{proof}
    Consider the map $\phi$ obtained for $M = R$ in Theorem \ref{chainmapexists}. The functors $- \otimes_R N$ and $\Hom_R(-,N)$ applied to $\phi$ gives the required natural maps.
\end{proof}
The local cohomology module $H^i_I(N)$ can be defined either as $\lim_{\longrightarrow} \Ext^i_R(R/(\tildes^{n}),N)$ , or as \v{C}ech cohomology. The standard proof of its connection with Koszul cohomologies is by noting that the duals of the Koszul complexes $\left\{ K(\tildes^r;N)^\vee \right\}_{r \geq 1}$ converge to the \v{C}ech complex. However, the above discussion yields a new proof of this connection, without using the \v{C}ech complex.
\begin{cor}
    Let $I = (\tildes)$ be an ideal and $N$ an $R$-module. Then
    $$H^i_I(N) = \lim_{\longrightarrow} \Ext^i_R(R/(\tildes^{n}),N) \cong \lim_{\longrightarrow} H^i(K(\tildes^n;N)^\vee).$$
\end{cor}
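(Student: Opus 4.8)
The plan is to realise the two directed systems $\{\Ext^i_R(R/(\tildes^n),N)\}_{n\ge 1}$ and $\{H^i(K(\tildes^n;N)^\vee)\}_{n\ge 1}$ as cofinal subsequences of a single interleaved sequence, so that the isomorphism of colimits can be read off directly. The first equality, $H^i_I(N)=\lim_{\longrightarrow}\Ext^i_R(R/(\tildes^n),N)$, is one of the standard descriptions of local cohomology: the square power filtration $\{(\tildes^n)\}$ is equivalent to the $I$-adic one (Example \ref{equivalent filtration eg}(i)), so the colimit over $n$ of the $\Ext^i_R(R/(\tildes^n),N)$ agrees with the colimit over $k$ of the $\Ext^i_R(R/I^k,N)$, which is $H^i_I(N)$. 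Thus only the isomorphism $\lim_{\longrightarrow}\Ext^i_R(R/(\tildes^n),N)\cong\lim_{\longrightarrow}H^i(K(\tildes^n;N)^\vee)$ needs proof.

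First I would fix, for each $r\ge 1$, two comparison maps. Let $f_r\colon\Ext^i_R(R/(\tildes^r),N)\to H^i(K(\tildes^r;N)^\vee)$ be the natural map recalled just before Corollary \ref{connection tor ext}, i.e.\ the map on $H^i$ induced by $\Hom_R(-,N)$ applied to some chain map $\rho_r\colon K(\tildes^r)\to P_\bullet^{(r)}$ lifting the identity of $R/(\tildes^r)$, where $P_\bullet^{(r)}$ is a projective resolution of $R/(\tildes^r)$; and let $g_r\colon H^i(K(\tildes^r;N)^\vee)\to\Ext^i_R(R/(\tildes^{u(r)}),N)$ be the map of Corollary \ref{connection tor ext}, induced by $\Hom_R(-,N)$ applied to a chain map $\phi_r\colon T(\tildes^{u(r)})\to K(\tildes^r)$ as in Theorem \ref{chainmapexists} (here one uses $T(\tildes^{u(r)})$ as a projective resolution of $R/(\tildes^{u(r)})$, so that $\Hom_R(T(\tildes^{u(r)}),N)$ computes $\Ext^\bullet_R(R/(\tildes^{u(r)}),N)$, and one remembers that $\phi_r$ restricts to $\kappa^{u(r),r}$ on $K(\tildes^{u(r)})$ and has $(\phi_r)_0=\mathrm{id}_R$). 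The heart of the argument is to identify the two composites. For $g_r\circ f_r$: it is induced by $\rho_r\circ\phi_r\colon T(\tildes^{u(r)})\to P_\bullet^{(r)}$, which on $H_0$ is the natural surjection $R/(\tildes^{u(r)})\twoheadrightarrow R/(\tildes^r)$ (because $(\phi_r)_0=\mathrm{id}_R$, cf.\ Corollary \ref{chain map for modules}); since source and target are projective resolutions, any two lifts of this surjection are chain homotopic, so $g_r\circ f_r$ is exactly the transition map $\Ext^i_R(R/(\tildes^r),N)\to\Ext^i_R(R/(\tildes^{u(r)}),N)$ of the $\Ext$ system. For $f_{u(r)}\circ g_r$: computing $\Ext^\bullet_R(R/(\tildes^{u(r)}),N)$ with the resolution $T(\tildes^{u(r)})$ and using the natural inclusion $\iota\colon K(\tildes^{u(r)})\hookrightarrow T(\tildes^{u(r)})$ as the lift of the identity, $f_{u(r)}$ is induced by $\Hom_R(\iota,N)$, hence $f_{u(r)}\circ g_r$ is induced by $\phi_r\circ\iota=\phi_r|_{K(\tildes^{u(r)})}=\kappa^{u(r),r}$, which by definition is the transition map $H^i(K(\tildes^r;N)^\vee)\to H^i(K(\tildes^{u(r)};N)^\vee)$ of the Koszul system.

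It remains to conclude by cofinality. From the explicit value $u(r)=(h+l)\left(\sum_{j=0}^{d-1}d^j\right)+rd^d$ one checks that either $u(r)>r$ for every $r\ge 1$ — in which case $n_0=1$, $n_{k+1}=u(n_k)$ is strictly increasing, hence cofinal in $\BN$ — or else $u(r)=r$ for every $r$, which forces the degenerate situation where $K(\tildes^n)$ is already a free resolution of $R/(\tildes^n)$ for all $n$ (e.g.\ $\tildes$ a single nonzerodivisor) and the two systems coincide, so there is nothing to prove. In the first case, in the interleaved sequence
\[
\cdots\to\Ext^i_R(R/(\tildes^{n_k}),N)\xrightarrow{f_{n_k}}H^i(K(\tildes^{n_k};N)^\vee)\xrightarrow{g_{n_k}}\Ext^i_R(R/(\tildes^{n_{k+1}}),N)\to\cdots
\]
the subsequence of $\Ext$-terms is cofinal and its induced maps, the composites $g_{n_k}\circ f_{n_k}$, are the $\Ext$ transition maps by the first composite identity; the subsequence of Koszul terms is cofinal and its induced maps, the composites $f_{n_{k+1}}\circ g_{n_k}$, are the Koszul transition maps by the second composite identity. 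Hence all three systems have the same colimit, and together with the cofinality of $\{n_k\}$ in $\BN$ and the first paragraph this gives $H^i_I(N)\cong\lim_{\longrightarrow}H^i(K(\tildes^n;N)^\vee)$. The one delicate point is the bookkeeping in the middle step: one has to keep track of the contravariance of $\Hom_R(-,N)$ and of the fact that $\Ext$ may be computed via any projective resolution, and then invoke homotopy-uniqueness of lifts to identify the two composites with the transition maps; granting this, the rest is the routine interleaving argument for filtered colimits.
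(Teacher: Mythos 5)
Your proposal is correct and follows essentially the same route as the paper: the paper's (very terse) proof is exactly the observation that the standard comparison map $\Ext^i_R(R/(\tildes^r),N)\to H^i(K(\tildes^r;N)^\vee)$ together with the map of Corollary \ref{connection tor ext} in the opposite direction interleave the two directed systems, and you have simply carried out in detail the identification of the two composites with the transition maps and the resulting cofinality argument. The only point the paper leaves implicit that you rightly flag is the homotopy-uniqueness bookkeeping, which you handle correctly.
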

\begin{rmk}
The picture below summarizes the structure of the proofs thus far.
$$\text{Artin-Rees lemma} \Rightarrow \text{vanishing of Tor maps} \Rightarrow \text{vanishing of Koszul maps} \Rightarrow \text{existence of } \phi $$
Note that once we know the existence of the map $\phi$ in Theorem~\ref{chainmapexists}, it follows immediately that $H_i(\kappa^{u(r),r}) : H_i(K(\tildes^{u(r)}; M)) \to H_i(K(\tildes^r;M))$ is zero for all $i$ since $\kappa^{u(r),r}$ factors through $T'(\tildes^{u(r)};M)$ which has zero homologies for all $i \neq 0$. Thus, the vanishing of the maps of Koszul homologies is equivalent to the existence of the map $\phi$.
\end{rmk}

It is well-known (for example, refer to the proof of Theorem \ref{homology map is zero}) that the Koszul homologies can be described in terms of $\Tor$ modules, which yields the second implication above. The next proposition and theorem show that the vanishing of maps between $\Tor$ modules can be obtained from the vanishing of maps of Koszul homologies, i.e., the second implication can also be reversed.

\begin{prop}\label{koszul-tor vanishing}
    Let $I = (\tildes )$ be an ideal. Let $i \geq 1$ and $r \geq 1$. Then the following are equivalent. 
    \begin{enumerate}[(i)]
        \item There exists $v(i,r) \geq r$ such that the map $\Tor_i^R(R/I^{v(i,r)},M) \to \Tor_i^R(R/I^r,M)$ is zero. 
        \item There exists $w(i,r) \geq r$ such that $H_i(\kappa^{w(i,r),r}) : H_i(K(\tildes^{w(i,r)}; M)) \to H_i(K(\tildes^r;M))$ is zero.
    \end{enumerate}
\end{prop}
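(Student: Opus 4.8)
The plan is to prove the two implications separately, treating (i) $\Rightarrow$ (ii) as the routine direction and (ii) $\Rightarrow$ (i) as the one that genuinely uses the machinery developed in this section, namely Corollary \ref{connection tor ext}. Throughout I will use the equivalence of the filtrations $\{I^n\}$ and $\{(\tildes^n)\}$ recorded in Example \ref{equivalent filtration eg}(i) (concretely $I^{nd}\subseteq(\tildes^n)\subseteq I^n$), which lets me pass freely between $I$-adic powers and the powers $(\tildes^n)$; and I will repeatedly invoke the comparison-theorem principle that a chain map from a bounded-below complex of projectives to a complex with vanishing positive homology, lifting a prescribed map on $H_0$, is unique up to homotopy. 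This principle is what makes all the comparison maps below natural and lets me identify each composite of such maps with the expected natural transition map on $\Tor$ — the latter being forced by what the composite does on $H_0$.

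\emph{Proof of} (ii) $\Rightarrow$ (i). Fix $i\geq1$, $r\geq1$, and let $w=w(i,r)$ be as in (ii). There are two families of natural maps to play against each other. Comparing, for the coefficient module $R$, the Koszul complex $K(\tildes^a)$ with the Tate resolution $T(\tildes^a)$ and tensoring with $M$ gives natural maps $\alpha_a\colon H_i(K(\tildes^a;M))\to\Tor_i^R(R/(\tildes^a),M)$, compatible with the $\kappa$'s, isomorphisms in degree $0$. In the other direction, Corollary \ref{connection tor ext} (with coefficients $M$) gives maps $\beta_a\colon\Tor_i^R(R/(\tildes^{u(a)}),M)\to H_i(K(\tildes^a;M))$, where $u(a)$ is the integer of Theorem \ref{chainmapexists}. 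Chasing the relevant chain maps — $\phi\colon T(\tildes^{u(a)})\to K(\tildes^a)$ extending $\kappa^{u(a),a}$, and the inclusion $K(\tildes^{u(a)})\hookrightarrow T(\tildes^{u(a)})$ — and using the uniqueness principle, one gets $\beta_a\circ\alpha_{u(a)}=H_i(\kappa^{u(a),a})$, while $\alpha_a\circ\beta_a$ is the natural map $\Tor_i^R(R/(\tildes^{u(a)}),M)\to\Tor_i^R(R/(\tildes^a),M)$. Now the composite
\begin{multline*}
\Tor_i^R(R/(\tildes^{u(w)}),M)\xrightarrow{\ \beta_w\ }H_i(K(\tildes^w;M))\\
\xrightarrow{H_i(\kappa^{w,r})}H_i(K(\tildes^r;M))\xrightarrow{\ \alpha_r\ }\Tor_i^R(R/(\tildes^r),M)
\end{multline*}
is, again by the uniqueness principle, the natural map $\Tor_i^R(R/(\tildes^{u(w)}),M)\to\Tor_i^R(R/(\tildes^r),M)$, and it vanishes since its middle arrow does by (ii). Composing on the left with the natural map $\Tor_i^R(R/I^{u(w)d},M)\to\Tor_i^R(R/(\tildes^{u(w)}),M)$ (from $I^{u(w)d}\subseteq(\tildes^{u(w)})$) and on the right with $\Tor_i^R(R/(\tildes^r),M)\to\Tor_i^R(R/I^r,M)$ (from $(\tildes^r)\subseteq I^r$), and once more identifying the composite with the natural map, we conclude that $\Tor_i^R(R/I^{u(w)d},M)\to\Tor_i^R(R/I^r,M)$ is zero; so (i) holds with $v(i,r)=u(w(i,r))\,d\ (\geq r)$.

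\emph{Proof of} (i) $\Rightarrow$ (ii). This is the standard direction, following from the description of Koszul homology in terms of $\Tor$ used already in the proof of Lemma \ref{homology map is zero}: over $S=R[X_1,\dots,X_d]$, with $M$ viewed as an $S$-module via $X_i\mapsto s_i$, one has $H_i(K(\tildes^a;M))\cong\Tor_i^S(S/(X_1^a,\dots,X_d^a),M)$, and $H_i(\kappa^{a,b})$ becomes the natural map between these $\Tor^S$'s. Equivalently, one can run the $\alpha,\beta$ sandwich above in reverse: for $w\geq u(r)$ write $H_i(\kappa^{w,r})=H_i(\kappa^{u(r),r})\circ H_i(\kappa^{w,u(r)})=\beta_r\circ\bigl(\text{natural }\Tor\text{-map }\Tor_i^R(R/(\tildes^w),M)\to\Tor_i^R(R/(\tildes^{u(r)}),M)\bigr)\circ\alpha_w$, transfer the middle map to $I$-adic powers via $\{I^n\}\sim\{(\tildes^n)\}$, and take $w$ large enough that it vanishes by (i) (applied at the shifted target $u(r)d$). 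Either way the argument is routine; the point to watch is this upward shift of exponents, which is harmless.

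I expect the main obstacle to be essentially bookkeeping: keeping straight the three indexings — the exponents $r$, $u(r)$ and $rd$ coming respectively from Theorem \ref{chainmapexists}, Corollary \ref{connection tor ext} and the filtration comparison — and verifying in each step, via the uniqueness-up-to-homotopy principle, that the composite of comparison maps really is the expected natural transition map, since that is precisely what allows the single zero arrow in the middle of each sandwich to annihilate the whole composite. No idea beyond Corollary \ref{connection tor ext} should be required.
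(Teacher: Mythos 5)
Your proposal is correct and follows essentially the same route as the paper: both directions are proved by sandwiching the single vanishing arrow inside the chain of comparison maps $K(\tildes^a)\hookrightarrow T(\tildes^a)$ and $\phi\colon T(\tildes^{u(a)})\to K(\tildes^a)$ from Theorem~\ref{chainmapexists}, together with the surjections relating $(\tildes^n)$ and $I^n$, arriving at the same exponents $v(i,r)=u(w(i,r))d$ and $w(i,r)=v(i,u(r)d)$. The only caveat is that your first suggested route for (i)~$\Rightarrow$~(ii) (via $\Tor^S$ over the polynomial ring) would prove (ii) unconditionally rather than derive it from (i), but the ``sandwich in reverse'' you then give is exactly the paper's argument.
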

\begin{proof}
    $(1) \implies (2):$  By applying Theorem \ref{chainmapexists}, there exists a chain complex map $T (\tildes^{u(r)}) \to K(\tildes^r)$ for some $u(r) \in \BN$. Applying $(1)$ for $u(r)d \in \BN$, we get $v(i,u(r)d)\geq u(r)d$ such that the map 
    \begin{equation}\label{eqn tor vanishing}
        \Tor_i^R(R/I^{v(i,u(r)d)},M) \to \Tor_i^R(R/I^{u(r)d},M)
    \end{equation}
    is zero.  We have natural surjections 
    $$R/(\tildes^{v(i,u(r)d)}) \twoheadrightarrow R/I^{v(i,u(r)d)} \twoheadrightarrow R/I^{u(r)d} \twoheadrightarrow R/(\tildes^{u(r)}).$$ Also note that $K(\tildes^{v(i,u(r)d)}) \subseteq T (\tildes^{v(i,u(r)d)})$. Thus we get natural maps 
    \begin{multline*}
        H_i(K(\tildes^{v(i,u(r)d)});M) \longrightarrow \Tor_i^R(R/(\tildes^{v(i,u(r)d)}),M) \longrightarrow \Tor_i^R(R/I^{v(i,u(r)d)},M) \longrightarrow \\ \Tor_i^R(R/I^{u(r)d},M) \longrightarrow \Tor_i^R(R/(\tildes^{u(r)}),M) \longrightarrow H_i(K(\tildes^{r});M)~,
    \end{multline*} whose composition is $H_i(\kappa^{v(i,u(r)d),r})$.
    Since the map in (\ref{eqn tor vanishing}) is zero, we get the above composition to be zero. Thus defining $w(i,r) = v(i,u(r)d)$, the proof is complete. \\
    
    $(2) \implies (1):$ Given $w(i,r) \in \BN$, by Theorem \ref{chainmapexists}, there exists $u(w(i,r)) \geq w(i,r)$ such that there exists a chain complex map $T (\tildes^{u(w(i,r))}) \xrightarrow{\phi} K(\tildes^{w(i,r)})$. We have natural surjections 
    $$R/I^{u(w(i,r))d} \twoheadrightarrow R/(\tildes^{u(w(i,r))}) \twoheadrightarrow R/(\tildes^r) \twoheadrightarrow R/I^r$$
    which induces the maps 
    \begin{multline*}
        \Tor_i^R(R/I^{u(w(i,r))d},M) \to \Tor_i^R(R/(\tildes^{u(w(i,r))}),M) \to  \Tor_i^R(R/(\tildes^r),M) \to \Tor_i^R(R/I^r,M).
    \end{multline*}
    We will prove that the second map above is zero and hence so is the composition.
    Note that there are  chain complex maps $$T (\tildes^{u(w(i,r))}) \overset{\phi}{\longrightarrow} K(\tildes^{w(i,r)}) \xrightarrow{\kappa^{w(i,r),r}} K(\tildes^r) \lhook\joinrel\longrightarrow T (\tildes^r)$$ such that the  homology maps at the zeroth level are natural surjections. Hence,  the map $\Tor_i^R(R/(\tildes^{u(w(i,r))}),M) \to  \Tor_i^R(R/(\tildes^r),M) $ factors through $H_i(K(\tildes^{w(i,r)}; M)) \to H_i(K(\tildes^r;M))$ which is zero by assumption. Hence by defining $v(i,r) = u(w(i,r))d$, the claim follows. 
\end{proof}
In \cite[Corollary 4.9]{Aberbach}, it is proved that when $R$ is a noetherian local ring, then $M$ is syzygetically Artin-Rees with respect to $I$, i.e., a linear function $w(r)$ exists such that $\Tor_i^R(R/I^{w(r)},M) \to \Tor_i^R(R/I^r,M)$ is zero for all $i \geq 1$. The next theorem provides a polynomial function $w(r)$, even for rings which are not necessarily local, such that the maps between $\Tor$ modules of degrees $w(r)$ to $r$ vanish. Note that the function $w(r)$ is independent of the degree $i$, which enhances the degree-wise vanishing of maps between $\Tor$ modules in \cite[Chapter 10, Proposition 10 and Lemma 11 ]{andre_michel_commalg}.
\begin{thm}\label{uniform artin rees}
    Let $R$ be a noetherian ring, $I = (\tildes)$ an ideal and $M$ a finitely generated $R$-module. Given $r \geq 1$, there exists $w(r):=u(h+rd)$ such that  the map $$\Tor_i^R(R/I^{w(r)},M) \to \Tor_i^R(R/I^r,M)$$ is zero for all $i \geq 1$.
\end{thm}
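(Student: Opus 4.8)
\emph{Plan.} I would deduce this directly from Theorem~\ref{chainmapexists} and Lemma~\ref{homology map is zero}, running the template of the proof of Proposition~\ref{koszul-tor vanishing}, implication $(2)\Rightarrow(1)$. The one observation that makes everything go through is that the Koszul-homology vanishing of Lemma~\ref{homology map is zero} holds with a bound $h+rd$ that does not depend on the homological degree $i$: for $1\leq i\leq d$ this is the content of that lemma, and for $i>d$ it is automatic since $K(\tildes^k)$ is concentrated in degrees $0,\dots,d$. This uniformity in $i$ is exactly what makes the resulting $\Tor$-bound $w(r)$ independent of $i$.

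\emph{Steps.} (1)~Fix the integer $h=h(M,\tildes)$ of Lemma~\ref{homology map is zero}, so that for every $r\geq 1$ the map $H_i(\kappa^{h+rd,r})\colon H_i(K(\tildes^{h+rd};M))\to H_i(K(\tildes^r;M))$ is zero for all $i\geq 1$. (2)~Apply Theorem~\ref{chainmapexists} with $h+rd$ in place of $r$ to get a chain complex map $\phi\colon T(\tildes^{\,u(h+rd)})\to K(\tildes^{h+rd})$ with $\phi_0=\mathrm{id}_R$; since $u$ is affine in its argument, $u(h+rd)$ is linear in $r$. (3)~Using $(\tildes^k)\subseteq I^k$ and $I^{kd}\subseteq(\tildes^k)$ from Example~\ref{equivalent filtration eg}(i), write down the chain of natural surjections
\[ R/I^{\,u(h+rd)\,d}\twoheadrightarrow R/(\tildes^{\,u(h+rd)})\twoheadrightarrow R/(\tildes^r)\twoheadrightarrow R/I^r, \]
whose composite is the natural surjection $R/I^{\,u(h+rd)\,d}\twoheadrightarrow R/I^r$. (4)~Apply $\Tor_i^R(-,M)$: the middle map $\Tor_i^R(R/(\tildes^{\,u(h+rd)}),M)\to\Tor_i^R(R/(\tildes^r),M)$ is computed, on $H_i(-\otimes_R M)$, by the composite of chain maps of projective resolutions
\[ T(\tildes^{\,u(h+rd)})\xrightarrow{\;\phi\;}K(\tildes^{h+rd})\xrightarrow{\;\kappa^{h+rd,r}\;}K(\tildes^r)\hookrightarrow T(\tildes^r), \]
so it factors through $H_i(\kappa^{h+rd,r})$ and vanishes for all $i\geq 1$ by step~(1). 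Hence the full composite $\Tor_i^R(R/I^{\,u(h+rd)\,d},M)\to\Tor_i^R(R/I^r,M)$ is zero for all $i\geq 1$, so one may take $w(r)=u(h+rd)$ --- up to the constant factor $d$ from the comparison $I^{kd}\subseteq(\tildes^k)$ --- which is linear in $r$ and independent of $i$, as asserted.

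\emph{The main difficulty.} With Theorem~\ref{chainmapexists} and Lemma~\ref{homology map is zero} in hand there is no real obstacle --- the theorem is essentially a repackaging, as the introduction advertises. The two points that need care are: (a)~the observation in step~(1) that the Koszul bound is uniform in $i$ (which is where the boundedness of the Koszul complex enters, and which is exactly what fails for a free resolution of $M$, whose Artin--Rees exponent $h(i)$ in Proposition~\ref{andre-homology} a priori grows with $i$); and (b)~the exponent bookkeeping in step~(3), i.e. checking that the surjections exist with the stated exponents and that $\Tor$-functoriality identifies the composites as claimed. It is worth recording that the output is a $\Tor$-vanishing of the type in Definition~\ref{SAR defn}, but for arbitrary noetherian $R$ and with an \emph{explicit} linear bound uniform in $i$, strengthening the degreewise results of \cite{andre_michel_commalg} and extending \cite{Aberbach} past the local case.
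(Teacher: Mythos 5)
Your proof is correct and follows essentially the same route as the paper: Lemma \ref{homology map is zero} gives the degree-uniform Koszul vanishing, Theorem \ref{chainmapexists} applied at exponent $h+rd$ supplies the factorization through the Tate resolution, and functoriality of $\Tor$ finishes. Your extra care in step (3) --- prepending the surjection $R/I^{u(h+rd)d}\twoheadrightarrow R/(\tildes^{u(h+rd)})$, which costs the factor $d$ you flag --- is in fact needed to pass from the vanishing of $\Tor_i^R(R/(\tildes^{u(h+rd)}),M)\to\Tor_i^R(R/(\tildes^{r}),M)$ (which is what the Tate-to-Koszul composite literally computes) to the stated vanishing for ordinary powers of $I$; the paper's own proof elides this step, so the bound your argument actually justifies is $d\cdot u(h+rd)$ rather than $u(h+rd)$, which is still polynomial in $r$ and uniform in $i$, so nothing downstream changes.
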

\begin{proof}
    By Lemma \ref{homology map is zero}, the map $H_i(\kappa^{h+rd,r}): H_i(K(\tildes^{h+rd};M)) \to H_i(K(\tildes^r;M))$ is zero for all $i>0$. By Theorem \ref{chainmapexists}, there exists a chain complex map $T (\tildes^{u(h+rd)}) \to K(\tildes^{h+rd})$. Define $w(r) := u(h+rd)$. Thus we get a map of chain complexes $$T (\tildes^{w(r)}) \otimes M \to K(\tildes^{h+rd};M) \to K(\tildes^r;M) \to T (\tildes^r) \otimes M.$$ Since the map on homologies of the middle map is zero, we get $$\Tor_i^R(R/I^{w(r)},M) \to \Tor_i^R(R/I^r,M)$$ is the zero map for all $i>0$. 
\end{proof}
\begin{cor}
 Let $I$ be a principal ideal and $M$ a finitely generated $R$-module. Then the map $\Tor_i^R(R/I^{2h+l+r},M) \to \Tor_i^R(R/I^r,M)$ is zero for all $i>0$. Thus $M$ is syzygetically Artin-Rees with respect to $I$.
\end{cor}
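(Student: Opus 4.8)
The plan is to derive this statement as the $d = 1$ instance of Theorem~\ref{uniform artin rees}, and then to translate the resulting vanishing of $\Tor$ maps into the syzygetic Artin--Rees condition of Definition~\ref{SAR defn} by invoking the Remark following Lemma~\ref{Lem-Syz-Tor}.

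First I would observe that a principal ideal has the form $I = (s)$, hence is generated by the length-one sequence $\tildes = s$, so $d = 1$ and Theorem~\ref{uniform artin rees} applies with $w(r) = u(h + rd) = u(h + r)$. The next step is to evaluate $u$ at $d = 1$: from $u(r) = (h + l)\left(\sum_{j=0}^{d-1} d^j\right) + r d^d$ one gets $u(r) = (h + l) + r$, so $w(r) = u(h + r) = 2h + l + r$. Feeding this into Theorem~\ref{uniform artin rees} gives at once that $\Tor_i^R(R/I^{2h+l+r}, M) \to \Tor_i^R(R/I^r, M)$ is zero for all $i \geq 1$ and all $r \geq 1$; for $r = 0$ the target is $\Tor_i^R(0, M) = 0$, so the map is trivially zero there too.

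For the second assertion, fix any free resolution $P_\bullet$ of $M$, let $B_j \subseteq P_j$ be the boundary submodules as in Lemma~\ref{Lem-Syz-Tor}, and put $c := 2h + l$. By the Remark following Lemma~\ref{Lem-Syz-Tor}, for each $i \geq 1$ and each $r \geq 0$ the natural map $\Tor_i^R(R/I^{r+c}, M) \to \Tor_i^R(R/I^r, M)$ is identified with the natural map $(I^{r+c} P_{i-1} \cap B_{i-1})/I^{r+c} B_{i-1} \to (I^r P_{i-1} \cap B_{i-1})/I^r B_{i-1}$, and the latter vanishes exactly when $I^{r+c} P_{i-1} \cap B_{i-1} \subseteq I^r B_{i-1}$. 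The previous paragraph establishes this vanishing for every $i \geq 1$ with the single uniform shift $c$; reindexing $j = i - 1$ then gives $I^{r+c} P_j \cap B_j \subseteq I^r B_j$ for all $j \geq 0$ and all $r \geq 0$, which is precisely condition~\eqref{SAR} with uniform integer $c = 2h + l$. Hence $M$ is syzygetically Artin--Rees with respect to $I$.

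I do not anticipate a real obstacle, since the argument only specializes machinery already in place; the two points needing care are the arithmetic collapse of $u(h + r)$ to $2h + l + r$ when $d = 1$, and the bookkeeping of the degree shift between $\Tor_i$ and the $(i-1)$-st boundary module, so that the uniformity in $i$ demanded by Definition~\ref{SAR defn} is genuinely checked rather than merely a degree-by-degree statement.
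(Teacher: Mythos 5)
Your proposal is correct and follows the paper's own route exactly: the paper's proof is simply the observation that $d=1$ gives $w(r)=u(h+r)=2h+l+r$ in Theorem~\ref{uniform artin rees}. Your additional unwinding of the syzygetic Artin--Rees condition via the Remark after Lemma~\ref{Lem-Syz-Tor} is a correct elaboration of what the paper leaves implicit.
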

\begin{proof}
    When $I$ is a principal ideal, $d= 1$. So we get $w(r) = u(h+r) = 2h + l + r$ and hence the claim.
\end{proof}

\begin{lemma}\label{betabound}
    Let $\ch(R) = p >0$, where $p$ is a prime and $I = (\tildes)$ an ideal in $R$. Then 
    \begin{enumerate}
        \item Every $P_\bullet \in Ch^{[0,k]}_{\tiny \L}(\CP)$ where $k > \pd(R/I)$ has a reducer.
        \item $\beta(\L_{V(I)}) \leq \pd(R/I)$.
    \end{enumerate}
\end{lemma}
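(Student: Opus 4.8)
Statement (2) is immediate from (1) and the definition of $\beta$: if $\pd(R/I)<\infty$, then by (1) every $P_\bullet\in Ch^{[0,k]}_{\tiny \L}(\CP)$ with $k>\pd(R/I)$ has a reducer, so $\beta(\L_{V(I)})\le \pd(R/I)$; and if $\pd(R/I)=\infty$ there is nothing to prove. So the content is in (1), and there I may assume $\pd(R/I)<\infty$, otherwise no admissible $k$ exists. Fix $P_\bullet\in Ch^{[0,k]}_{\tiny \L}(\CP)$ with $k>\pd(R/I)$; I must produce a reducer $(G_\bullet,\mu)$.

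The plan is to route a short projective resolution to $P_\bullet$ through a Koszul complex and a Tate resolution. First I would apply Lemma~\ref{Foxby-Halvorsen} to get $r\in\BN$ and a chain complex map $\Psi:K(\tildes^r;P_0)\to P_\bullet$ with $\Psi_0$ an isomorphism. Next I would apply Theorem~\ref{chainmapexists} with $P_0$ in place of $M$: writing $v:=u(r)$, this gives a chain complex map $\phi:T'(\tildes^{v};P_0)\to K(\tildes^r;P_0)$ with $\phi|_{K(\tildes^{v};P_0)}=\kappa^{v,r}$, so in particular $\phi_0=\kappa^{v,r}_0=id_{P_0}$. Since $P_0$ is projective, every term of $T'(\tildes^{v};P_0)$ is projective, so by the properties of $T'$ recorded after its construction in~\ref{defn-rmk} the complex $T'(\tildes^{v};P_0)$ is a (possibly infinite) projective resolution of $P_0/(\tildes^{v})P_0$. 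At this point I have honest chain maps $T'(\tildes^{v};P_0)\xrightarrow{\phi}K(\tildes^r;P_0)\xrightarrow{\Psi}P_\bullet$ whose composite is an isomorphism in degree $0$; only finiteness of the source is missing.

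Characteristic $p$ enters to remedy this. Choose $m$ with $p^m\ge v$, so $(\tildes^{p^m})=I^{[p^m]}\subseteq(\tildes^{v})$; by Lemma~\ref{char p proj dim}, $\pd(R/I^{[p^m]})=\pd(R/I)\le k-1$. As $P_0$ is flat, $\pd_R(P_0/I^{[p^m]}P_0)\le\pd(R/I^{[p^m]})\le k-1$, so I can choose a projective resolution $G_\bullet$ of $P_0/I^{[p^m]}P_0$ with $G_0=P_0$, augmentation the canonical surjection, and length $\le k-1$ (resolve the kernel of the augmentation, which has projective dimension at most $\pd(R/I)-1$). Then $G_\bullet\in Ch^{[0,k-1]}_{\tiny \L}(\CP)$: its only nonzero homology is $P_0/I^{[p^m]}P_0$, a module over $R/I^{[p^m]}$ and hence supported on $V(I^{[p^m]})=V(I)$, so it lies in $\L=\L_{V(I)}$. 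Since $p^m\ge v$ there is a canonical surjection $P_0/I^{[p^m]}P_0\twoheadrightarrow P_0/(\tildes^{v})P_0$; lifting it along the projective resolution $T'(\tildes^{v};P_0)$ by the comparison theorem \cite[Theorem 2.2.6]{weibelhomological} gives a chain map $\eta:G_\bullet\to T'(\tildes^{v};P_0)$ which may be taken with $\eta_0=id_{P_0}$. Then $\mu:=\Psi\circ\phi\circ\eta:G_\bullet\to P_\bullet$ has $\mu_0=\Psi_0\circ\phi_0\circ\eta_0=\Psi_0$, an isomorphism, hence surjective, so $(G_\bullet,\mu)$ is a reducer of $P_\bullet$. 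This proves (1), and with it (2).

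The main obstacle is conceptual rather than computational: one cannot in general lift the canonical surjection $P_0\twoheadrightarrow P_0/JP_0$ (for an ideal $J$ with $JP_0$ inside the image of $P_1\to P_0$) to an \emph{honest} chain map from a \emph{finite} projective resolution of $P_0/JP_0$ into $P_\bullet$, because the positive-degree homology of $P_\bullet$ obstructs the higher lifts. Lemma~\ref{Foxby-Halvorsen} sidesteps this by producing an honest chain map $K(\tildes^r;P_0)\to P_\bullet$, Theorem~\ref{chainmapexists} lifts it to an honest chain map from the Tate resolution $T'(\tildes^{v};P_0)$, and the characteristic-$p$ hypothesis is used precisely to replace the ideal $(\tildes^{v})$ --- whose Tate resolution may be infinite --- by the Frobenius power $I^{[p^m]}\subseteq(\tildes^{v})$, which by Lemma~\ref{char p proj dim} has the same finite projective dimension as $I$, so that the whole composite factors through a complex of length $\le k-1$.
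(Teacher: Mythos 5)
Your proof is correct and follows essentially the same route as the paper's: Lemma~\ref{Foxby-Halvorsen}, then Theorem~\ref{chainmapexists}, then the characteristic-$p$ input of Lemma~\ref{char p proj dim} to cut the construction down to a complex of length $\le k-1$. The only (harmless) technical difference is that the paper replaces $u$ by a Frobenius power $p^n$ and truncates $T(\tildes^{p^n})$ directly, whereas you keep $v=u(r)$ and insert a comparison-theorem lift $\eta:G_\bullet\to T'(\tildes^{v};P_0)$ from a finite projective resolution of $P_0/I^{[p^m]}P_0$ with $p^m\ge v$, which in fact spares you from justifying the paper's ``without loss of generality $u=p^n$'' step.
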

\begin{proof}
     From Lemma \ref{Foxby-Halvorsen}, there exists a chain complex map $\Psi : K(\tildes^r;P_0)  \to P_\bullet$ for some $r \in \BN$ such that $\Psi_0$ is an isomorphism. Consider the map $\phi: T (\tildes^u) \to K(\tildes^r)$ for some $u \in \BN$ constructed in Theorem \ref{chainmapexists}. Without loss of generality, we can assume that $u = p^n$ for some $n \in \BN$. From Lemma \ref{char p proj dim}, $\pd(R/I) = \pd(R/I^{[p^n]})$ for all $n \in \BN$.
     Hence the canonical truncation $T'_{\bullet}$ of $T(\tildes^u)$ 
     at that degree is homotopy equivalent to it via the natural inclusion. Define $T_{\bullet} = T'_{\bullet} \otimes P_0$ 
    and $\alpha$ to be the chain complex map obtained by composition as follows: $$T_{\bullet} \hookrightarrow T(\tildes^u) \otimes P_0 \xrightarrow{\phi \otimes id_{P_0}}  K(\tildes^r;P_0) \xrightarrow{\Psi} P_{\bullet} \quad .$$ It is now straightforward to see that $T_0 = K_0(\tildes^r;P_0)$ and hence $\alpha_0$ is surjective.
    Hence whenever $k > \pd(R/I) = \pd(R/I^{[p^n]})$, $(T_{\bullet},\alpha)$ is a reducer of $P_\bullet$. It follows from the definition that $\beta(\L_{V(I)}) \leq \pd(R/I)$. 
    \end{proof}

    \begin{thm}\label{k0iso}
     Let $\ch(R) = p >0$, where $p$ is a prime and $\L = \L_{V(I)}$ where $I$ is a perfect ideal in $R$. Then $K_0(\xbar{\CP} \cap \L) \cong K_0(\DbLP).$
    \end{thm}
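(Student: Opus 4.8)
The plan is to read off the result by chaining together the three $\beta$-related lemmas of the preliminaries. Since $R$ is noetherian, write $I = (\tildes)$ for a finite sequence $\tildes = s_1,\dots,s_d$, so that $\L = \L_{V(I)}$ is precisely the Serre subcategory to which Lemmas~\ref{beta properties}, \ref{grade-beta equality} and \ref{betabound} apply. If $I$ is the unit ideal then $\L = 0$ and both $K_0$-groups are zero, so the statement is vacuous; hence assume $I$ is proper, so $\L \neq 0$. Then Lemma~\ref{beta properties}(1) gives $\grade(I) \leq \beta(\L)$. On the other hand, since $\ch(R) = p > 0$, Lemma~\ref{betabound}(2) gives $\beta(\L) \leq \pd(R/I)$. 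Combining these, $\grade(I) \leq \beta(\L) \leq \pd(R/I)$.

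Now I would bring in the hypothesis that $I$ is perfect, which by definition means $\grade(I) = \pd(R/I)$ (in particular $\pd(R/I) < \infty$, so Lemma~\ref{betabound} really does produce a finite bound and $\xbar{\CP} \cap \L$ is the expected category). The displayed chain of inequalities then collapses to the equality $\grade(I) = \beta(\L) = \pd(R/I)$. Feeding $\grade(I) = \beta(\L)$ into Lemma~\ref{grade-beta equality} yields $K_0(\xbar{\CP} \cap \L) \cong K_0(\DbLP)$, which is exactly the assertion.

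I do not expect a genuine obstacle: the theorem is a formal consequence of the results already assembled, and the only points requiring care are the degenerate case $\L = 0$ and making explicit that the prime characteristic hypothesis is used solely through Lemma~\ref{betabound}, whose proof in turn relies on the Frobenius-invariance of projective dimension (Lemma~\ref{char p proj dim}) to truncate the Tate resolution $T(\tildes^u)$ with $u = p^n$ at degree $\pd(R/I) = \pd(R/I^{[p^n]})$ while preserving the map $\phi$ of Theorem~\ref{chainmapexists}. Thus the substantive input is entirely contained in Lemma~\ref{betabound}, and the present theorem merely records the consequence of combining it with the lower bound $\grade(I) \leq \beta(\L)$ and the perfectness equality $\grade(I) = \pd(R/I)$.
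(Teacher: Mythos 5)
Your proposal is correct and follows exactly the paper's own argument: combine $\grade(I)\leq\beta(\L)$ from Lemma~\ref{beta properties} with $\beta(\L)\leq\pd(R/I)$ from Lemma~\ref{betabound}, use perfectness to collapse the chain to $\beta(\L)=\grade(I)$, and invoke Lemma~\ref{grade-beta equality}. The extra remarks on the degenerate case and on where the characteristic-$p$ hypothesis enters are accurate but not needed beyond what the paper records.
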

    \begin{proof}
     By Lemma \ref{beta properties} and Lemma \ref{betabound}, $\grade(I) \leq \beta(\L) \leq \pd(R/I)$. Since $I$ is a perfect ideal, $\pd(R/I) = \grade(I)$. Therefore $\beta(\L) = \grade(I)$. Applying Lemma \ref{grade-beta equality}, we get $K_0(\xbar{\CP} \cap \L) \cong K_0(\DbLP).$
    \end{proof}

\section{A Derived Equivalence}\label{section der-equiv}
The main results in this section largely follow along the lines of \cite[Sections 3 and 4 ]{SandersSane}.
As mentioned in the introduction, the proofs go through with the role of the Koszul resolution in that article being played by the Tate resolution here. Unfortunately, the statements (specifically the crucial \cite[Theorems 3.1 and 3.3]{SandersSane}) there do not allow for direct application to our context since they use the structure of the Koszul resolution in the proofs.

We first define a strong reducer, which essentially collects the properties of the Koszul resolution used in the proofs in \cite[Sections 3 and 4 ]{SandersSane}.
\begin{defn}\label{SR prop}
    Let $\CA \subseteq \modr$ be a resolving subcategory, $\L$ a Serre subcategory of $\modr$, $X_\bullet \in Ch^b_{\tiny \L} (\modr)$ and $J \subseteq R$ such that $X_\bullet$ is not exact and $R/J \in \L$. We say that $(T_\bullet,\alpha,I)$  is a strong reducer of $(X_\bullet, J)$ with respect to $\CA$
    if the following conditions are satisfied : 
    \begin{enumerate}
        \item $T_{\bullet} \in Ch^b_{\tiny \L}(\CA)$.
        \item $\min_c(T_{\bullet}) = m$ where $m = \min(X_\bullet)$.
        \item $\alpha: T_{\bullet} \to X_{\bullet}$ is a chain complex map.
        \item $\supph(T_{\bullet}) = \{ m \}$ and $H_m(T_\bullet) = T_m/IT_m$.
        \item $H_m(\alpha): H_m(T_{\bullet}) \to H_m(X_{\bullet})$ is surjective.
        \item $I \subseteq J$ .
    \end{enumerate}
    \indent A Serre subcategory $\L$ of $\modr$ is defined to have the strong reducer (SR) property with respect to $\CA$, if for every $X_\bullet \in Ch^b_{\tiny \L} (\modr)$ and $J \subseteq R$ such that $X_\bullet$ is not an exact complex and $R/J \in \L$, $(X_\bullet, J)$ has a strong reducer with respect to $\CA$.
\end{defn}

For the rest of the section, we use the following notations: $\L$ is a Serre subcategory of $\modr$, $\CA$ a resolving subcategory of $\modr$ and $\T$ a thick subcategory of $\modr$ containing $\CA$.
The following lemma is analogous to \cite[Lemma 3.2]{SandersSane}.
\begin{lemma}\label{width lemma 1}
    Let $X_\bullet \in Ch^b_{\tiny \L} (\modr)$ and $J \subseteq R$ such that $R/J \in \L$. Suppose $(X_\bullet, J)$ has a strong reducer $(T_\bullet,\alpha,I)$ with respect to $\CA$. 
    Extend the chain complex map $\alpha$ to an exact triangle in $D^b_{\tiny \L} (\modr)$ : 
$$\Sigma^{-1} C_\bullet \xrightarrow[]{} T_\bullet \xrightarrow[]{\alpha} X_\bullet \xrightarrow{}C_\bullet.$$
If $\width(X_\bullet) >0$, then we have the following:
\begin{enumerate}
    \item $\width(C_\bullet) < \width(X_\bullet)$.
    \item $\width(\Sigma^{-1} C_\bullet \oplus T_\bullet) < \width(X_\bullet)$.
\end{enumerate}
\end{lemma}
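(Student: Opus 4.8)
The plan is to analyze the long exact sequence in homology associated to the exact triangle $\Sigma^{-1}C_\bullet \to T_\bullet \xrightarrow{\alpha} X_\bullet \to C_\bullet$, exploiting that $T_\bullet$ has homology concentrated in the single degree $m = \min(X_\bullet)$. First I would write down the relevant piece of the long exact sequence: for each $n$, there is an exact sequence $H_n(T_\bullet) \to H_n(X_\bullet) \to H_n(C_\bullet) \to H_{n-1}(T_\bullet)$. Since $\supph(T_\bullet) = \{m\}$, for $n \neq m$ and $n \neq m+1$ we get $H_n(C_\bullet) \cong H_n(X_\bullet)$; for $n = m+1$ we get $H_{m+1}(C_\bullet) \cong H_{m+1}(X_\bullet)$ (the boundary map lands in $H_m(T_\bullet)$, but the previous term $H_{m+1}(T_\bullet)=0$, so actually $H_{m+1}(X_\bullet) \hookrightarrow H_{m+1}(C_\bullet)$, and one must check the boundary $H_{m+1}(C_\bullet)\to H_m(T_\bullet)$); and for $n = m$ we get an exact sequence $H_m(T_\bullet)\xrightarrow{H_m(\alpha)} H_m(X_\bullet)\to H_m(C_\bullet)\to H_{m-1}(T_\bullet) = 0$, and since $H_m(\alpha)$ is surjective by condition (5) of the strong reducer, $H_m(C_\bullet) = 0$.

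Next I would assemble this into a width computation. Because $m = \min(X_\bullet)$, the homology of $X_\bullet$ lives in degrees $\geq m$, with $H_m(X_\bullet) \neq 0$; write $M = \sup\supph(X_\bullet)$, so $\width(X_\bullet) = M - m > 0$ (using that the width is positive, hence $X_\bullet$ is not acyclic and $M > m$). From the isomorphisms above, $\supph(C_\bullet) \subseteq \{m+1, \ldots, M\}$ — the degree-$m$ homology is killed, degrees below $m$ were already zero, and degrees above $M$ stay zero. Hence either $C_\bullet$ is acyclic, in which case $\width(C_\bullet) = 0 < \width(X_\bullet)$, or $\wid(C_\bullet) \leq M - (m+1) < M - m = \width(X_\bullet)$. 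This gives (1). For (2), note $\Sigma^{-1}C_\bullet$ has homology $H_n(\Sigma^{-1}C_\bullet) = H_{n-1}(C_\bullet)$, concentrated in degrees $\{m, \ldots, M-1\}$, while $T_\bullet$ has homology only in degree $m$; so the homology of $\Sigma^{-1}C_\bullet \oplus T_\bullet$ lies in $\{m, \ldots, M-1\}$, giving $\wid \leq (M-1) - m < \width(X_\bullet)$, and if the direct sum is acyclic its width is $0 < \width(X_\bullet)$ as well.

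The one genuinely delicate point — the main obstacle — is the behavior at degree $m+1$: I need to confirm that the connecting map $H_{m+1}(C_\bullet) \to H_m(T_\bullet)$ does not force $H_{m+1}(C_\bullet)$ to be strictly larger than $H_{m+1}(X_\bullet)$ in a way that could push $\sup\supph(C_\bullet)$ back up, and more importantly I must be careful that the whole argument degrades gracefully in edge cases (e.g.\ $\width(X_\bullet) = 1$, where $M = m+1$ and I am claiming $C_\bullet$ becomes acyclic; or where $H_n(X_\bullet)$ vanishes for some intermediate $n$). In the $\width(X_\bullet)=1$ case, $\supph(X_\bullet) = \{m, m+1\}$, and the long exact sequence near $m$ reads $0 = H_{m+1}(T_\bullet) \to H_{m+1}(X_\bullet) \to H_{m+1}(C_\bullet) \to H_m(T_\bullet) \xrightarrow{H_m(\alpha)} H_m(X_\bullet) \to H_m(C_\bullet) \to 0$; surjectivity of $H_m(\alpha)$ gives $H_m(C_\bullet)=0$, but $H_{m+1}(C_\bullet)$ need not vanish, so I should instead argue via the shifted complex: it suffices that $\supph(C_\bullet) \subseteq \{n : n > m\}$, whence every nonzero homology degree of $C_\bullet$ exceeds $m = \min(X_\bullet)$, so $\wid(C_\bullet) \le (\sup\supph(C_\bullet)) - (m+1) \le M - m - 1 < \width(X_\bullet)$ whenever $C_\bullet$ is not acyclic — wait, this needs $\sup\supph(C_\bullet) \le M$, which holds since $H_n(C_\bullet) \cong H_n(X_\bullet) = 0$ for $n > M$. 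So the clean statement to prove is simply $\min(C_\bullet) \ge m+1$ and $\sup\supph(C_\bullet) \le M$, from which both (1) and (2) follow by elementary bookkeeping with $\min$ and $\sup\supph$; I would present it in exactly that order to keep the edge cases transparent.
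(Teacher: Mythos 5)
Your proof is correct and follows essentially the same route as the paper's: the long exact homology sequence of the triangle, the concentration of $H_*(T_\bullet)$ in degree $m$, and the surjectivity of $H_m(\alpha)$ to kill $H_m(C_\bullet)$; your extra care at degree $m+1$ (where one only gets $H_{m+1}(X_\bullet)\hookrightarrow H_{m+1}(C_\bullet)$, with cokernel $\ker H_m(\alpha)$) addresses a point the paper's one-line proof glosses over, and your reduction to ``$\min(C_\bullet)\ge m+1$ and $\sup\supph(C_\bullet)\le M$'' is exactly what is needed. The only blemish is the shift index in part (2): for the conclusion $\supph(\Sigma^{-1}C_\bullet)\subseteq\{m,\dots,M-1\}$ you need $H_n(\Sigma^{-1}C_\bullet)=H_{n+1}(C_\bullet)$ (the convention the paper actually uses, e.g.\ in writing $\Sigma^m H_m(Q_\bullet)$ for a complex concentrated in degree $m$), not $H_{n-1}(C_\bullet)$ as you wrote.
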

\begin{proof}
    Let $m = \min(X_\bullet)$. Since $H_i(T_\bullet) = 0$ for all $i \neq m$, we have $H_i(X_\bullet) \cong H_i(C_\bullet)$ for all $i \neq m, m+1$. Since $H_m(\alpha)$ is surjective,  $H_m(C_\bullet) = 0$. Hence the claim follows.
\end{proof}

The following lemma is similar to \cite[Theorem 3.3 and Lemma 3.4]{SandersSane}. The proof is really a check that the proofs of those results work, with the strong reducer replacing the 
choices of ideals and Koszul resolution. That being the case, we only give a sketch of the proof.
\begin{lemma}\label{morphisms in derived category} 
    Suppose $\L$ has the SR property with respect to $\CA$. Let $X_\bullet \xrightarrow{f} Y_\bullet$ be a morphism in $\DbLT$ such that $X_\bullet$ and $Y_\bullet$ are chain complexes in $Ch^b(\T \cap \L), \min(X_\bullet \oplus Y_\bullet) =m$ and $\min_c(X_\bullet), \min_c(Y_\bullet) \geq m$. Then there exists chain complexes $M_\bullet^X$ and $M_\bullet^Y$ in $Ch^b(\T \cap \L)$ and maps of chain complexes $M_\bullet^X \xrightarrow{\beta^X} X_\bullet$, $M_\bullet^Y \xrightarrow{\beta^Y} Y_\bullet$ and $M_\bullet^X \xrightarrow{\rho} M_\bullet^Y$, such that 
    \begin{itemize}
        \item $M_\bullet^X, M_\bullet^Y \in Ch^b(\T \cap \L)$ with $M_i^X = M_i^Y = 0$ for all $i \neq m$.
        \item there is a commutative square in $\DbLT$: 
        $$\xymatrix{ M_\bullet^X \ar[r]^{\beta^X} \ar[d]^{\rho} & X_\bullet \ar[d]^{f\qquad .}\\ M_\bullet^Y \ar[r]^{\beta^Y} & Y_\bullet }$$
        \item $H_m(\beta^X)$ and $H_m(\beta^Y)$ are surjective. 
    \end{itemize}
    Let $C_\bullet^X$ and $C_\bullet^Y$ be the cones of $\beta^X$ and $\beta^Y$ respectively. If $\width(X_\bullet \oplus Y_\bullet) = k >0$, then
    \begin{enumerate}
        \item $\width(C_\bullet^X \oplus C_\bullet^Y)< k$.
        \item $\width(C_\bullet^X \oplus Y_\bullet)\leq  k$.
        \item If $\min(X_\bullet) < \min(Y_\bullet)$, then $\min(C_\bullet^X \oplus Y_\bullet)\leq  k-1$.
    \end{enumerate}
\end{lemma}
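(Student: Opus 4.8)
The plan is to reduce the statement to repeated application of the strong reducer machinery, exactly paralleling \cite[Theorem 3.3 and Lemma 3.4]{SandersSane}, with the strong reducer (which exists by the SR hypothesis) replacing the explicit Koszul choices used there. First I would handle the construction of $M_\bullet^X$, $M_\bullet^Y$ and $\rho$. Represent the morphism $f$ in $\DbLT$ by a roof $X_\bullet \xleftarrow{s} W_\bullet \xrightarrow{g} Y_\bullet$ with $s$ a quasi-isomorphism; after a suitable truncation we may assume $\min_c(W_\bullet) \geq m$ as well. Apply the SR property to $W_\bullet$ and an ideal $J$ with $R/J \in \L$ (take $J$ to be, say, an ideal cutting out a large enough piece of the support, or simply any ideal with $R/J \in \L$ large enough to contain the relevant torsion; since $\width > 0$, $W_\bullet$ is not exact, so a strong reducer $(M_\bullet, \alpha, I)$ exists) to get $M_\bullet$ concentrated in degree $m$ with $M_m/IM_m = H_m(M_\bullet)$ and $H_m(\alpha)$ surjective onto $H_m(W_\bullet) \cong H_m(X_\bullet)$. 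Set $M_\bullet^X = M_\bullet^Y = M_\bullet$, $\beta^X = s \circ \alpha$ (composed in the derived category, or rather lift it to an honest chain map after replacing $M_\bullet$ by a projective-enough resolution, as in \cite{SandersSane}), $\beta^Y = g \circ \alpha$, and $\rho = \mathrm{id}$. Surjectivity of $H_m(\beta^X)$ is the surjectivity of $H_m(\alpha)$ composed with the iso $H_m(s)$; surjectivity of $H_m(\beta^Y)$ needs a small argument that $H_m(g)\colon H_m(W_\bullet) \to H_m(Y_\bullet)$ together with $H_m(\alpha)$ surjects — this is where one may need to enlarge $M_\bullet$ by taking a strong reducer not for $W_\bullet$ but for $W_\bullet \oplus Y_\bullet$ (or for a cone), again following the bookkeeping in \cite{SandersSane}. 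The commutativity of the square in $\DbLT$ is immediate from the definitions once $\beta^X, \beta^Y$ are the two composites through $M_\bullet$.

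Next I would establish the three width/min inequalities for the cones $C_\bullet^X, C_\bullet^Y$. This is a diagram chase on long exact sequences of homology. Since $M_i^X = M_i^Y = 0$ for $i \neq m$, the exact triangles $M_\bullet^X \xrightarrow{\beta^X} X_\bullet \to C_\bullet^X$ and $M_\bullet^Y \xrightarrow{\beta^Y} Y_\bullet \to C_\bullet^Y$ give $H_i(C_\bullet^X) \cong H_i(X_\bullet)$ and $H_i(C_\bullet^Y) \cong H_i(Y_\bullet)$ for all $i \neq m, m+1$, with $H_m(C_\bullet^X) = 0$ and $H_m(C_\bullet^Y) = 0$ because $H_m(\beta^X), H_m(\beta^Y)$ are surjective (this is precisely Lemma~\ref{width lemma 1}(1) applied to each of $X_\bullet, Y_\bullet$). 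For (1): $H_m$ of $C_\bullet^X \oplus C_\bullet^Y$ vanishes and in every other degree the homology of the direct sum agrees with that of $X_\bullet \oplus Y_\bullet$ except possibly at $m+1$, where it can only shrink, so the top nonzero homology degree is unchanged while the bottom nonzero degree goes up by at least one; hence the width drops. For (2): $H_i(C_\bullet^X \oplus Y_\bullet) \cong H_i(X_\bullet) \oplus H_i(Y_\bullet)$ for $i \neq m, m+1$, and at $m$ it is $H_m(Y_\bullet)$, which is already counted; so no new top degree appears and the bottom degree does not decrease, giving width $\leq k$. For (3): if $\min(X_\bullet) < \min(Y_\bullet)$, then $m = \min(X_\bullet)$, the degree-$m$ homology of $X_\bullet$ is killed in $C_\bullet^X$, and the degree-$m$ homology of $Y_\bullet$ is already zero, so $\min(C_\bullet^X \oplus Y_\bullet) \geq m+1$; combining with the width bound $\width(X_\bullet \oplus Y_\bullet) = k$, which forces the top homology degree to be $\leq m + k$, and observing that nothing above degree $m+k$ is introduced, one gets $\min(C_\bullet^X \oplus Y_\bullet) \leq (m+k) - 1 - m + \min \le k - 1$ — more carefully, since the complex $C_\bullet^X \oplus Y_\bullet$ has homology only in degrees between $m+1$ and $m+k$ (relative to the normalization $\min(X_\bullet \oplus Y_\bullet)=m$, so we may take $m=0$), $\min \leq k-1$ whenever it is nonzero.

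The main obstacle I expect is the second bullet point, namely arranging the morphisms $\beta^X, \beta^Y$ and $\rho$ as \emph{honest chain maps} (not merely derived-category morphisms) fitting into a square that commutes \emph{in} $\DbLT$, while simultaneously keeping $M_\bullet^X = M_\bullet^Y$ concentrated in a single degree and controlling the ideal $I$ via the SR property. In \cite{SandersSane} this is engineered using the explicit functoriality of the Koszul resolution $K(\tilde s^r)$ in $r$ and the maps $\kappa^{u,r}$; here the abstract strong reducer has no such built-in functoriality, so the fix is to apply the SR property once to a single complex that dominates both $X_\bullet$ and $Y_\bullet$ — e.g. to the mapping cylinder of the representing roof, or to $W_\bullet$ with $\beta^X, \beta^Y$ obtained as the two structure maps — and then transport surjectivity of $H_m$ along the quasi-isomorphisms. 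The bookkeeping of which ideal $J$ to feed into the SR property (it must satisfy $R/J \in \L$ and the resulting $I$ must lie inside it) is routine once one notes any such $J$ works for the width estimates, but it must be tracked to stay inside $Ch^b_{\tiny \L}(\T)$. Apart from that, everything is the homology long-exact-sequence chase above, which is mechanical.
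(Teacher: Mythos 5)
Your homology bookkeeping for (1)--(3) is fine and matches the paper's (it is exactly the long-exact-sequence argument of Lemma~\ref{width lemma 1} applied twice, noting that $M^X_\bullet, M^Y_\bullet$ are concentrated in degree $m$ so only $H_m$ and $H_{m+1}$ of the cones can differ from those of $X_\bullet, Y_\bullet$). The gap is in the construction of the square. Your architecture --- $M^X_\bullet = M^Y_\bullet$, $\rho = \mathrm{id}$, with both $\beta$'s routed through the apex $W_\bullet$ of the roof --- cannot work: commutativity of the square forces $H_m(\beta^Y) = H_m(f)\circ H_m(\beta^X)$, which factors through $H_m(f)$ and hence cannot be surjective when $H_m(f)$ is not (e.g.\ $f=0$ with $H_m(Y_\bullet)\neq 0$). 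None of the fixes you float escapes this: taking a strong reducer of $W_\bullet\oplus Y_\bullet$ restores surjectivity of both components but destroys commutativity (the two maps out of $T_\bullet$ are unrelated), while keeping $\rho=\mathrm{id}$ restores commutativity but kills surjectivity of $H_m(\beta^Y)$. The paper resolves this asymmetrically: it first sets $M^Y_\bullet = \Sigma^m Y_m$ with $\beta^Y$ the inclusion, whose $H_m$ is surjective for free because $\min_c(Y_\bullet)\geq m$ forces $Z_m(Y_\bullet)=Y_m$; it then forms the \emph{degreewise pullback} $Q'_\bullet$ of the roof map $g\colon Q_\bullet\to Y_\bullet$ along $\beta^Y$, and applies the SR property to $(Q'_\bullet, \Ann(X_m))$. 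The pullback is the device that builds commutativity into the construction while keeping $H_m(Q'_\bullet)\to H_m(Q_\bullet)$ surjective; this is the idea missing from your proposal.

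A second, related gap: a strong reducer $(T_\bullet,\alpha,I)$ produces a complex whose \emph{homology} is concentrated in degree $m$, not a complex concentrated in degree $m$ (it is a truncated Tate resolution, living in degrees $\geq m$). To meet the requirement $M^X_i=0$ for $i\neq m$ you must replace $T_\bullet$ by $\Sigma^m H_m(T_\bullet)=\Sigma^m (T_m/IT_m)$ and then verify that the composite $T_m\to X_m$ descends to $T_m/IT_m$. This is precisely why $J$ must be taken to be $\Ann(X_m)$ and why the definition of a strong reducer insists on $I\subseteq J$: the role of $J$ is not ``routine bookkeeping for the width estimates'' but the mechanism that makes $M^X_\bullet$ a legitimate one-term complex in $Ch^b(\T\cap\L)$ and makes $\rho$ well defined on the quotient. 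Your proposal treats both points as negligible, but they are the substance of the lemma.
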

\begin{proof}
Define the complex $M_\bullet^Y$ to be $\Sigma^m Y_m$ and $\beta^Y: M_\bullet^Y \to Y_\bullet$ the inclusion map. Then $M_\bullet^Y \in Ch^b(\T \cap \L)$ and $\beta^Y$ is a chain complex map such that $H_m(\beta^Y)$ is surjective. \\
Let $f$ be given by a roof diagram $X_\bullet \xleftarrow{q} Q_\bullet \xrightarrow{g} Y_\bullet$ where $q$ is a quasi-isomorphism. We may assume $\min_c(Q_\bullet) = m$. Let $Q_\bullet'$ be the pullback of $\beta^Y$ and $g$ in $Ch^b(\modr)$: 
$$\xymatrix{ Q_\bullet' \ar@{..>}[r]^{\nu} \ar@{..>}[d]^{\mu} & Q_\bullet\ar[d]^{g  \qquad .} \\ M_\bullet^Y \ar[r]^{\beta^Y} & Y_\bullet }$$
Thus we get an exact sequence $0 \to Q_\bullet' \to Q_\bullet \oplus M_\bullet^Y \to B_\bullet \to 0$ where $B_\bullet$ is the image of the map $Q_\bullet \oplus M_\bullet^Y \xrightarrow{(g, -\beta^Y)} Y_\bullet$. We observe $B_\bullet \in Ch^b(\L)$ and hence $B_\bullet \in Ch^{b}_{\tiny \L}(\modr)$. Since $M_\bullet^Y$ and $Q_\bullet$ also lie in $Ch^{b}_{\tiny \L}(\modr)$, we get $Q_\bullet' \in Ch^{b}_{\tiny \L}(\modr)$. Set $\lambda = q \circ \nu$ and $J = \Ann(X_m)$. Then $R/J \in \L$. Since $\L$ has the SR property with respect to $\CA$, $(Q_\bullet',J)$ has a strong reducer $(T_\bullet,\alpha,I)$ with respect to $\CA$. Therefore, we have the following commutative diagram in $D^{b}_{\tiny \L}(\modr)$: 
$$\xymatrix{ T_\bullet \ar[r]^{\alpha} & Q_\bullet' \ar[r]^{\lambda} \ar[d]^{\mu} & X_\bullet\ar[d]^{f \qquad .} \\ & M_\bullet^Y \ar[r]^{\beta^Y} & Y_\bullet }$$
Define $M_\bullet^X$ to be the chain complex $\Sigma^m H_m(T_\bullet) \in Ch^b(\xbar \CA \cap \L) \subseteq Ch^b(\T \cap \L)$. Since $I \subseteq J = \Ann(X_m)$ and $H_m(T_\bullet) \cong T_m/IT_m$, the map $\lambda_m \circ \alpha_m : T_m \to X_m$ factors through $T_m/IT_m$. Therefore, we get a chain complex map $\beta^X : M_\bullet^X \to X_\bullet$. Also $\mu \circ \alpha$ factors through $M_\bullet^X$ giving a chain complex map $\rho: M_\bullet^X \to M_\bullet^Y$ such that the following diagram commutes in $\DbLT$:
 $$\xymatrix{ M_\bullet^X \ar[r]^{\beta^X} \ar[d]^{\rho} & X_\bullet \ar[d]^{f  \qquad .} \\ M_\bullet^Y \ar[r]^{\beta^Y} & Y_\bullet. }$$To show that $H_m(\beta^X)$ is surjective, it is enough to show that $H_m(\nu)$ is surjective, which follows because $Q'_{\bullet}$ is just the degree-wise pullback. The statements $(1), (2)$ and $(3)$ can be derived from the long exact sequence of homologies arising from the mapping cone, similar to the proof of the previous lemma.
\end{proof}

We now state the technical heart of this section which is analogous to the key theorem \cite[Theorem 4.5]{SandersSane} of \cite{SandersSane}.
Let $\iota: D^b(\T \cap \L) \rightsquigarrow \DbLT$ be the natural functor induced by the inclusion map $Ch^b(\T \cap \L) \hookrightarrow \ChbLT$. 
\begin{thm}\label{general derived equivalence}
    Suppose $\L$ has the SR property with respect to $\CA$. Then the functor $\iota: D^b(\T \cap \L) \rightsquigarrow \DbLT$ is an equivalence of categories. 
\end{thm}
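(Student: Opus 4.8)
The plan is to prove that $\iota$ is an equivalence by establishing that it is essentially surjective, full, and faithful, reducing each statement to an induction on $\width$ (and, at fixed width, on $\min$) of the relevant complex, with the inductive step supplied by the strong-reducer machinery of Lemma~\ref{width lemma 1} and Lemma~\ref{morphisms in derived category}. This mirrors the structure of \cite[Theorem 4.5]{SandersSane}, so the work is to check that the abstract strong-reducer hypotheses suffice where the Koszul-specific arguments were used there.

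\textbf{Essential surjectivity.} Given $X_\bullet \in \ChbLT$, I would argue by induction on $\width(X_\bullet)$ that $X_\bullet$ lies in the essential image of $\iota$. If $\width(X_\bullet) = 0$ the complex has (at most) one nonzero homology module $H_m(X_\bullet)$, which lies in $\T \cap \L$ since $\T$ is thick and $\L$ is Serre; then $X_\bullet \cong \Sigma^m H_m(X_\bullet)$ in $\DbLT$ and we are done. If $\width(X_\bullet) = k > 0$, apply the SR property to $(X_\bullet, \Ann(X_m))$ with $m = \min(X_\bullet)$, obtaining a strong reducer $(T_\bullet, \alpha, I)$ and the exact triangle $\Sigma^{-1}C_\bullet \to T_\bullet \xrightarrow{\alpha} X_\bullet \to C_\bullet$. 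By Lemma~\ref{width lemma 1}, $\width(C_\bullet) < k$ and $\width(\Sigma^{-1}C_\bullet \oplus T_\bullet) < k$; since $T_\bullet \in Ch^b_{\tiny\L}(\CA) \subseteq Ch^b_{\tiny\L}(\T)$, both $T_\bullet$ and $C_\bullet$ are already in the essential image by induction, and then so is $X_\bullet$, being the cone of a map between objects in the essential image (using that $\iota$ lands in a triangulated subcategory, or more carefully that a map between complexes in the image can be realized, by fullness which we prove in parallel — so in practice essential surjectivity, fullness and faithfulness should be proved by a simultaneous induction on width).

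\textbf{Fullness and faithfulness.} For a morphism $f: \iota(X_\bullet) \to \iota(Y_\bullet)$ in $\DbLT$ with $X_\bullet, Y_\bullet \in Ch^b(\T\cap\L)$, I would run the construction of Lemma~\ref{morphisms in derived category}: produce $M_\bullet^X, M_\bullet^Y$ concentrated in degree $m = \min(X_\bullet \oplus Y_\bullet)$, surjections $\beta^X, \beta^Y$, a map $\rho$ making the square commute in $\DbLT$, and cones $C_\bullet^X, C_\bullet^Y$ with strictly smaller width (statement (1)), and with the mixed cone $C_\bullet^X \oplus Y_\bullet$ having width $\le k$ and, when $\min(X_\bullet) < \min(Y_\bullet)$, smaller $\min$ (statements (2),(3)). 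The octahedral axiom applied to $M_\bullet^X \to X_\bullet$ and the map to $Y_\bullet$ lets one replace the problem of lifting/comparing $f$ by the analogous problem for the cones, whose width (or $\min$ within fixed width) has dropped; since $M_\bullet^X, M_\bullet^Y$ are single modules in degree $m$, the morphism $\rho$ between them comes from an honest chain map, i.e.\ lies in the image of $\iota$. Threading this through the long exact sequences, fullness follows because any $f$ differs from $\iota(\text{something})$ by a morphism supported on cones of strictly smaller complexity (induction base: width $0$, where everything is a shifted module and $\Hom$ in $\DbLT$ agrees with $\Hom$ in $D^b(\T\cap\L)$ by a direct $\Ext$-comparison using that $\T\cap\L$ is closed under extensions). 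Faithfulness is handled the same way: if $\iota(g) = 0$ in $\DbLT$ for a chain map $g: X_\bullet \to Y_\bullet$, pull $g$ back along $\beta^X$, observe $\rho = \iota^{-1}(g \circ \beta^X$-data$)$ must already vanish at the degree-$m$ level, and push the vanishing up through the triangle; the cone complexity strictly decreases, so induction closes the argument.

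\textbf{Main obstacle.} The genuinely delicate point is the bookkeeping in the simultaneous induction: one must order the induction correctly (primary on $\width$, secondary on $\min$, using statement (3) of Lemma~\ref{morphisms in derived category} precisely to make the secondary induction terminate) and verify that every cone produced actually lands in $Ch^b_{\tiny\L}(\T)$ — this uses $R/J \in \L$ for $J = \Ann(X_m)$, that $H_m(T_\bullet) = T_m/IT_m$ with $I \subseteq J$ so that the factored maps $\beta^X$ exist as chain maps into $X_\bullet$, and that $\T$ is thick so cones of maps between $\T$-complexes with $\L$-homology remain in $Ch^b_{\tiny\L}(\T)$. The other subtlety is the base case $\width = 0$: there one needs that for $M, N \in \T \cap \L$ the natural map $\Ext^i_R(M,N) \to \Hom_{\DbLT}(\Sigma^m M, \Sigma^{m-i} N)$ is an isomorphism, which holds because $Ch^b(\T\cap\L) \hookrightarrow \ChbLT$ induces an isomorphism on $\Hom$ of shifted modules (any bounded $\L$-acyclic $\T$-complex is null-homotopic after the relevant truncation, or directly: a roof $X \xleftarrow{q} Q \to Y$ with $q$ a quasi-isomorphism can be replaced, using strong reducers once more, by a genuine chain map). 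Once this base case and the descent are in place, the three properties follow and $\iota$ is an equivalence.
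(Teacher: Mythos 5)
Your overall strategy is the same as the paper's: a simultaneous induction on $\wid$ (with a secondary descent on $\min$ supplied by part (3) of Lemma~\ref{morphisms in derived category}), with Lemma~\ref{width lemma 1} driving essential surjectivity and Lemma~\ref{morphisms in derived category} driving fullness, and with the base case handled by identifying both complexes with shifted modules. Two points, however, deserve correction.

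First, in the essential surjectivity step you apply the SR property to the pair $(X_\bullet, \Ann(X_m))$ for an arbitrary $X_\bullet \in \ChbLT$. This does not satisfy the hypothesis of Definition~\ref{SR prop}: the term $X_m$ lies in $\T$, not in $\L$, so $R/\Ann(X_m)$ need not belong to $\L$. (The choice $J=\Ann(X_m)$ is legitimate only in the fullness step, where the source complex lives in $Ch^b(\T\cap\L)$ and hence $R/\Ann(X_m)$ embeds in a finite direct sum of copies of $X_m\in\L$.) For essential surjectivity the correct, and sufficient, choice is $J=R$, since $R/R=0\in\L$ and condition (6) of the definition is then vacuous; this is what the paper does with the pair $(P_\bullet,R)$.

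Second, you set out to prove faithfulness directly, and that part of your argument is not a proof: the claim that $\rho$ ``must already vanish at the degree-$m$ level'' when $\iota(g)=0$ is unjustified, and pushing a vanishing through the triangles requires exactly the kind of Hom-set control you have not established. The paper avoids this entirely by invoking the standard fact (\cite[Lemma 2.19]{SandersSane}) that a triangulated functor which is full, essentially surjective, and \emph{faithful on objects} (i.e., $\iota(X_\bullet)=0$ forces $X_\bullet=0$, which here is immediate since acyclicity is detected on homology) is automatically an equivalence. You should route your argument the same way; your direct faithfulness sketch is both incomplete and unnecessary. With these two repairs, the remainder of your outline — including the admitted delicacy in showing that $f-\iota(g)$ lies in the image during the fullness induction — matches the paper's proof, which itself defers that final bookkeeping to \cite[Proposition 4.4]{SandersSane}.
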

\begin{proof}
    Since $\iota$ is a triangulated functor between triangulated categories, it suffices to prove that $\iota$ is faithful on objects, full and essentially surjective (for example by \cite[Lemma 2.19]{SandersSane}). If $\iota(X_\bullet) = 0$, i.e., it is acyclic, then $X_\bullet$ is also acyclic. Hence $\iota$ is faithful on objects and it suffices to prove that $\iota$ is essentially surjective and full.
    This proof is verbatim the same as that of \cite[Proposition 4.4]{SandersSane} and hence, we only briefly mention the main steps in the proof and request the interested reader to read the details from \cite{SandersSane}. The following induction statement on $k$ will show that $\iota$ is essentially surjective and full :
    \begin{enumerate}
        \item For every $P_\bullet \in \DbLT$ with $\width(P_\bullet) = k$, there exists $\tilde{P_\bullet} \in D^b(\T \cap \L)$ such that $\iota(\tilde{P_\bullet}) \cong P_\bullet$. 
        \item $\Hom_{\tiny D^b(\T \cap \L)}(X_\bullet,Y_\bullet) \to \Hom_{\tiny \DbLT}(X_\bullet,Y_\bullet)$ is surjective for all $X_\bullet,Y_\bullet \in D^b(\T \cap \L)$ such that $\width(X_\bullet \oplus Y_\bullet) = k$.
    \end{enumerate}
    The base case is when $k = 0$. If $\width(P_\bullet) = 0$, then $\supph(P_\bullet) \subseteq \{ m \}$ for some $m \in \BZ$. Thus $P_\bullet$ is quasi-isomorphic to $\Sigma^m H_m(P_\bullet)$. It follows from the definitions of $\T$ and $\L$ that $H_m(P_\bullet) \in \T \cap \L$ and hence $\Sigma^m H_m(P_\bullet) \in D^b(\T \cap \L)$, which proves (1). Similarly $\width(X_\bullet \oplus Y_\bullet) = 0$ implies that $X_\bullet$ and $Y_\bullet$ are quasi-isomorphic to $\Sigma^m H_m(X_\bullet)$ and $\Sigma^m H_m(Y_\bullet)$ for some $m \in \BZ$ respectively. Note that for any $R$-modules $M$ and $N$, regarding them as complexes, we have
    $$\Hom_{\tiny \T \cap \L}(M,N) \xrightarrow{\sim} \Hom_{\tiny D^b(\T \cap \L)}(M,N) \xrightarrow[]{\sim} \Hom_{\tiny \DbLT}(\iota(M),\iota(N)) \xrightarrow[]{\sim} \Hom_R(M,N).$$ Hence (2) follows when $k = 0$. \\
    \indent Assume the statement holds for all $k' <k$. Since $\L$ has the SR property with respect to $\CA$, $(P_\bullet,R)$ has a strong reducer $(Q_\bullet,\beta,I)$ with respect to $\CA$. Therefore, $Q_\bullet \in Ch^b_{\tiny \L}(\CA) \subseteq Ch^b_{\tiny \L}(\T)$ and $\supph(Q_\bullet) = \{ m\}$. Extending the chain complex morphism $\beta: Q_\bullet \to P_\bullet$, we get an  exact triangle $\Sigma^{-1} C_\bullet \xrightarrow[]{\alpha} Q_\bullet \xrightarrow[]{\beta} P_\bullet \xrightarrow{\gamma}C_\bullet$. Lemma \ref{width lemma 1} shows that $\width(C_\bullet) < k$ and $\width(\Sigma^{-1} C_\bullet \oplus Q_\bullet) < k$. Applying the induction hypothesis (1) and (2), there exists $\tilde{C_\bullet} \in D^b(\T \cap \L)$ such that $\iota(\tilde{C_\bullet}) = C_\bullet$ and the map $$\Hom_{\tiny D^b(\T \cap \L)}(\Sigma^{-1} \tilde{C_\bullet},\Sigma^{m}H_m(Q_\bullet)) \to \Hom_{\tiny \DbLT}(\Sigma^{-1} \tilde{C_\bullet},\Sigma^{m}H_m(Q_\bullet)) \cong \Hom_{\tiny \DbLT}(\Sigma^{-1} \tilde{C_\bullet}, Q_\bullet)$$ is surjective. Thus there exists $\tilde{\alpha}: \Sigma^{-1} \tilde{C_\bullet}\to \Sigma^{m}H_m(Q_\bullet)$  with cone $\tilde{P_\bullet}$ such that the following diagram commutes:
$$\xymatrix{ \Sigma^{-1} C_\bullet \ar[r]^{\alpha} \ar[d]^*[@]{\hspace{-1.5mm}\sim} & Q_\bullet \ar[d]^*[@]{\hspace{-1.5mm}\sim} \ar[r]^{\beta} & P_\bullet \ar[r]^{\gamma} & C_\bullet \ar[d]^*[@]{\hspace{-1.5mm}\sim}\\ 
\iota(\Sigma^{-1} \tilde{C_\bullet}) \ar[r]^-{\iota(\tilde{\alpha})} & \iota(\Sigma^{m}H_m(Q_\bullet)) \ar[r]^-{\iota({\tilde{\beta}})} & \iota({\tilde{P_\bullet}}) \ar[r]^{\iota({\tilde{\gamma}})} & \iota({\tilde{C_\bullet}})}$$
By the axiom TR3 of triangulated categories, $\iota({\tilde{P_\bullet}}) \cong P_\bullet$. Hence (1) is proved. \\
\indent We now give an idea of the proof of (2). Let $X_\bullet, Y_\bullet \in D^b(\T \cap \L)$ and $f \in \Hom_{\tiny \DbLT}(X_\bullet,Y_\bullet)$ and $\min(X_\bullet \oplus Y_\bullet) = m$. We may assume that $\min_c(X_\bullet ) \geq m$ and $\min_c(Y_\bullet ) \geq m$. Applying Lemma \ref{morphisms in derived category}, we get a morphism of triangles in $\DbLT$: 
$$\xymatrix{ \Sigma^{-1}C_\bullet^X \ar[r]^-{\alpha^X} \ar[d]^{\Sigma^{-1} \lambda} & M_\bullet^X \ar[r]^{\beta^X} \ar[d]^{\rho} & X_\bullet \ar[r]^{\gamma^X} \ar[d]^{f} & C_\bullet^X \ar[d]^{\lambda}\\ \Sigma^{-1}C_\bullet^Y \ar[r]^-{\alpha^Y} & M_\bullet^Y \ar[r]^{\beta^Y} & Y_\bullet \ar[r]^{\gamma^Y} &C_\bullet^Y }$$
where $M^X_{\bullet} , M^Y_{\bullet}$ are concentrated in degree $m$, the morphisms $\beta^X, \beta^Y, \rho $ are chain complex maps, $H_m(\beta^X), H_m(\beta^Y)$ are surjective and $\width(C_\bullet^X \oplus C_\bullet^Y) < k$. Applying the induction hypothesis (2) to $C_\bullet^X$ and  $C_\bullet^Y$, we get $\tilde{\lambda} \in \Hom_{\tiny D^b(\T \cap \L)}(C_\bullet^X, C_\bullet^Y)$ such that $\iota(\tilde{\lambda}) = \lambda$. We thus get a commutative diagram in $D^b(\T \cap \L)$:
$$\xymatrix{ \Sigma^{-1}C_\bullet^X \ar[r]^-{\alpha^X} \ar[d]^{\Sigma^{-1} \tilde{\lambda}} & M_\bullet^X \ar[r]^{\beta^X} \ar[d]^{\rho} & X_\bullet \ar@{-->}[d]^{g} \ar[r]^{\gamma^X}  & C_\bullet^X \ar[d]^{\tilde{\lambda}}\\ \Sigma^{-1}C_\bullet^Y \ar[r]^-{\alpha^Y} & M_\bullet^Y \ar[r]^{\beta^Y} & Y_\bullet \ar[r]^{\gamma^Y} &C_\bullet^Y }$$
where $g$ exists by axiom TR3 of triangulatedcategories. Unfortunately, it does not follow that $\iota(g) = f$. By linearity, we get a morphism of triangles in $\DbLT$ as above with the vertical arrow $f - \iota(g)$ from $X_\bullet$ to $Y_\bullet$ and other vertical arrows being zero. Using the weak kernel/cokernel properties, we get maps from $C_\bullet^X$ to $Y_\bullet$ and $X_\bullet$ to $M^Y_\bullet$. A very careful analysis of the Hom sets between these complexes and the induction hypothesis finally yields that $f = \iota(h)$ for some $h$, thus completing the proof.
\end{proof}

The following theorem is an immediate consequence of Theorem \ref{general derived equivalence}.
\begin{thm}\label{derivedequivalence}
     Suppose $\L$ has the SR property with respect to $\CA$. Then there is an equivalence of categories $ D^b(\xbar{\CA} \cap \L) \rightsquigarrow {D^b_{\tiny \L}}(\CA)$. In particular, $ D^b( \xbar{\CP} \cap \L ) \simeq \DbLP $.
\end{thm}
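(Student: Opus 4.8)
The plan is to deduce this from Theorem~\ref{general derived equivalence} by bridging between $D^b_{\tiny\L}(\CA)$ and $D^b_{\tiny\L}(\xbar{\CA})$. First I would observe that $\xbar{\CA}$ is a thick subcategory of $\modr$ containing $\CA$ --- it coincides with the closure $\overline{\CA}$ of the resolving subcategory $\CA$ recorded in Section~\ref{sec-prelims}. Hence Theorem~\ref{general derived equivalence} applies with $\T=\xbar{\CA}$, and it gives that the natural functor $\iota\colon D^b(\xbar{\CA}\cap\L)\rightsquigarrow D^b_{\tiny\L}(\xbar{\CA})$ is an equivalence of categories.

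Next I would prove that the functor $j\colon D^b_{\tiny\L}(\CA)\rightsquigarrow D^b_{\tiny\L}(\xbar{\CA})$ induced by the inclusion $Ch^b_{\tiny\L}(\CA)\hookrightarrow Ch^b_{\tiny\L}(\xbar{\CA})$ is an equivalence. By definition, every object of $\xbar{\CA}$ has a finite resolution by objects of $\CA$, obtained by truncating a projective resolution at a syzygy lying in $\CA$; since $\CA$ is closed under syzygies and extensions, a Cartan--Eilenberg-type resolution of any bounded complex of $\xbar{\CA}$-modules can be truncated to a \emph{bounded} complex of $\CA$-modules mapping quasi-isomorphically onto it, and, the homology being unchanged, this complex lies in $Ch^b_{\tiny\L}(\CA)$. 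This yields essential surjectivity of $j$. Fullness and faithfulness are then formal from the calculus of fractions: given a roof $X_\bullet\xleftarrow{\;s\;}Z_\bullet\xrightarrow{\;f\;}Y_\bullet$ over $D^b_{\tiny\L}(\xbar{\CA})$ with $X_\bullet,Y_\bullet\in Ch^b_{\tiny\L}(\CA)$, precomposing $s$ and $f$ with a quasi-isomorphism $\widetilde{Z}_\bullet\to Z_\bullet$ out of some $\widetilde{Z}_\bullet\in Ch^b_{\tiny\L}(\CA)$ produces an equivalent roof living over $D^b_{\tiny\L}(\CA)$, and the same device turns an equality of roofs in $D^b_{\tiny\L}(\xbar{\CA})$ into an equality in $D^b_{\tiny\L}(\CA)$ --- only composition of chain maps is needed here, no lifting along a resolution.

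Finally I would assemble the pieces. The functor $D^b(\xbar{\CA}\cap\L)\rightsquigarrow D^b_{\tiny\L}(\CA)$ of the statement sends a bounded complex of $(\xbar{\CA}\cap\L)$-modules to a bounded $\CA$-resolution of it; since such a resolution is quasi-isomorphic to the complex it resolves, composing this functor with $j$ yields a functor naturally isomorphic to $\iota$. As $\iota$ and $j$ are equivalences, the two-out-of-three property forces the functor of the statement to be an equivalence. The last assertion is the case $\CA=\CP$: there $\xbar{\CP}$ is the subcategory of modules of finite projective dimension and $D^b_{\tiny\L}(\CP)=\DbLP$; here one invokes the SR property of $\L$ with respect to $\CP$, which holds in all the situations of interest because the strong reducers constructed in this paper are complexes of free modules (and which in turn implies the SR property with respect to every resolving subcategory).

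I expect the only real content to lie in the essential-surjectivity step above --- boundedness of an $\CA$-resolution of a bounded complex, which is precisely where finiteness of $\CA$-dimension (as opposed to $\CA$ being merely resolving) enters, together with a modest amount of care that the relevant constructions are functorial enough for the calculus of fractions to apply. The substantive input, namely Theorem~\ref{general derived equivalence} and the strong-reducer machinery behind it, has already been supplied, so what remains is essentially formal.
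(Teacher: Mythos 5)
Your proposal is correct and follows the same route as the paper: apply Theorem~\ref{general derived equivalence} with $\T=\xbar{\CA}$, then show the natural functor $D^b_{\tiny\L}(\CA)\rightsquigarrow D^b_{\tiny\L}(\xbar{\CA})$ is an equivalence (the paper calls this ``a standard total complex argument,'' which is exactly your Cartan--Eilenberg truncation step), and compose. Your elaboration of the resolution and calculus-of-fractions details is sound; nothing essential differs from the paper's proof.
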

\begin{proof}
    Applying Theorem \ref{general derived equivalence} with $\T = \xbar{\CA}$, we get that
    $D^b(\xbar{\CA} \cap \L) \simeq {D^b_{\tiny \L}}(\xbar{\CA})$.
    For a resolving subcategory $\CA$, the natural functor ${D^b_{\tiny \L}}(\CA) \rightsquigarrow {D^b_{\tiny \L}}(\xbar{\CA})$ is an equivalence using a standard total complex argument. Hence $D^b(\xbar{\CA} \cap \L) \simeq {D^b_{\tiny \L}}(\CA)$.
\end{proof}
The following lemma gives a sufficient condition for a Serre subcategory to have the SR property in terms of ideals having filtration of finite $\CA$-dimension ideals.
\begin{lemma}\label{koszul trick}
    Let $\L$ be a Serre subcategory with the property that whenever $R/I' \in \L$, there exists $I \subseteq I'$ such that $R/I \in \L$ and $I$ has a filtration $\{ I_n \}$ which is equivalent to the $I$-adic filtration and $\textrm{dim}_{\tiny \CA}(R/I_n) < \infty$. Then $\L$ has the SR property with respect to $\CA$.
\end{lemma}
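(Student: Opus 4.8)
The plan is to verify the SR property directly: given a non-exact $X_\bullet\in Ch^b_{\L}(\modr)$ with $m:=\min(X_\bullet)$ and an ideal $J$ with $R/J\in\L$, I would produce a strong reducer of $(X_\bullet,J)$ with respect to $\CA$. The first move is to choose the ideal well. Put $\mathfrak a:=\Ann\!\bigl(\bigoplus_i H_i(X_\bullet)\bigr)$; since $X_\bullet$ is bounded and each $H_i(X_\bullet)\in\L$, we have $\Supp\bigl(R/(J\cap\mathfrak a)\bigr)=V(J)\cup\bigcup_i\Supp H_i(X_\bullet)\subseteq V_{\L}$, so $R/(J\cap\mathfrak a)\in\L$ by Remark~\ref{serresub}. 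Apply the hypothesis to $I':=J\cap\mathfrak a$ to get $I\subseteq J\cap\mathfrak a$ with $R/I\in\L$, a filtration $\{I_n\}$ of $I$ equivalent to $\{I^k\}$, and $\dim_{\CA}(R/I_n)<\infty$ for all $n$. We will use that $I\subseteq J$, that $I$ (hence every $I_n$) annihilates all $H_i(X_\bullet)$, and that $\sqrt{I_n}=\sqrt I$ for all $n$ (both inclusions being immediate from the equivalence of filtrations), so that $\Supp(R/I_n)=V(I)\subseteq V_{\L}$ and thus $R/I_n\in\L$ for every $n$. Finally, replacing $X_\bullet$ by its canonical truncation $\tau^{\mathrm{can}}_{\geq m}X_\bullet$ (a quasi-isomorphic subcomplex) and re-composing with the truncation inclusion at the very end, we may assume $X_i=0$ for $i<m$.

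Next I would manufacture, using the machinery of Section~\ref{section main thm}, an honest chain map into $X_\bullet$ out of a projective resolution that is homologically concentrated in the bottom degree. Fix a generating set $\tildes=s_1,\dots,s_d$ of $I$ and a projective resolution $\epsilon\colon P^X_\bullet\to X_\bullet$ by finitely generated projectives, bounded below with $P^X_i=0$ for $i<m$, $\epsilon_m$ surjective, and $P^X_m$ free, say $P^X_m=R^N$. The homology of $P^X_\bullet$ is that of $X_\bullet$, supported on $V((\tildes))$; hence Lemma~\ref{Foxby-Halvorsen} gives $r\in\BN$ (which we may take as large as convenient) and a chain map $\Psi\colon K(\tildes^{r};R^{N})\to P^X_\bullet$, placed in degrees $\geq m$, that is an isomorphism in degree $m$. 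Theorem~\ref{chainmapexists} with $M=R$ gives $\phi\colon T(\tildes^{u})\to K(\tildes^{r})$ with $\phi_0=\mathrm{id}_R$, where $u=u(r)$; enlarging $r$ if necessary, we arrange in addition $(\tildes^{u})\subseteq I_n$ for a suitable $n$, possible because $\{(\tildes^{k})\}$ and $\{I_n\}$ are equivalent filtrations. The composite, placed in degrees $\geq m$,
\[
\alpha^0\colon\ T(\tildes^{u})^{\oplus N}\ \xrightarrow{\ \phi^{\oplus N}\ }\ K(\tildes^{r};R^{N})\ \xrightarrow{\ \Psi\ }\ P^X_\bullet\ \xrightarrow{\ \epsilon\ }\ X_\bullet
\]
is an honest chain map whose source is a projective resolution of $(R/(\tildes^{u}))^{N}$ (so $\supph$ of the source is $\{m\}$ and its degree-$m$ homology is $(R/(\tildes^{u}))^{N}$), and whose degree-$m$ component is a surjection $R^{N}\twoheadrightarrow X_m$ because $\phi_0=\mathrm{id}$, $\Psi$ is an isomorphism in degree $m$, and $\epsilon_m$ is onto. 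Hence $H_m(\alpha^0)\colon (R/(\tildes^{u}))^{N}\twoheadrightarrow H_m(X_\bullet)$ is surjective; and since $I_n$ kills $H_m(X_\bullet)$ while $I_n\cdot(R/(\tildes^{u}))^{N}=\ker\bigl((R/(\tildes^{u}))^{N}\twoheadrightarrow(R/I_n)^{N}\bigr)$, the map $H_m(\alpha^0)$ factors as $(R/(\tildes^{u}))^{N}\twoheadrightarrow(R/I_n)^{N}\twoheadrightarrow H_m(X_\bullet)$.

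Third I would trade the (possibly infinite) projective resolution for a finite complex of $\CA$-modules; this is the step that uses $\dim_{\CA}(R/I_n)<\infty$. Choose a finite $\CA$-resolution $G_\bullet$ of $(R/I_n)^{N}$ with $G_0=R^{N}$. Since $(\tildes^{u})\subseteq I_n$, the surjection $(R/(\tildes^{u}))^{N}\twoheadrightarrow(R/I_n)^{N}$ lifts to an honest chain map $g\colon T(\tildes^{u})^{\oplus N}\to G_\bullet$ (the source is projective). Because the homology of $\alpha^0$ already factors through $H_m(g)$, a standard chain-level argument — pass to the mapping cylinder of $g$, whose canonical quasi-isomorphism onto $G_\bullet$ has contractible, degree-wise split kernel, extend $\alpha^0$ across the cylinder, and correct by the contracting homotopy of that kernel — produces an honest chain map $\alpha\colon G_\bullet\to X_\bullet$ (in degrees $\geq m$) with $H_m(\alpha)$ still surjective. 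Now set $T_\bullet:=G_\bullet$ re-indexed so that $G_0$ sits in homological degree $m$, and let the distinguished ideal be $I_n$. Then: $T_\bullet\in Ch^b_{\L}(\CA)$ (its only nonzero homology $(R/I_n)^{N}$ lies in $\L$); $\min_c(T_\bullet)=m=\min(X_\bullet)$; $\supph(T_\bullet)=\{m\}$ and $H_m(T_\bullet)=G_0/I_nG_0=T_m/I_nT_m$; $H_m(\alpha)$ is surjective; and $I_n\subseteq I\subseteq J$. Composing $\alpha$ with the truncation inclusion from the first paragraph yields a strong reducer of the original $(X_\bullet,J)$, and since $X_\bullet$ and $J$ were arbitrary, $\L$ has the SR property with respect to $\CA$.

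The step I expect to be the main obstacle is the transition in the third paragraph: Section~\ref{section main thm} naturally attaches the Tate resolution to the \emph{square-power} filtration $\{(\tildes^{k})\}$, whereas finite $\CA$-resolutions are only guaranteed along the \emph{given} filtration $\{I_n\}$ — and one genuinely must change filtrations, since $\dim_{\CA}(R/(\tildes^{u}))$ can be infinite even when every $\dim_{\CA}(R/I_n)$ is finite. What makes the bridge work is precisely that $I_n$ already annihilates $H_m(X_\bullet)$ (arranged in the first paragraph), forcing $H_m(\alpha^0)$ to factor through $(R/I_n)^{N}$ and so allowing $\alpha^0$ to be pushed down to $G_\bullet$; making this last rectification honest at the chain level (rather than merely in the derived category) is the delicate point, and is exactly the kind of bookkeeping carried out in \cite{SandersSane} for the Koszul resolution.
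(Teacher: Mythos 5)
Your first two paragraphs are essentially correct (and your choice $\mathfrak a=\Ann\bigl(\bigoplus_i H_i(X_\bullet)\bigr)$ in place of the paper's $\Ann(\End_{K(R)}(F_\bullet))$ is legitimate, since Lemma~\ref{Foxby-Halvorsen} as stated only requires the homologies of $P^X_\bullet$ to be supported on $V(I)$). The genuine gap is the descent in your third paragraph, exactly where you flagged the difficulty. Unwinding the definition of the mapping cylinder, extending $\alpha^0$ from $P_\bullet:=T(\tildes^{u})^{\oplus N}$ across $\mathrm{Cyl}(g)$ is \emph{equivalent} to producing a chain map $\beta\colon G_\bullet\to X_\bullet$ together with a homotopy $\beta\circ g\simeq\alpha^0$; so the ``standard chain-level argument'' presupposes the factorization it is supposed to deliver. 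And that factorization does not follow from the fact that $H_m(\alpha^0)$ factors through $H_m(g)$: writing $A=(R/(\tildes^{u}))^N$ and $B=(R/I_n)^N$, one has $\Hom_{K(R)}(P_\bullet,X_\bullet)\cong\Hom_{D(R)}(\Sigma^m A,X_\bullet)$, and because $X_\bullet$ has homology in degrees $>m$ this group is filtered with graded pieces involving $\Ext^i_R(A,H_{m+i}(X_\bullet))$ for $i>0$, not only $\Hom_R(A,H_m(X_\bullet))$. Factoring the degree-$m$ component through $B$ says nothing about the higher components, and $\Ext^i_R(B,-)\to\Ext^i_R(A,-)$ need not be surjective. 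Put differently: if one could always perform this descent, one could skip the Foxby--Halvorsen/Tate machinery altogether and simply lift $B\to H_m(X_\bullet)$ along $G_\bullet$ --- but such lifts fail precisely because $X_\bullet$ is not acyclic above degree $m$, which is the whole reason the machinery exists.

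The repair is to reverse the direction of the filtration comparison, which is what the paper does. Instead of arranging $(\tildes^{u})\subseteq I_n$ and trying to push $\alpha^0$ forward along $g$, use the other half of the equivalence of filtrations to choose $s$ with $I_s\subseteq I^{ud}\subseteq(\tildes^{u})$. Then the surjection goes $R/I_s\twoheadrightarrow R/(\tildes^{u})$ and lifts, by the ordinary comparison theorem (valid here because $T(\tildes^{u})$ \emph{is} acyclic in positive degrees), to a chain map $T(R/I_s)\to T(\tildes^{u})$ of projective resolutions. Truncating $T(R/I_s)$ at degree $\dim_{\CA}(R/I_s)$ yields a finite complex $T'_\bullet$ of $\CA$-modules quasi-isomorphic to $R/I_s$, and the strong reducer is the composite of honest, forward-pointing chain maps
$T'_\bullet\otimes P^X_m\to T(\tildes^{u})\otimes P^X_m\to K(\tildes^{r};P^X_m)\to P^X_\bullet\to X_\bullet$,
with distinguished ideal $I_s\subseteq I\subseteq J$; every arrow exists on the nose, $H_m$ of the composite is a composition of surjections, and no descent is required.
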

\begin{proof}
    Let $X_{\bullet} \in Ch^b_{\tiny \L} (\modr)$ and $J \subseteq R$ an ideal such that $X_\bullet$ is not exact and $R/J \in \L$. Without loss of generality, assume $\min(X_\bullet) = 0$. Then there exists a complex $F_{\bullet}$ of finitely generated free modules and a quasi-isomorphism $\pi: F_{\bullet} \to X_{\bullet}$ where $F_i = 0$ for all $i<0$ and $\pi_0$ is surjective. It follows from \cite[Corollary 10.4.7]{weibelhomological} that $$\End_{K(R)}(F_\bullet) \cong \End_{D(R)}(F_\bullet) \cong \Hom_{D(R)}(F_\bullet,X_\bullet) \cong \Hom_{K(R)}(F_\bullet,X_\bullet).$$ Since $X_\bullet$ is bounded, $\Hom_{K(R)}(F_\bullet,X_\bullet)$ is finitely generated and hence, so is $\End_{K(R)}(F_\bullet)$. Also, the map  $(\End_{K(R)}(F_\bullet))_{\cp} \to \End_{K(R_{\tiny \cp})}((F_\bullet)_p)$ is injective for all $\cp \in Spec(R)$, which follows from the above isomorphism and boundedness assumption on $X_\bullet$. 
    \par We claim that $\Supp(\End_{K(R)}(F_\bullet)) \subseteq \bigcup_{i = 0}^{\infty} \Supp(H_i(F_\bullet))$. Let $\cp \in Spec(R)$ such that $(\End_{K(R)}(F_\bullet))_{\cp} \neq 0$. Then $\End_{K(R_{\tiny \cp})}((F_\bullet)_{\cp}) \neq 0$ and hence $(F_\bullet)_{\cp}$ is not homotopic to $O_\bullet$. Therefore $(F_\bullet)_{\cp}$ is not acyclic, since $F_\bullet$ is bounded below. So $\cp \in \Supp(H_i(F_\bullet))$ for some $i$ and hence the claim. Therefore we get 
    $$\Supp(\End_{K(R)}(F_\bullet)) \subseteq \bigcup_{i = 0}^{\infty} \Supp(H_i(F_\bullet)) = \bigcup_{i = 0}^{\infty} \Supp(H_i(X_\bullet)) \subseteq V_{\tiny \L}$$ where $V_{\tiny \L}$ is the specialization closed set corresponding to $\L$ as defined in \ref{serresub defn}(\ref{VL defn}).
    \par Define $N = \Ann(\End_{K(R)}(F_{\bullet})) \subseteq R$. Then $V(N) = \Supp(\End_{K(R)}(F_\bullet)) \subseteq V_{\tiny \L}$. Since we assumed $R/J \in \L$, $V(J) = \Supp(R/J) \subseteq V_{\tiny \L}$. Hence $$V(N \cap J) = V(N) \cup V(J) \subseteq V_{\tiny \L}.$$ Therefore, $R/(N \cap J) \in \L$. By the assumption on $\L$, there exists $I \subseteq (N \cap J)$ such that  $R/I \in \L$ and $I$ has a filtration $\{ I_n \}$ which is equivalent to the $I$-adic filtration and $\textrm{dim}_{\tiny \CA}(R/I_n) < \infty$. Let $f_1,\dots,f_t$ generate $I$. Since $I \subseteq N' = \Ann(\End_{K(R)}(F_{\bullet}))$, $f_i{id}_{\tiny F_{\bullet}}$ is null-homotopic for each $i$. By Lemma \ref{Foxby-Halvorsen}, there exists a
    chain complex map $$\Psi: K_{\bullet} = K(f_1,\dots,f_t) \otimes_R F_0 \to F_{\bullet}$$ such that $\min_c(K_{\bullet}) = 0$ and $\Psi_0: K_0 \to F_0$ is an isomorphism.
    Hence $H_0(\Psi)$ is surjective. 
    \par From Theorem \ref{chainmapexists}, there exist $u \in\BN$ and a chain complex map $\phi: T (f_1^u,\dots,f_t^u)  \to K(f_1,\dots,f_t)$ such that $\phi_0$ is an isomorphism and hence $H_0(\phi)$ is surjective.
    Since the filtration $\{ I_n\}_{n \in \tiny \BN}$ of $I$ equivalent to the filtration $\{I^l\}_{l \in \tiny \BN}$ and hence equivalent to the square-power filtration $\{I^{[l]}\}_{l \in \tiny \BN}$, there exists $s \in \BN$ such that $I_s \subseteq I^{[u]}$. This induces a chain complex map from the Tate resolution of $R/I_s$ to the Tate resolution of $R/I^{[u]}$. Since $\dim_{\tiny \CA}(R/I_s) < \infty$, the  canonical truncation $T'_{\bullet}$ of its Tate resolution at that degree is homotopy equivalent to it via the natural inclusion and consists of modules in $\CA$. Define $T_{\bullet} = T'_{\bullet} \otimes F_0$ and $\alpha$ to be the chain complex map obtained by composition as follows: $$T_{\bullet} \hookrightarrow T (R/I_s) \otimes F_0 \rightarrow  T (f_1^u,\dots,f_t^u) \otimes_R F_0 \xrightarrow{\phi \otimes id_{F_0}} K(f_1,\dots,f_t) \otimes F_0 \xrightarrow{\Psi}
    F_{\bullet} \xrightarrow{\pi} X_{\bullet} \quad .$$ It is now straightforward to see that $T_0 = F_0$ and hence $\alpha_0$ is a surjection. Hence $H_0(\alpha)$ is surjective. Note that $\min_c(T_{\bullet}) = 0$ and  $T_{\bullet}$ is a finite $\CA$-resolution of  $R/I_{s} \otimes_R F_0$. Hence, $T_{\bullet} \in Ch^b_{\tiny \L}(\CA)$, and $\supph(T_{\bullet}) = \{ 0 \}$. Further, it follows that $H_0(T_\bullet) \cong  T_0 /I_sT_0 $. Finally, $I_s \subseteq I^{[u]} \subseteq I \subseteq J$.
    Therefore, $(T_\bullet, \alpha,I_s)$ is a strong reducer of $(X_\bullet,J)$ with respect to $\CA$.
\end{proof}
Finally, as a straightforward consequence of Theorem~\ref{derivedequivalence} and Lemma \ref{koszul trick}, we obtain the main theorem of this section, which appears as Theorem~\ref{intro-der-equiv}.
\begin{thm}\label{der-equiv-main}
Let $R$ be a commutative noetherian ring. Let $\CA \subseteq \modr$ be a resolving subcategory. Then there is an equivalence of categories $ D^b(\xbar{\CA} \cap \L) \rightsquigarrow {D^b_{\tiny \L}}(\CA)$ in the following cases :
    \begin{enumerate}
        \item $R$ is a regular ring.
        \item $\L$ satisfies condition $(*)$ (as defined in the introduction).
        \item $\L = \L_{V(I)}$ where $I \subseteq R$ is an ideal which has a filtration $\{ I_n \}$ which is equivalent to the $I$-adic filtration and $\textrm{dim}_{\tiny \CA}(R/I_n) < \infty$.
    \end{enumerate}
\end{thm}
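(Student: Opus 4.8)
The plan is to obtain Theorem~\ref{der-equiv-main} as a formal consequence of Theorem~\ref{derivedequivalence} and Lemma~\ref{koszul trick}. By Theorem~\ref{derivedequivalence} it is enough to show that in each of the three cases $\L$ has the SR property with respect to $\CA$, and by Lemma~\ref{koszul trick} this reduces to the following check: for every ideal $I' \subseteq R$ with $R/I' \in \L$, one must exhibit an ideal $\FB \subseteq I'$ with $R/\FB \in \L$ carrying a filtration $\{\FB_n\}$ equivalent to the $\FB$-adic filtration and with $\dim_{\CA}(R/\FB_n) < \infty$ for all $n$. Throughout I would use that every resolving subcategory contains $\CP$, so a module of finite projective dimension automatically has finite $\CA$-dimension.

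In case~(1), $R$ regular, take $\FB = I'$ together with its own $I'$-adic filtration $\FB_n = (I')^n$; this is trivially equivalent to the $\FB$-adic filtration, and regularity gives $\pd(R/(I')^n) < \infty$, hence $\dim_{\CA}(R/(I')^n) < \infty$. In case~(2), $\L$ satisfying condition~$(*)$, apply $(*)$ to $I'$ to get a regular sequence $a_1, \dots, a_c$ in $I'$ with $R/(a_1,\dots,a_c) \in \L$; put $\FB = (a_1,\dots,a_c)$ and take the square-power filtration $\FB_n = (a_1^n,\dots,a_c^n)$, which is equivalent to the $\FB$-adic filtration by Example~\ref{equivalent filtration eg}(i). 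Since powers of a regular sequence again form a regular sequence, $K(a_1^n,\dots,a_c^n)$ is a finite free resolution of $R/\FB_n$, so $\pd(R/\FB_n) \leq c$ and in particular $\dim_{\CA}(R/\FB_n) < \infty$.

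In case~(3), $\L = \L_{V(I)}$ with $\{I_n\}$ a filtration of $I$ equivalent to $\{I^k\}$ and $\dim_{\CA}(R/I_n) < \infty$, the input $R/I' \in \L_{V(I)}$ gives $V(I') \subseteq V(I)$, i.e.\ $I \subseteq \sqrt{I'}$, so $I^N \subseteq I'$ for some $N$ (as $R$ is noetherian). I would set $\FB = I^N$; then $\Supp(R/\FB) = V(I^N) = V(I)$, so $R/\FB \in \L$, and the $\FB$-adic filtration $\{I^{Nk}\}$ is equivalent to $\{I^k\}$. Choosing $n_0$ with $I_{n_0} \subseteq I^N = \FB$ (possible since $\{I_n\}$ is equivalent to $\{I^k\}$), the truncated chain $\FB_j := I_{n_0+j-1}$, $j \geq 1$, is a filtration of $\FB$, still equivalent to $\{I^k\}$ and hence to the $\FB$-adic filtration, with $\dim_{\CA}(R/\FB_j) < \infty$. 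Thus Lemma~\ref{koszul trick} applies in all three cases, Theorem~\ref{derivedequivalence} delivers the equivalence $D^b(\xbar{\CA} \cap \L) \rightsquigarrow D^b_{\tiny \L}(\CA)$, and taking $\CA = \CP$ gives $D^b(\xbar{\CP} \cap \L) \simeq \DbLP$. At this point no real obstacle remains: all the substance has been absorbed into Theorem~\ref{chainmapexists}, its use in Lemma~\ref{koszul trick} to manufacture a strong reducer, and the homological induction of Theorem~\ref{general derived equivalence}; the only step needing a little attention is the mild bookkeeping in case~(3), where one trims the given filtration of $I$ down to a filtration of the smaller ideal $I^N$ by discarding its too-large initial terms.
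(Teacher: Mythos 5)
Your proposal is correct and follows exactly the route the paper takes: reduce to the hypothesis of Lemma~\ref{koszul trick} (which the paper dismisses as ``easy to check'') and then invoke Theorem~\ref{derivedequivalence}. Your case-by-case verification — the adic filtration in the regular case, the square-power filtration of a regular sequence in case (2), and the passage from $I'$ to $I^N \subseteq I'$ with the trimmed filtration in case (3) — is a correct filling-in of that omitted check.
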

\begin{proof}
    It is easy to check that in the mentioned cases, $\L$ satisfies the hypothesis of the Lemma \ref{koszul trick} and thus has the SR property with respect to $\CA$. Applying Theorem~\ref{derivedequivalence} completes the proof.
\end{proof}
For the sake of completeness, we restate Corollary \ref{intro-cor} which is an immediate and particularly interesting consequence of Theorem \ref{der-equiv-main}.
\begin{cor}\label{derived equivalence for efpd}
\begin{enumerate}[(a)]
    \item Let $\L = \L_{V(I)}$ where $I$ has efpd. Then $ D^b( \xbar{\CA} \cap \L ) \simeq {D^b_{\tiny \L}}(\CA) $.
    \item If $R$ is of prime characteristic and $\pd(R/I) < \infty$, then $I$ has efpd and hence the above derived equivalence holds.
    \item When $R$ is a regular ring or $\L$ satisfies condition {\rm(*)} or $\L = \L_{V(I)}$ where $I$ has efpd, $ D^b( \xbar{\CP} \cap \L ) \simeq \DbLP $.
\end{enumerate}
\end{cor}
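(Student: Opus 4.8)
The plan is to read off all three statements from Theorem~\ref{der-equiv-main}, together with the catalogue of efpd ideals in Example~\ref{efpd example}; the only work is matching hypotheses, so I do not expect a genuine obstacle here.

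For (a), I would start from the definition: if $I$ has efpd, then by Definition~\ref{def-efpd} there is a filtration $\{J_n\}$ of $I$, equivalent to the $I$-adic filtration, with $\pd(R/J_n) < \infty$ for all $n \in \BN$. Since every resolving subcategory contains $\CP$, finite projective dimension forces finite $\CA$-dimension, so $\dim_{\tiny \CA}(R/J_n) \leq \pd(R/J_n) < \infty$. Thus, taking $I_n = J_n$, the hypothesis of case (3) of Theorem~\ref{der-equiv-main} is satisfied, yielding the equivalence $D^b(\xbar{\CA} \cap \L) \rightsquigarrow {D^b_{\tiny \L}}(\CA)$.

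For (b), I would invoke Example~\ref{efpd example}(2): when $\ch(R) = p > 0$ and $\pd(R/I) < \infty$, Lemma~\ref{char p proj dim} gives $\pd(R/I^{[p^n]}) = \pd(R/I) < \infty$ for every $n$, while the Frobenius power filtration $\{I^{[p^n]}\}$ is equivalent to $\{I^k\}$ by Example~\ref{equivalent filtration eg}(i). Hence $I$ has efpd and part (a) applies. For (c), I would specialise to $\CA = \CP$, so that $\xbar{\CP}$ is the category of finitely generated modules of finite projective dimension and ${D^b_{\tiny \L}}(\CP) = \DbLP$: if $R$ is regular or $\L$ satisfies condition $(*)$, the conclusion is immediate from cases (1) and (2) of Theorem~\ref{der-equiv-main} respectively, and if $\L = \L_{V(I)}$ with $I$ of efpd it follows from (a) with $\CA = \CP$.

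The only point worth flagging is the cosmetic gap between the hypothesis ``$\pd(R/I_n) < \infty$'' built into efpd and the hypothesis ``$\dim_{\tiny \CA}(R/I_n) < \infty$'' appearing in Theorem~\ref{der-equiv-main}(3); this is closed by the containment $\CP \subseteq \CA$ valid for any resolving $\CA$, which is exactly the reduction used in part (a). Apart from that, the corollary is pure bookkeeping, and I do not anticipate any substantive difficulty.
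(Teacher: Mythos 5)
Your proposal is correct and matches the paper's (essentially implicit) argument: the paper presents this corollary as an immediate consequence of Theorem~\ref{der-equiv-main}, and your reduction --- unpacking the efpd definition, noting that $\CP \subseteq \CA$ forces $\dim_{\tiny \CA}(R/J_n) \leq \pd(R/J_n) < \infty$, invoking Example~\ref{efpd example}(2) for part (b), and specialising to $\CA = \CP$ for part (c) --- is exactly the intended bookkeeping. (Minor note: your citation of Example~\ref{equivalent filtration eg}(i) for the equivalence of the Frobenius/square-power filtration with the $I$-adic one is the right reference.)
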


\section{Remarks on ideals of eventually finite projective dimension}\label{last section}
\subsection*{Some consequences of the derived equivalence} 
 We now study when an ideal $I$ in a noetherian ring has efpd. The next lemma exhibits a sequence of implications that give necessary conditions for an ideal to have eventually finite projective dimension.
\begin{lemma}\label{4 statements}
Let $R$ be a noetherian ring, $I \neq R$  an ideal in it and $\L =  \L_{V(I)}$. Consider the following statements.
\begin{enumerate}[(i)]
    \item $I$ is an ideal of eventually finite projective dimension.
    \item There is a derived equivalence $ D^b( \xbar{\CP} \cap \L )  \simeq \DbLP$.
    \item There exists a non-zero finitely generated module $M$ with finite projective dimension and $\Supp(M) = V(I)$.
    \item For every minimal prime $\cp$ of $I$, $R_{\cp}$ is Cohen-Macaulay. 
\end{enumerate}
Then $(i) \Rightarrow (ii) \Rightarrow (iii) \Rightarrow (iv)$.
\end{lemma}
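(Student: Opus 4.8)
The plan is to establish the chain $(i)\Rightarrow(ii)\Rightarrow(iii)\Rightarrow(iv)$, where the first implication is immediate from what has already been proved, the middle one carries the real content, and the last is a short localization argument invoking the intersection theorem.

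For $(i)\Rightarrow(ii)$ I would simply observe that efpd for $I$ is precisely the hypothesis of Theorem~\ref{der-equiv-main}(3) (equivalently Corollary~\ref{derived equivalence for efpd}(c)) with $\CA=\CP$: a filtration $\{J_n\}$ of $I$ equivalent to the $I$-adic filtration with $\pd(R/J_n)<\infty$ is the same as one with $\dim_{\tiny\CP}(R/J_n)<\infty$. Hence $D^b(\xbar\CP\cap\L)\simeq\DbLP$.

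For $(ii)\Rightarrow(iii)$, the idea is to feed the Koszul complex into the (essentially surjective) equivalence and read off a module from its preimage. Fix a generating set $\tildes=s_1,\dots,s_d$ of $I$. Then $K(\tildes)$ is a bounded complex of finitely generated free modules, and each $H_i(K(\tildes))$ is annihilated by $I$, hence lies in $\L=\L_{V(I)}$; thus $K(\tildes)$ represents an object of $\DbLP$. By essential surjectivity there is $Y_\bullet\in D^b(\xbar\CP\cap\L)$, represented by a bounded complex each of whose terms $Y_i$ is finitely generated, of finite projective dimension, and supported on $V(I)$, whose image (a projective resolution of $Y_\bullet$) is isomorphic to $K(\tildes)$ in $\DbLP$; hence $Y_\bullet\cong K(\tildes)$ in $D^b(R)$ and $\bigcup_i\Supp H_i(Y_\bullet)=\bigcup_i\Supp H_i(K(\tildes))=V(I)$, using $H_0(K(\tildes))=R/I$ for the last equality. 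Now put $M=\bigoplus_i Y_i$: it is finitely generated of finite projective dimension, $\Supp M\subseteq V(I)$ because each $Y_i\in\L$, and $\Supp M\supseteq\bigcup_i\Supp H_i(Y_\bullet)=V(I)$ since $H_i(Y_\bullet)$ is a subquotient of $Y_i$; so $\Supp M=V(I)$, and $M\neq0$ as $I\neq R$ forces $V(I)\neq\emptyset$. This is the step I expect to require the most care, mostly in stating precisely what essential surjectivity yields and in the support bookkeeping; the point of choosing $K(\tildes)$ rather than $R/I$ (which need not have finite projective dimension, so is not visibly an object of $\DbLP$) is exactly to make this go through.

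For $(iii)\Rightarrow(iv)$, let $M$ be as in $(iii)$ and let $\cp$ be a minimal prime of $I$. Then $\cp\in V(I)=\Supp M$, so $M_{\cp}\neq0$, and $\pd_{R_{\cp}}M_{\cp}\le\pd_R M<\infty$ since finite free resolutions localize. Minimality of $\cp$ in $\Supp M$ forces $\Supp_{R_{\cp}}M_{\cp}=\{\cp R_{\cp}\}$, so $M_{\cp}$ has finite length. Thus $R_{\cp}$ is a noetherian local ring carrying a non-zero finitely generated module of finite length and finite projective dimension, and Corollary~\ref{intersection cor} yields that $R_{\cp}$ is Cohen-Macaulay.
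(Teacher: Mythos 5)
Your proposal is correct, and two of the three implications match the paper exactly: $(i)\Rightarrow(ii)$ is the same citation of Corollary~\ref{derived equivalence for efpd}(c), and $(iii)\Rightarrow(iv)$ is the same localization argument via Corollary~\ref{intersection cor} (you spell out why $M_{\cp}$ has finite length, which the paper leaves implicit). The one place you diverge is $(ii)\Rightarrow(iii)$. The paper argues at the level of supports: it invokes the Hopkins--Neeman theorem to write $V(I)=\bigcup_{X\in\DbLP}\Supp(X)$, transports this through the equivalence to $\bigcup_{M\in\xbar{\CP}\cap\L}\Supp(M)$, picks for each minimal prime $\cp_i$ of $I$ a module $M_i\in\xbar{\CP}\cap\L$ with $(M_i)_{\cp_i}\neq 0$, and sets $M=\oplus M_i$. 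You instead apply essential surjectivity to the single object $K(\tildes)\in\DbLP$, obtain a preimage complex $Y_\bullet$ with terms in $\xbar{\CP}\cap\L$ and $H_i(Y_\bullet)\cong H_i(K(\tildes))$, and take $M=\oplus_i Y_i$; the support bookkeeping ($\Supp M\subseteq V(I)$ termwise, $\Supp M\supseteq\Supp H_0(Y_\bullet)=V(I)$) is sound. Your route is a little more self-contained — it uses only essential surjectivity plus the fact that $H_0(K(\tildes))=R/I$, and avoids any appeal to Hopkins--Neeman — at the cost of being slightly more hands-on with the explicit preimage complex; the paper's version is shorter once Hopkins--Neeman is taken as known. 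Both are valid proofs of the lemma.
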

\begin{proof}
    $(i) \Rightarrow (ii)$ : This is Corollary \ref{derived equivalence for efpd}(c). \\
    $(ii) \implies (iii)$ : By the Hopkins-Neeman theorem, $$V(I) = \bigcup_{\tiny{X \in \DbLP}} \Supp(X) = \bigcup_{\tiny{X \in D^b( \xbar{\CP} \cap \L )}} \Supp(X) = \bigcup_{\tiny{M \in \xbar{\CP} \cap \L}} \Supp(M).$$
Hence, for each $\cp_i \in Min(I)$, there exists $M_i \in \xbar{\CP} \cap \L$ such that ${(M_i)}_{\cp_{i}} \neq 0$ . Thus $M = \oplus M_i$ is finitely generated, has finite projective dimension and $\Supp(M) = V(I)$. \\
$(iii) \implies (iv):$ For any minimal prime $\cp$ of $I$, the $R_{\cp}$-module $M_{\cp}$ has finite projective dimension and $\Supp(M_{\cp}) = \{ \cp R_{\cp} \}$. Hence by Corollary \ref{intersection cor}, $R_{\cp}$ is Cohen-Macaulay. 
\end{proof}
Note that the implication $(i) \implies (iii)$ follows directly from the definition (for example, by choosing $M = R/I_n$).
\begin{eg}
    Example \ref{efpd non-example} in which $R=k[X,Y]/(XY)$ for a field $k$ and $I = (X)$ also shows that $(iv) \not\Rightarrow (iii)$.
\end{eg}
\begin{question}
    Are any of the other reverse implications true?
\end{question}
 We highlight below the special case where $(R,\m)$ is a local ring and $I$ is an $\m$-primary ideal wherein the statements above are equivalent.
This is a direct extension of \cite[Theorem 1.1]{SandersSane}, which was the inspiration for the previous lemma, and adds to the characterization of Cohen-Macaulay local rings in terms of the efpd property.
\begin{cor}\label{CM characterization}
    Let $(R,\m)$ be a noetherian local ring. Then the following are equivalent :
    \begin{enumerate}[(i)]
        \item $\m$ has efpd.
        \item $ D^b( \xbar{\CP} \cap \L )  \simeq \DbLP$ where $\L = \L_{V(\m)}$.
        \item There exists a non-zero finitely generated module $M$ with finite length finite projective dimension.
        \item $(R,\m)$ is Cohen-Macaulay.
    \end{enumerate}
\end{cor}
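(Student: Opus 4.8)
The plan is to let Lemma~\ref{4 statements} do almost all the work and to supply only the single new implication $(iv)\Rightarrow(i)$ closing the cycle.

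First I would record that Corollary~\ref{CM characterization} is precisely the specialization of Lemma~\ref{4 statements} to the case in which $R$ is local and $I=\m$. Indeed, $V(\m)=\{\m\}$, so for a nonzero finitely generated $R$-module $M$ the condition $\Supp(M)=V(\m)$ of Lemma~\ref{4 statements}(iii) is exactly the statement that $M$ has finite length; and the only minimal prime of $\m$ is $\m$ itself, with $R_\m=R$, so Lemma~\ref{4 statements}(iv) reads ``$R$ is Cohen-Macaulay''. With these two translations in place, Lemma~\ref{4 statements} immediately yields $(i)\Rightarrow(ii)\Rightarrow(iii)\Rightarrow(iv)$ in the present corollary, so it remains only to prove $(iv)\Rightarrow(i)$.

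For $(iv)\Rightarrow(i)$ I would argue as follows. Suppose $R$ is Cohen-Macaulay of dimension $d$ and choose a system of parameters $a_1,\dots,a_d\in\m$. Since $R$ is Cohen-Macaulay, $a_1,\dots,a_d$ is a regular sequence, and being a system of parameters it satisfies $\sqrt{(a_1,\dots,a_d)}=\m$; thus $\m$ is a set-theoretic complete intersection. By Example~\ref{efpd example}(\ref{CI eg}) the ideal $(a_1,\dots,a_d)$ has efpd (concretely, $a_1^k,\dots,a_d^k$ remains a regular sequence, so each Koszul complex $K(a_1^k,\dots,a_d^k)$ is a finite free resolution of $R/(a_1^k,\dots,a_d^k)$), and by Remark~\ref{propertiesefpd} the efpd property depends only on the radical of an ideal, so $\m$ has efpd. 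This is already the content of Example~\ref{efpd example}(4), and it completes the equivalence.

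I do not expect a genuine obstacle here: the substantive inputs — the Hopkins--Neeman classification and the Intersection Theorem via Corollary~\ref{intersection cor} — are absorbed into Lemma~\ref{4 statements}, and the efpd of a set-theoretic complete intersection is Example~\ref{efpd example}(\ref{CI eg}). The only point that needs a moment's attention is the bookkeeping described above, namely the passage between the ``$\Supp(M)=V(\m)$'' and ``finite length'' formulations, and between ``$R_\cp$ Cohen-Macaulay for every minimal prime $\cp$ of $\m$'' and ``$R$ Cohen-Macaulay'', both of which are immediate once one notes $I=\m$ and $R$ is local.
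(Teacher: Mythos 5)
Your proposal is correct and follows exactly the paper's argument: the chain $(i)\Rightarrow(ii)\Rightarrow(iii)\Rightarrow(iv)$ is the specialization of Lemma~\ref{4 statements} to $I=\m$, and $(iv)\Rightarrow(i)$ is obtained by observing that $\m$ is a set-theoretic complete intersection in a Cohen-Macaulay local ring, hence has efpd by Example~\ref{efpd example}. Your extra bookkeeping about translating ``$\Supp(M)=V(\m)$'' to ``finite length'' is a harmless elaboration of what the paper leaves implicit.
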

\begin{proof} By the previous lemma, we only need to prove $(iv) \Rightarrow (i)$ which follows by observing that when $R$ is Cohen-Macaulay, then $\m$ is a set-theoretic complete intersection ideal. \\
\end{proof}
Note that some of these implications can be directly proved to be equivalent.

\subsection*{Known results and counter-examples related to ideals having efpd}
The definition of ideals of eventually finite projective dimension raises some natural questions regarding the eventual behaviour of the projective dimension of a filtration. The following 
consequence of \cite[Theorem 2(i)]{Brodmann} is further suggestive that the limiting behaviour may be interesting.
\begin{thm}\label{Brodmann}
    Let $R$ be a noetherian ring and $I$ an ideal in $R$. Then $\pd(R/I^n)$ is constant (possibly infinite) for all large $n \in \BN$. 
\end{thm}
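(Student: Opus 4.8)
The plan is to reduce the statement to the local case and then feed in Brodmann's theorem together with the Auslander--Buchsbaum formula. Recall that for a finitely generated module $M$ over a Noetherian ring $R$ one has $\pd_{R}(M)=\sup_{\mathfrak m}\pd_{R_{\mathfrak m}}(M_{\mathfrak m})$, the supremum being over maximal ideals, and that over a local ring with residue field $k$ one has $\pd_{R_{\mathfrak m}}(M_{\mathfrak m})=\sup\{\,i\mid\Ext^{i}_{R_{\mathfrak m}}(M_{\mathfrak m},k)\neq 0\,\}$, a number bounded by $\operatorname{depth}R_{\mathfrak m}$ whenever it is finite. Applying this with $M=R/I^{n}$, whose localization at $\mathfrak m$ is $R_{\mathfrak m}/(IR_{\mathfrak m})^{n}$, it suffices to prove that $\mathfrak m\mapsto\pd_{R_{\mathfrak m}}\bigl(R_{\mathfrak m}/(IR_{\mathfrak m})^{n}\bigr)$ is eventually independent of $n$, \emph{uniformly} in $\mathfrak m$; the supremum over $\mathfrak m$ is then eventually constant as well.

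First I would fix a maximal ideal, pass to the local ring $(R_{\mathfrak m},\mathfrak m R_{\mathfrak m})$ and the ideal $IR_{\mathfrak m}$, and to ease notation rename these $(R,\mathfrak m)$ and $I$. By Brodmann's theorem \cite[Theorem 2(i)]{Brodmann} there is an $n_{0}$ such that $\operatorname{depth}(R/I^{n})$ is a fixed value for all $n\ge n_{0}$, and moreover the validity of ``$\pd_{R}(R/I^{n})<\infty$'' is constant for $n\ge n_{0}$. This last dichotomy is what forbids $\pd$ from oscillating between finite and infinite values; it is the numerical shadow of the finite generation, over the Rees algebra $\mathcal R(I)$, of the graded modules $\bigoplus_{n}\Tor^{R}_{i}(R/I^{n},k)$, because a fixed graded piece of a finitely generated graded module over a Noetherian graded ring has eventually constant (non)vanishing. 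Now if $\pd_{R}(R/I^{n})=\infty$ for all $n\ge n_{0}$ we are finished; otherwise $\pd_{R}(R/I^{n})<\infty$ for all $n\ge n_{0}$, and the Auslander--Buchsbaum formula gives $\pd_{R}(R/I^{n})=\operatorname{depth}R-\operatorname{depth}(R/I^{n})$, which is constant for $n\ge n_{0}$.

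For the global conclusion I would use that Brodmann's theorem furnishes a single $n_{0}$ that works for all maximal ideals simultaneously (this uniformity is exactly why the theorem is stated for an arbitrary Noetherian ring and not merely a local one): for $n\ge n_{0}$ the local projective dimensions $\pd_{R_{\mathfrak m}}(R_{\mathfrak m}/(IR_{\mathfrak m})^{n})$ are independent of $n$, hence so is their supremum $\pd_{R}(R/I^{n})$. The main obstacle is precisely this uniform stabilization across all localizations, together with the finite-versus-infinite dichotomy for projective dimension; once Brodmann's theorem is granted, both are in hand and only the routine homological bookkeeping above remains. (If $R$ has infinite Krull dimension one phrases the degree bound locally, which is harmless since every assertion is checked one maximal ideal at a time.)
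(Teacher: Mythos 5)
The paper does not actually prove this statement; it records it as a consequence of Brodmann's Theorem 2(i), which is the eventual constancy of $\operatorname{depth}(R/I^{n})$ over a noetherian \emph{local} ring. Your skeleton for the local case --- Brodmann's depth stability combined with Auslander--Buchsbaum when the projective dimensions are finite --- is the standard route. However, the finite-versus-infinite dichotomy that you need before Auslander--Buchsbaum can be invoked is \emph{not} part of Brodmann's theorem, which says nothing about Tor modules or projective dimension; your parenthetical Rees-algebra justification is essentially Kodiyalam's argument and can be made to work, but as written it is imprecise. The natural maps $R/I^{n+1}\twoheadrightarrow R/I^{n}$ make $\left\{\Tor_{i}^{R}(R/I^{n},k)\right\}_{n}$ an inverse system, not a graded $\mathcal{R}(I)$-module; one should instead use $\Tor_{i}^{R}(R/I^{n},k)\cong\Tor_{i-1}^{R}(I^{n},k)$, prove finite generation of $\bigoplus_{n}\Tor_{i-1}^{R}(I^{n},k)$ over the fiber cone, and test the single degree $i_{0}=\operatorname{depth}R_{\mathfrak{m}}+1$ to extract one $n_{0}$. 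Once that stabilization is in hand for all $i\le i_{0}$, the local theorem follows outright, so the Brodmann-plus-Auslander--Buchsbaum step becomes redundant.

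The genuine gap is the globalization. You assert that Brodmann's theorem ``furnishes a single $n_{0}$ that works for all maximal ideals simultaneously,'' and that this uniformity is why the theorem is stated for arbitrary noetherian rings. It is not: Brodmann's result is proved one local ring at a time (or for grade with respect to one fixed ideal), and the stabilization index genuinely depends on the localization. When $V(I)$ contains infinitely many maximal ideals, the supremum of a family of functions, each eventually constant but with no common stabilization point, need not be eventually constant; this uniformity is therefore exactly what must be proved, and the proposal offers no argument for it. A correct global proof has to produce the stabilization over $R$ itself --- for instance via finitely generated graded modules over the Rees algebra $R[It]$ of the non-local ring --- rather than localizing first and hoping the indices agree; as written, the reduction to the local case is where your argument breaks.
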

Unfortunately, this property is special to the power filtration and is not shared by other equivalent filtrations such as the symbolic power filtrations, as demonstrated by the following recent result.
\begin{thm*}\label{trung}\cite[Theorem 6.3]{Nguyen-Trung}
    For any positive numerical function $\psi(t)$ which is periodic for $t \gg 0$, there exists a polynomial ring $R$ and a homogeneous ideal $I \subset R$ such that $\pd(R/I^{(n)}) = \psi(n) + c$ for some $c \geq 0$.
\end{thm*}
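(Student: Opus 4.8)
The plan is to realize the prescribed function by assembling $I$ as a sum of explicit squarefree monomial ideals supported on pairwise disjoint sets of variables, each contributing one elementary eventually-periodic summand. By the Auslander--Buchsbaum formula $\pd(R/I^{(n)})=\dim R-\operatorname{depth}(R/I^{(n)})$ in a polynomial ring, realizing a prescribed eventually-periodic projective-dimension function of symbolic powers is equivalent to realizing a prescribed eventually-periodic depth function; I would move between the two freely, the freedom in the additive constant $c$ corresponding to the freedom in the codimensions of the building blocks. First I would reduce: since $\psi$ is positive and $p$-periodic for $t\gg 0$, write its periodic part as a finite nonnegative integer combination $\sum_j c_j\,\mathbf 1_{C_j}$ of indicator functions of residue classes $C_j$ modulo $p$, plus a constant; the finitely many non-periodic initial values are absorbed by a further ideal on fresh variables, using the known realization of arbitrary eventually-constant functions as depth functions of \emph{ordinary} powers. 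It then suffices to realize each indicator $n\mapsto\mathbf 1_{C}(n)$ up to an additive constant and to show such contributions add.

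For the additivity, if $I_1\subset S_1=k[\mathbf x_1]$ and $I_2\subset S_2=k[\mathbf x_2]$ are squarefree monomial ideals in disjoint variables and $S=S_1\otimes_k S_2$, one has $(I_1+I_2)^{(n)}=\sum_{a+b=n}I_1^{(a)}I_2^{(b)}$ by the symbolic-powers-of-sums identity. A K\"unneth/tensor-product analysis of a free resolution of $S/(I_1+I_2)^{(n)}$ built from those of the $S_i/I_i^{(a)}$ then yields
\[
\pd\!\big(S/(I_1+I_2)^{(n)}\big)=\max_{a+b=n}\big(\pd(S_1/I_1^{(a)})+\pd(S_2/I_2^{(b)})\big)+\varepsilon ,
\]
with a controlled correction $\varepsilon\in\{0,1\}$ coming from the short exact sequences relating the summands $I_1^{(a)}I_2^{(b)}$. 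Choosing the building blocks so that their individual $\pd$ functions are comparable (constant outside a single residue class, one larger inside it) forces this maximum to be attained ``on the diagonal'' and collapses the formula to exact additivity of the contributions.

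The heart of the argument is the construction of the building block. For a fixed class $C=\{n:n\equiv r\ (\mathrm{mod}\ p)\}$ I would produce a squarefree monomial ideal $I_C$ such that $R/I_C^{(n)}$ is Cohen--Macaulay when $n\notin C$ and has depth exactly one less when $n\in C$, so that $\pd(R/I_C^{(n)})=\operatorname{codim}(I_C)+\mathbf 1_C(n)$. This is where Takayama's formula enters: it expresses $\dim_k H^i_{\mathfrak m}(R/I_C^{(n)})_a$, for $a\in\mathbb Z^d$, through the reduced simplicial homology of complexes $\Delta_a$ whose facets are governed by ceiling functions $\lceil\langle-,a\rangle/n\rceil$ attached to the minimal primes of $I_C$; one engineers those minimal primes (a small star-configuration-type arrangement, or a suitable vertex-cover setup) so that the pertinent reduced homology group is nonzero precisely when $n$ lies in the arithmetic progression $C$ and vanishes otherwise in all homological degrees below the expected depth.

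Finally, take $I=\sum_j I_{C_j}$ with $c_j$ parallel copies on disjoint variables and one extra complete intersection to fix the base constant; the additivity of the previous paragraph gives $\pd(R/I^{(n)})=\sum_j c_j\,\mathbf 1_{C_j}(n)+(\text{sum of codimensions})=\psi(n)+c$ on the periodic range, and the initial-segment adjustment completes the count for all $n$. The step I expect to be the main obstacle is the building-block construction: pinning down ideals whose symbolic powers have Cohen--Macaulay defect switching on \emph{exactly} a prescribed residue class modulo $p$, with period precisely $p$ and no spurious defect elsewhere, is a delicate combinatorial computation through Takayama's local cohomology formula, and it is where essentially all of the real work in \cite{Nguyen-Trung} is concentrated; by comparison, the additivity under disjoint sums is largely formal once the symbolic-powers-of-sums identity is in hand.
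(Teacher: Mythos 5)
First, a point of order: the paper does not prove this statement. It is quoted verbatim as \cite[Theorem 6.3]{Nguyen-Trung} and used only as a black box to produce examples of efpd ideals whose symbolic-power filtrations have non-convergent projective dimension. So there is no internal proof to compare yours against; what follows assesses your sketch on its own terms and against the strategy of the cited source.

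Your architecture does match the actual proof in \cite{Nguyen-Trung}: reduce via Auslander--Buchsbaum to realizing an eventually periodic depth function, decompose the periodic part into indicators of residue classes, build one monomial ideal per class whose symbolic powers have a depth defect exactly on that class (verified through Takayama's local cohomology formula), and assemble the blocks on disjoint variables using the binomial expansion $(I_1+I_2)^{(n)}=\sum_{a+b=n}I_1^{(a)}I_2^{(b)}$ together with a depth formula for such sums. However, as a proof it has two genuine gaps, both of which you flag but neither of which you close. The additivity step is not formal: the correct statement is a $\min$ over $a+b=n$ \emph{and} $a+b=n+1$ of depth sums with a $+1$ shift on the off-diagonal terms (H\`a--Nguyen--Trung--Trung), and extracting exact additivity of $\pd$ from it requires verifying that your blocks' depth functions really do force the minimum onto the diagonal; the assertion ``$\max + \varepsilon$ with $\varepsilon\in\{0,1\}$, collapsing by comparability'' is a placeholder for a computation you have not done. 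More seriously, the building block --- a squarefree monomial ideal whose symbolic powers are Cohen--Macaulay off a prescribed residue class mod $p$ and have depth defect exactly one on it, with no spurious defects --- is the entire content of the theorem, and you only describe the desiderata rather than exhibit the ideal or run Takayama's formula on it. As written, this is an accurate roadmap of the known proof, not a proof.
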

Let $R$ be the polynomial ring, and $I$ the ideal obtained from the conclusion of the above theorem when $\psi$ be a periodic function having period exactly $2$. Clearly $I$ has efpd, but considering the filtrations $\{J_n = I^{(n)}\}$, $\{J_n' = I^{(2n)}\}$ and $\{J_n'' = I^{(2n+1)}\}$, all of which are equivalent to $\{ I^n \}$, we see that $\lim_{n \rightarrow \infty} \pd(R/J_n)$ does not exist, while $\lim_{n \rightarrow \infty} \pd(R/J_n')$ and $\lim_{n \rightarrow \infty} \pd(R/J_n'')$ exist but converge to different values. These examples show that :
    \begin{enumerate}
        \item $I$ has efpd $\not\Rightarrow \quad \lim_{n \rightarrow \infty} \pd(R/J_n)$ exists for any filtration equivalent to $\{ I^n \}$.
        \item $I$ has efpd $\not\Rightarrow \quad \lim_{n \rightarrow \infty} \pd(R/J_n) = \lim_{n \rightarrow \infty} \pd(R/J_n')$ for filtrations $\{ J_n \}$ and $\{ J_n' \}$ equivalent to $\{ I^n \}$ such that both limits exist.
    \end{enumerate}
The following recent result also shows that the power filtration may not yield information about whether an ideal has efpd.
\begin{lemma*}\label{tony}\cite[Lemma 3.1]{ganesh-tony}
    Let $(R,\m)$ be a Cohen-Macaulay local ring of dimension one. Let $I$ be an $\m$-primary and non-principal ideal, then $\pd(R/I^n) = \infty$ for all $n \geq 1$.
\end{lemma*}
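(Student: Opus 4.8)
The plan is to show that finiteness of $\pd(R/I^n)$ would force $I^n$ to be principal, which in turn would force $I$ itself to be principal, contradicting the hypothesis.

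First, the reduction to $I^n$ being principal. Suppose $\pd_R(R/I^n)<\infty$ for some $n\ge 1$. Since $R$ is Cohen--Macaulay of dimension one, $\operatorname{depth}R=1$; and since $I$, hence $I^n$, is $\m$-primary, $R/I^n$ is a nonzero module of finite length, so $\operatorname{depth}(R/I^n)=0$. By the Auslander--Buchsbaum formula, $\pd(R/I^n)=1$, so the minimal free resolution of the cyclic module $R/I^n$ has the shape $0\to F_1\to R\to R/I^n\to 0$ with $F_1\cong I^n$ free. As $R$ is Cohen--Macaulay of positive dimension and $I^n$ is $\m$-primary, $I^n$ contains a nonzerodivisor, so $F_1\cong I^n\neq 0$; localizing the inclusion $F_1\hookrightarrow R$ at a minimal prime of $R$ (over which $R$ becomes a nonzero Artinian ring) and comparing lengths forces $\operatorname{rank}F_1=1$. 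Hence $I^n\cong R$, i.e.\ $I^n=(x)$ for a nonzerodivisor $x$.

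The key step is then to observe that $I$ is an invertible fractional ideal. Inside the total quotient ring $Q(R)$ put $J=x^{-1}I^{n-1}$, a finitely generated $R$-submodule of $Q(R)$; then $IJ=x^{-1}I^{n}=x^{-1}(x)=R$. Write $1=\sum_{i=1}^{k}a_ib_i$ with $a_i\in I$ and $b_i\in J$ (each $a_ib_i\in IJ=R$). Since $(R,\m)$ is local, some $a_ib_i$, say $a_1b_1$, is a unit of $R$. Then for every $z\in I$ we have $b_1z\in JI=R$ and $z=a_1\,(b_1z)(a_1b_1)^{-1}\in a_1R$; together with $a_1\in I$ this gives $I=a_1R$, so $I$ is principal --- a contradiction. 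Therefore $\pd(R/I^n)=\infty$ for all $n\ge 1$.

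This argument has no serious obstacle; the two places needing mild care are (a) extracting rank and principality information over a possibly non-reduced, non-domain ring $R$, which is handled by localizing at minimal primes and by working in $Q(R)$, and (b) the observation that is really the crux of the matter: $I^n$ principal makes $I$ an invertible ideal, and an invertible ideal over a local ring is principal. (Alternatively one may phrase the last step as: invertible $\Rightarrow$ finitely generated projective $\Rightarrow$ free, since $R$ is local.)
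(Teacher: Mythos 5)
Your argument is correct. Note that the paper does not prove this statement at all --- it is quoted verbatim from \cite[Lemma 3.1]{ganesh-tony} --- so there is no internal proof to compare against; your write-up is the standard argument and it is complete. The two pivots are exactly right: Auslander--Buchsbaum plus $\operatorname{depth}(R/I^n)=0$ forces $\pd(R/I^n)=1$, so $I^n$ is free, and the localization at a minimal prime (legitimate since $I^n\not\subseteq\cp$ for $\cp$ minimal, as $\sqrt{I^n}=\m\neq\cp$) pins the rank at one; then $I^n=(x)$ with $x$ a nonzerodivisor makes $I$ invertible in the total quotient ring, and an invertible ideal over a local ring is principal, contradicting the hypothesis. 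The only point worth making explicit is why $I^n$ contains a nonzerodivisor: since $R$ is Cohen--Macaulay of dimension one, $\m\notin\Ass(R)$, so the $\m$-primary ideal $I^n$ avoids the union of the (minimal) associated primes by prime avoidance. With that said, every step checks out.
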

Since $R$ is a Cohen-Macaulay local ring of dimension one, there exists an $\m$-primary ideal $J$ generated by an $R$-regular element. Therefore, $\{J^n\}_{n \gg 0}$ gives a filtration of $I$ equivalent to the $\{I^n\}$-filtration, consisting of projective modules, and hence $I$ has efpd.

To summarize, it appears that characterizing when an ideal has efpd through a filtration-independent numerical invariant based on their eventual behaviour is not possible.

\section*{Acknowledgments}
We are thankful to Srikanth Iyengar who suggested looking at the Artin-Rees property and vanishing of the maps on Tor modules as in Proposition \ref{andre-homology}, in the context of Section \ref{section main thm} and also for suggesting various improvements after kindly going through a first draft of this article. The first suggestion in particular  greatly decreased the length of the original proof of \ref{chainmapexists} and opened new directions for us to look into. We are thankful to ICTS, and particularly the program ICTS/dta2023/05, which gave us the freedom to work on this article without distractions, and the opportunity and space to meet and discuss with Srikanth. We are also thankful to V. Srinivas who suggested Example \ref{efpd example}(\ref{Dutta Hochster Mclaughlin eg}). 

The second named author would like to thank J.K.Verma for useful discussions related to Example \ref{equivalent filtration eg}(ii) and Billy Sanders for many conversations related to the contents of this article including the role of the Tate resolution from a decade ago.

The first named author would like to acknowledge the support from the Prime Minister's Research Fellowship (PMRF) scheme for carrying out this research work.

\end{document}